\renewcommand{\section}{\@startsection
	{section}{1}{0mm}{5mm}{2mm}{\raggedright\bfseries}}
\theoremstyle{plain} 
\newtheorem{theorem}{\indent\sc Theorem}[section] 
\newtheorem{lemma}[theorem]{\indent\sc Lemma}
\newtheorem{corollary}[theorem]{\indent\sc Corollary}
\newtheorem{proposition}[theorem]{\indent\sc Proposition}
\theoremstyle{definition} 
\newtheorem{definition}[theorem]{\indent\sc Definition}
\newtheorem{remark}[theorem]{\indent\sc Remark}
\newtheorem{example}[theorem]{\indent\sc Example}
\newcommand\rmB{{\mathrm B}}
\newcommand\rmH{{\mathrm H}}
\newcommand\rmO{{\mathrm O}}
\newcommand\rmR{{\mathrm R}}
\newcommand\rmf{{\mathrm f}}
\newcommand\bfA{{\mathbf A}}
\newcommand\bfC{{\mathbf C}}
\newcommand\bfP{{\mathbf P}}
\newcommand\bfQ{{\mathbf Q}}
\newcommand\bfR{{\mathbf R}}
\newcommand\bfZ{{\mathbf Z}}
\newcommand\bbD{{\mathbb D}}
\newcommand\calA{{\mathcal A}}
\newcommand\calB{{\mathcal B}}
\newcommand\calC{{\mathcal C}}
\newcommand\calE{{\mathcal E}}
\newcommand\calF{{\mathcal F}}
\newcommand\calH{{\mathcal H}}
\newcommand\calK{{\mathcal K}}
\newcommand\calL{{\mathcal L}}
\newcommand\calO{{\mathcal O}}
\newcommand\calR{{\mathcal R}}
\newcommand\calV{{\mathcal V}}
\newcommand\calW{{\mathcal W}}
\newcommand\frakH{{\mathfrak H}}
\newcommand\bo{\boldsymbol o}
\def\B+{\mathop{{\mathbf B}^+_{\mathbf{dR}}}\nolimits}
\def\D+{\mathop{D^0_{\mathbf{dR}}}\nolimits}
\def\RepCrys_F{\mathop{\mathbf{Rep}_{\Q_\ell}^{\mathbf{crys}}(G_F)}\nolimits}
\newcommand\End{\mathrm{End}}
\newcommand\Hom{\mathrm{Hom}}
\newcommand\Mor{\mathrm{Mor}}
\newcommand\Ker{\mathrm{Ker}}
\newcommand\Ext{\mathrm{Ext}}
\def\Vec{\mathop{\mathrm{Vec}}\nolimits}
\newcommand\GL{\mathbf{GL}}
\newcommand\SL{\mathbf{SL}}
\newcommand\Coker{\mathrm{Coker}}
\newcommand\dR{{\mathrm{dR}}}
\newcommand\reg{{\mathrm{reg}}}
\newcommand\Gal{\mathrm{Gal}}
\def\IRep_F{\mathop{\mathrm{Ind}\mathbf{Rep}_{\Q_p}^{\mathbf{crys}}(G_F)}\nolimits}
\newcommand\spec{\mathrm{Spec}}
\newcommand\Obj{{\mathrm{Obj}}}
\def\o+{{\oplus}}
\def\bo+{{\bigoplus}}
\newcommand\op{{\mathrm{op}}}
\newcommand\an{{\mathrm{an}}}
\newcommand\la{\langle}
\newcommand\ra{\rangle}
\newcommand\pr{{\mathrm{pr}}}
\newcommand\modulo{{\ \mathrm{mod}\ }}
\newcommand\Cusp{{\mathrm{Cusp}}}
\def\b0{{\boldsymbol 0}}
\def\top{\mathrm{top}}
\newcommand\ontomap{\twoheadrightarrow}
\newcommand\intomap{\hookrightarrow}
\newcommand\isom{\xrightarrow{\ \sim\ }}
\newcommand{\mapfor}[2]{\quad \text{for } #1 \in #2}
\begin{document}
\title[The infinite Frobenius action on de Rham fundamental groups]
{On the infinite Frobenius action on de Rham fundamental groups of affine curves}
\author{Kenji Sakugawa}

\address{Kenji Sakugawa: \\
	Faculty of Education, Shinshu University, 6-Ro, Nishi-nagano, Nagano 380-8544, Japan.}
\email{sakugawa\_kenji@shinshu-u.ac.jp}
\maketitle

\begin{abstract}We study the action of the infinite Frobenius on the de Rham fundamental groups
	of affine curves defined over $\bfR$.
	As an application, we compute extension classes of real mixed Hodge structures associated with the motivic fundamental groups of affine curves.
	In the case of modular curves, we relate our computation to special values of Rankin-Selberg L-functions, and show that the associated extensions of mixed Hodge structures are non-split.
	We compute local zeta integrals both at good primes and, in certain cases, at bad primes.
\end{abstract}

\section{Introduction}\label{intro}
Let $X$ be an algebraic variety over a subfield of $\bfC$, and let $\pi_1(X)$ denote its topological fundamental group.
Let $\bfZ[\pi_1(X)]$ be the group ring of $\pi_1(X)$, and let $J$ be its augmentation ideal.
By the mid-1980s, Morgan (\cite{Morgan}) and  Hain (\cite{Hain87b}) constructed mixed Hodge structures on the truncated
group ring $\bfZ[\pi_1(X)]/J^{n+1}$ of the fundamental group $\pi_1(X)$, using Sullivan's minimal models and Chen's iterated integrals, respectively.
The aim of this paper is to compute extension classes of Hain's mixed Hodge structure
on truncated group rings of fundamental groups of affine curves in the case $n=2$.

Let $Y$ be an affine curve over $\bfR$, and let $b$ be an $\bfR$-rational base point.
Let $\pi_1(Y(\bfC),b)$ be the topological fundamental group of $Y(\bfC)$, and
let $I_b$ denote the augmentation ideal of $\bfZ[\pi_1(Y(\bfC),b)]$.
There is a natural isomorphism
\begin{equation}
	\label{eq0}
	I_b^{n}/I_b^{n+1}\cong\rmH_1(Y(\bfC),\bfZ)^{\otimes n}
\end{equation}
of abelian groups.
For each positive integer $n$, Hain constructed a natural mixed Hodge structure on $I_b/I_b^{n+1}$ for a general variety $Y$, as is precisely described in~\cite{HainZucker}.
This mixed Hodge structure has the property that the natural homomorphism
$I_b/I_b^{n+1}\ontomap I_b/I_b^n$ is a morphism of mixed Hodge structures,
and that the natural isomorphism (\ref{eq0}) is an isomorphism of mixed Hodge structures.
In particular, when $n=2$, we obtain a short exact sequence
\begin{equation}
	\label{eq-1}
	0\to \rmH_1(Y(\bfC),\bfZ)^{\otimes 2}\to I_b/I_b^{\otimes 3}\to \rmH_1(Y(\bfC),\bfZ)\to 0
\end{equation}
of mixed Hodge structures.
It is well known that this mixed Hodge structure carries the so-called infinite
Frobenius $\varphi_{\infty}$, induced by the complex conjugation on $Y(\bfC)$.
We write $\mathcal{MH}_A^+$ as the category of $A$-mixed Hodge structures with infinite
Frobenius. The main results of this paper are Theorem~\ref{thm3-0}, Theorem~\ref{thm3-1}, Theorem~\ref{thm3-2}, and Theorem~\ref{thm3-3}, which compute the matrix coefficients of $\varphi_{\infty}$ with respect to the \emph{de Rham real structure} in terms of inner products of differential forms on $Y(\bfC)$. That is, they give an explicit description of the action of $\varphi_{\infty}$ on the extension (\ref{eq0}). Those results determine the extension class (\ref{eq-1}) in $\mathcal{MH}_{\bfR}^+$ in a partial but explicit way (Theorem~\ref{thm5-1} and Theorem~\ref{thm5-2}).

In Section 6, we specialize to the case where $Y$ is a modular curve. We relate inner products appearing in Theorem~\ref{thm4-2} and Theorem~\ref{thm4-3} to special values of Rankin-Selberg L-functions.
For simplicity, we illustrate our results in this introduction by focusing on the case $Y=Y_0(p)$, where $p$ is a prime number.
In this case, the extension class (\ref{eq-1}) in the category $\mathcal{MH}_{\bfR}^+$, that is denoted by $[I_b/I_b^3]$, is partially determined by the image of the homomorphism
\[
\reg_{\bfR}\colon \Ext^1_{\mathcal{MH}_{\bfZ}^+}(\rmH_1(Y(\bfC),\bfZ)^{\otimes 2},\rmH_1(Y(\bfC),\bfZ))\to \bigoplus_{f,g}\bfR e_{f,g},
\]
which is defined in Subsection~\ref{MHSwithfrob}.
Here $f$ and $g$ range over normalized cuspidal Hecke eigenforms of weight two and level $\Gamma_0(p)$.
The unnormalized Petersson inner product of $f$ and $g$ is denoted by $(f,g)$,
and $\epsilon_f$ denotes the sign of the functional equation of $f$.
Let $L(s,\pi_f\times \pi_g)$ be a Rankin-Selberg L-function associated with the automorphic representations $\pi_f$ and $\pi_g$ defined in (\ref{eq40}). This L-function is a holomorphic function on $\bfC$ if $f\neq g$ and has a simple pole at $s=1$ if $f=g$.
Note that our definition differs slightly from that of Jacquet in~\cite{Ja}.
See Remark~\ref{rem6.0}.
\begin{theorem}\label{thm1}When $b$ lies over an $\bfR$-rational point of $Y_0(p)$,
	then we have that
	\begin{multline}
		\reg_{\bfR}([I_b/I_b^3])=-2\pi\sqrt{-1}\sum_{f\neq g,\ \epsilon_f\epsilon_g=-1}\frac{L(1,\pi_f\times \pi_g)}{\zeta^{(p)}(2)(4\pi)^2(f,f)(g,g)}e_{f,g}\\
		-2\pi\sqrt{-1}\sum_f \left(\frac{\mathrm{Res}_{s=1}(L(s,\pi_f\times \pi_f))\log p}{\zeta^{(p)}(2)(4\pi)^2(f,f)^2}-\frac{\calE_0(b,1)}{4\pi(f,f)}\right)e_{f,f}.
	\end{multline}
	Here, $\zeta^{(p)}(s)=\prod_{\ell\neq p}(1-\ell^{-s})^{-1}$, and $\calE_0(\tau,s)$ denotes the Eisenstein series of weight zero defined in Subsection~\ref{eis}.
	
	If $b$ is the standard tangential base point $\partial/\partial q$ $($\cite[Subsection 4.1]{Brown17}$)$,
	then the following equation holds$:$
	\begin{multline}
		\reg_{\bfR}([I_b/I_b^3])=-2\pi\sqrt{-1}\sum_{f\neq g,\ \epsilon_f\epsilon_g=-1}\frac{L(1,\pi_f\times \pi_g)}{\zeta^{(p)}(2)(4\pi)^2(f,f)(g,g)}e_{f,g}\\
		-2\pi\sqrt{-1}\sum_f \frac{\mathrm{Res}_{s=1}(L(s,\pi_f\times \pi_f))\log p}{\zeta^{(p)}(2)(4\pi)^2(f,f)^2}e_{f,f}.
	\end{multline}
	In particular, the exact sequence (\ref{eq-1}) does \emph{not} split in $\mathcal{MH}_{\bfR}^+$ when $b=\partial/\partial q$.
\end{theorem}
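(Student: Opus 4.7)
The proof will proceed in three stages: first, invoke the general extension-class formulas of Theorems~\ref{thm5-1}--\ref{thm5-2}, which themselves rest on the matrix coefficients of $\varphi_\infty$ computed in Theorems~\ref{thm3-0}--\ref{thm3-3}; second, specialize to modular forms via Theorems~\ref{thm4-2}--\ref{thm4-3}; third, apply Rankin-Selberg unfolding to identify the resulting integrals with L-values. The setup is a basis of $\rmH^1_{\dR}(Y_0(p),\bfR)$ adapted to both the Hecke action and to $\varphi_\infty$: for each normalized Hecke eigenform $f$ of weight two on $\Gamma_0(p)$, take $\omega_f = f(\tau)d\tau \in \rmH^{1,0}$. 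Since $\varphi_\infty$ is induced by $\tau\mapsto -\overline{\tau}$, it acts on $\omega_f$ by the sign $\epsilon_f$ of the functional equation, and the bilinear form expressing $\reg_{\bfR}$ on the pair $(f,g)$ vanishes unless $\epsilon_f\epsilon_g = -1$. This is the origin of the selection rule in the stated sum.

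For fixed $(f,g)$ satisfying this selection rule, Theorems~\ref{thm5-1}--\ref{thm5-2} combined with Theorems~\ref{thm4-2}--\ref{thm4-3} reduce the coefficient of $e_{f,g}$ in $\reg_{\bfR}([I_b/I_b^3])$ to the Petersson-normalized triple product of $f$, $\overline{g}$, and the real-analytic Eisenstein series $\calE_0(\tau, s)$ as $s\to 1$. The standard Rankin-Selberg unfolding on $\Gamma_0(p)\backslash\bfH$ identifies this integral, for $\mathrm{Re}(s)>1$, with $\Gamma(s+1)(4\pi)^{-(s+1)}L(s,\pi_f\times\pi_g)/\zeta^{(p)}(2s)$ up to elementary factors; the $(4\pi)^2$ and $\zeta^{(p)}(2)$ in the denominator of the theorem come from this formula evaluated at $s=1$, and the normalization of Remark~\ref{rem6.0} reconciles the Jacquet convention with the classical convolution. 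This produces the first main term whenever $f\neq g$.

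When $f=g$, both $\calE_0(\tau,s)$ and $L(s,\pi_f\times\pi_f)$ have simple poles at $s=1$, so one must work with their Laurent expansions. The residue of $\calE_0(\tau,s)$ at $s=1$ is a constant multiple of $1/\mathrm{vol}(Y_0(p))$, and comparing the level-$p$ Kronecker limit formula with the level-$1$ one contributes the factor $\log p$ (equivalently, it arises from the local Hecke factor at $p$). Combined with $\mathrm{Res}_{s=1}L(s,\pi_f\times\pi_f)$, this yields the polar term $\mathrm{Res}_{s=1}(L(s,\pi_f\times\pi_f))\log p/(\zeta^{(p)}(2)(4\pi)^2(f,f)^2)$. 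The regular part is evaluated via Theorems~\ref{thm5-1}--\ref{thm5-2} at the base point: for an $\bfR$-rational $b$ it is $\calE_0(b,1)/4\pi(f,f)$, while for the tangential base point $b=\partial/\partial q$ the quantity $\calE_0(\partial/\partial q, 1)$ equals the constant term of $\calE_0$ along $\partial/\partial q$ at the cusp $\infty$, which vanishes by the construction of $\calE_0$; this produces the second, cleaner formula. The non-splitting at $b=\partial/\partial q$ is then immediate: $\mathrm{Res}_{s=1}L(s,\pi_f\times\pi_f)$ is a positive multiple of $(f,f)$ and $\log p>0$, so every diagonal coefficient in the stated expression for $\reg_{\bfR}([I_b/I_b^3])$ is nonzero.

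The main technical obstacle will be the Laurent-expansion analysis in the diagonal case: correctly separating the polar and regular parts of both $\calE_0(\tau,s)$ and $L(s,\pi_f\times\pi_f)$, pinning down the $\log p$ coefficient via the level-$p$ Kronecker limit formula, and interpreting the finite part at a tangential base point as the boundary value $\calE_0(b,1)$. The Rankin-Selberg computation for $f\neq g$ is comparatively routine once the basis is fixed and the Jacquet-type normalizations are reconciled.
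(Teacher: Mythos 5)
Your skeleton (Theorems~\ref{thm4-2}--\ref{thm4-3} and \ref{thm5-1}--\ref{thm5-2} plus Rankin--Selberg unfolding) is the right starting point, but the proposal is missing the ingredient that actually produces the stated formula, and the mechanisms you substitute for it are incorrect. In the paper, Theorem~\ref{thm1} follows from Theorem~\ref{thm5-1} \emph{combined with the bad-prime local zeta integral computation} (Theorem~\ref{thm5-3} and the preceding propositions, based on Schmidt's tables of local newforms): for $N=p$ the unique cusp $c\neq\infty$ is near $0$, and the local integral over the cell $0_p$ satisfies $I(s,0_p)=\epsilon(\tfrac12,\pi_{f,p})\epsilon(\tfrac12,\pi_{g,p})p^{\,s-1}I(s,\infty_p)$, so that $I_p(s;f,g;0)$ carries the factor $1-\epsilon(\tfrac12,\pi_{f,p})\epsilon(\tfrac12,\pi_{g,p})p^{\,s-1}$. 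Since for weight-two level-$p$ newforms $\epsilon_f\epsilon_g=\epsilon(\tfrac12,\pi_{f,p})\epsilon(\tfrac12,\pi_{g,p})$, this factor at $s=1$ is $1-\epsilon_f\epsilon_g$: that is the sole source of the selection rule, and its $s$-derivative is the sole source of $\log p$ in the diagonal term (the pole of $L^{(p)}(s,\pi_f\times\pi_f)$ is cancelled by the zero of this local factor, leaving $\mathrm{Res}_{s=1}L\cdot\log p$). Your proposal attributes the selection rule to $\varphi_\infty$ acting on $\omega_f$ by $\epsilon_f$; this is false --- in the paper's normalization $\varphi_\infty^{(1)}$ sends $[\omega_f]$ to $[\overline{\omega_f}]$, complex conjugation interchanges holomorphic and antiholomorphic classes rather than acting by the root number, and indeed the general formula of Theorem~\ref{thm5-1} contains \emph{all} pairs $f\neq g$ with no sign condition. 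Likewise your derivation of $\log p$ from a comparison of Kronecker limit formulas at levels $p$ and $1$ is not what happens and gives you no route to the epsilon factors; the introduction explicitly stresses that the non-splitting statement requires the local zeta integrals at the bad prime, which your plan never computes.

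A secondary misdirection: the Eisenstein series $\calE_c(\tau,s)$ of Subsection~\ref{eis} is built from $\phi_c=\mathrm{char}_\infty-\#(\cdot)^{-1}\mathrm{char}_{[c]}$ precisely so that it is regular at $s=1$, so there is no ``polar part of $\calE_0(\tau,s)$'' to separate; the Laurent analysis in the diagonal case concerns the product $L^{(p)}(s,\pi_f\times\pi_f)\,I_p(s;f,f;0)$, where $I_p$ vanishes at $s=1$ because the square of the local epsilon factor is $1$. You also omit the observation (needed for the statement as formulated) that for $N=p$ the Eisenstein part $V$ is one-dimensional, so $\reg_3([I_b/I_b^3])$ vanishes and only the $\reg_2$-component survives. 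The remaining items in your outline --- the $\zeta^{(p)}(2)$ and $(4\pi)^2$ normalizations, the disappearance of $\calE_0(b,1)$ for $b=\partial/\partial q$ via the normalization (\ref{eq18}), and the positivity argument for non-splitting --- are consistent with the paper, but without the nonarchimedean computation at $p$ the proof as proposed cannot yield either displayed formula.
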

We would like to emphasize that in order to prove the non-triviality of the extension class defined by (\ref{eq-1}), we need to compute the local zeta integrals at not only good primes but also at \emph{bad primes}.

Note that the L-function $L(s,\pi_f\times \pi_g)$ is \emph{non-critical} at $s=1$.
In~\cite{DRS15}, Darmon, Rotger, and Sols showed that a part of the extension class (\ref{eq-1}) in $\mathcal{MH}_{\bfQ}^+$
can be described in terms of central critical values of the triple product L-functions, in the case where $Y$ is a modular curve or a Shimura curve. Theorem~\ref{thm1} may thus be viewed as complementary to their result.

Our method for computing the infinite Frobenius action extends naturally to the \emph{relatively unipotent} case.
In~\cite{Brown17}, Brown studied mixed modular motives (MMM), a subcategory of ``mixed motives'' generated by the relative pro-unipotent completion of $\SL_2(\bfZ)$. The techniques developed in this paper appear to be applicable to the construction of Rankin--Selberg-type generators for the fundamental Lie algebra of MMM.

The organization of this paper is as follows.
In Section 2, we recall the mixed Hodge structures on $I_b/I_b^{n+1}$ constructed by Hain (\cite{Hain87b}) and Hain--Zucker (\cite{HainZucker}).
In Section 3, we give a generalization of the Bloch--Wigner function (Definition~\ref{defofBWfct}), which is, by definition, a single-valued function on $Y(\bfC)$. Although its appearance differs from the classical Bloch--Wigner function, we show that it indeed generalizes the classical one. This construction is inspired by ideas of Brown~\cite[18.4]{Brown17}. The main results of this paper are presented in Section 4, where we compute the matrix coefficients of the infinite Frobenius on $I_b/I_b^{3}$
in terms of inner products of smooth one-forms on $Y(\bfC)$. Brown obtained a similar result in
\cite[Section 9]{Brown17} by using a generalization of Haberland formula (cf.\ ~\cite{PP}), but our method differs from his.
In Section 5, we give regulator formulas based on the calculations in Section 4.
In Section 6, we focus on the modular curve $Y_0(N)$.
We first recall how the inner products in Sections 4 and 5 can be related to certain zeta integrals of automorphic forms on $\GL_2(\bfA)$.
Then, we compute such local zeta integrals not only at good primes but also at bad primes
in certain cases (Theorem~\ref{thm5-3}).
\subsection*{Notation}
For a field $k$ and a $k$-vector space $V$, the symbol $V^\vee$ denotes
the $k$-dual of $V$. The canonical pairing between $V$ and $V^\vee$
is denoted by
\[
\la\ ,\ \ra\colon V\times V^\vee\to k.
\]
The category of finite-dimensional $k$-vector spaces is denoted by $\Vec_k$.

Let $M$ be a topological space and let $\gamma,\delta\colon [0,1]\to M$
be a composable paths. In this paper, we use algebraists' convention of path compositions, namely,
$\gamma\delta$ is defined to be the path first through $\delta$ and then through $\gamma$.

Let $\epsilon$ and $\epsilon'$ be $+$ or $-$. Then, the sign $\epsilon \epsilon'\in\{+,-\}$
is defined by the equation $++=--=+,\ +-=-+=-$. 
\section{Mixed Hodge structures on fundamental groups of algebraic varieties}\label{MHS}
Let $F$ be a subfield of $\bfC$, and let $Y$ be a smooth, geometrically connected affine curve over $F$.
We denote by $\pi^{\top}_1(Y,y)$ the topological fundamental group of $Y(\bfC)$
with the base point $y\in Y(\bfC)$.
For simplicity, we assume that $y$ lies over an $F$-rational point of $Y$.
Let $I$ denote the augmentation ideal of the group ring $\bfZ[\pi_1^{\top}(Y,y)]$. We refer to the quotients $\bfZ[\pi_1^{\top}(Y,y)]/I^{n+1}$ as the
\emph{truncated group rings of $\pi_1^{\top}(Y,y)$}.
Morgan and Hain constructed
a natural mixed Hodge structure on the (limit of) truncated group ring
(\cite{Morgan},~\cite{Hain87b}).
In this section, we recall Hain's construction of mixed Hodge structures on the truncated group rings of $ \pi_1^{\top}(Y,y)$.

Let $\calC^{\rmB}(Y)$ and $\calC^{\dR}(Y)$ denote the categories of unipotent local systems on $Y(\bfC)$
and unipotent flat connections on $Y$, respectively.
By the definition, they are unipotent Tannakian categories in the sense of the Appendix.
Note that, due to the finiteness of the Betti and de Rham cohomology groups,
they also satisfy the condition in Proposition~\ref{propa1}.
For each symbols $\bullet=\dR,\ \rmB$, we denote by  $\omega_y^{\bullet}$ the associated fiber functor from $\calC^{\bullet}(Y)$ to the base point $y$.

Let $(\calL_N,s_N)$ be the $N$th layer of the universal pro-$\omega_y^{\rmB}$-marked object of $\calC^{\rmB}(Y)$ (see Definition~\ref{dfna1}).
By definition, we have a short exact sequence
\[
0\to \rmH^1(Y(\bfC),\calL_{N-1}^\vee)^\vee\to \calL_N\to \calL_{N-1}\to 0
\]
of $\bfQ$-local systems where the connecting homomorphism
\[
\rmH^1(Y(\bfC),\calL_{N-1}^\vee)\to \Ext^1_{\calC^{\rmB}(Y)}(\calL_{N-1},\bfQ)\cong \rmH^1(Y(\bfC),\calL_{N-1}^\vee)
\]
is the identity map.
Here, $\rmH^1(Y(\bfC),\calL_{N-1}^\vee)^\vee$ denotes the constant local system associated with this $\bfQ$-vector space by abuse of notation.
Since $Y$ is an affine curve, we have that $\rmH^2(Y(\bfC), \calF)=0$ for any constructible sheaf $\calF$ on $Y(\bfC)$ (\cite[Theorem 2.5.23]{HuM17}). Therefore, we have the following natural isomorphism:
\[
\rmH^1(Y(\bfC),\calL_{N-1}^\vee)\xrightarrow{\ \sim\ }\rmH^1(Y(\bfC),\bfQ)^{\otimes (N-1)}.
\]
Note that the category $\calC^{\rmB}(Y)$ is equivalent to the category of unipotent representations
of $\pi_1^{\mathrm{top}}(Y,y)$ on finite-dimensional $\bfQ$-vector spaces.
Hence, $(\calL_N,s_N)$ is canonically isomorphic to the marked local system associated with $(\bfQ[\pi_1^{\mathrm{top}}(Y,y)]/I^{N},1)$.
Consequently, for any (possibly tangential) base point $z$, we have that
\[
\calL_{N,z}\cong \bfQ[\pi_1^{\mathrm{top}}(Y;y,z)]/\bfQ[\pi_1^{\mathrm{top}}(Y;y,z)]I^{N}.
\]
Here, $\pi_1^{\mathrm{top}}(Y;y,z)$ is the set of homotopy classes of paths from $y$ to $z$.

Let $((\calV_N,\Delta_N),t_N)$ be the $N$th layer of a universal pro-$\omega_y^{\dR}$-marked object of $\calC^{\dR}(Y)$. Similarly to the Betti case, we have a short exact sequence
\[
0\to \rmH^1_{\dR}(Y,\calV_{N-1}^\vee)^\vee\to \calV_N\to \calV_{N-1}\to 0
\]
of flat connections on $Y$ such that the induced connecting homomorphism is the identity map.
As in the Betti case, we have
\[
\rmH^1_{\dR}(Y,\calV_{N-1}^\vee)\xrightarrow{\ \sim\ }\rmH^1_{\dR}(Y/F)^{\otimes(N-1)}
\]
since $Y$ is an affine curve.

There is a natural functor
\begin{equation}
	\label{unipRH}
	\calC^{\dR}(Y)\otimes\bfC\to \calC^{\rmB}(Y)\otimes\bfC;\quad (\calV,\nabla)\mapsto \calV_{\an}^{\nabla=0}
\end{equation}
between Tannakian categories, where $\calV_{\an}$ is the analytification of $\calV$.
The unipotent version of the Riemann--Hilbert correspondence
(\cite{Hain87}) asserts that the functor (\ref{unipRH}) is an equivalence of $\bfC$-linear Tannakian categories. Hence, we have a canonical isomorphism
\[
\mathrm{comp}_{\dR,\rmB}\colon \calV_{N,\an}\xrightarrow{\ \ \sim\ \ }\calL_N\otimes_{\bfQ}\calO_{Y_{\an}}
\]
of flat connections on $Y_{\an}$, which sends $t_N$ to $s_N$.

In~\cite{HainZucker}, Hain and Zucker constructed $\calL_N$ and $\calV_N$ using reduced bar complexes.
By this method, they defined the Hodge filtration $F^\bullet\calV_{N,\an}$ and
the weight filtrations $W_\bullet\calL_N$ and $W_\bullet\calV_{N,\an}$.
\begin{theorem}[{\cite[Proposition (4.20), Proposition (6.15)]{HainZucker}}]
	The tuple \[
	((\calL_N,W_\bullet\calL_N),(\calV_{N,\an},\nabla_N,F^\bullet\calV_{N,\an},W_\bullet\calV_{N,\an}),\mathrm{comp}_{\dR,\rmB})
	\]
	forms a graded-polarizable variation of mixed Hodge structures over $Y_{\an}$.
	Moreover, this graded-polarizable variation of mixed Hodge structures is admissible in the sense of~\cite[Definition 14.48]{PP}.
\end{theorem}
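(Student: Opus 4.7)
The plan is to realize both $\calL_N$ and $\calV_N$ as cohomology of reduced bar complexes built from mixed Hodge complexes for $Y$, and then verify the VMHS axioms inductively in $N$. Concretely, fix a smooth compactification $\bar Y$ of $Y$ with $D=\bar Y\setminus Y$ a finite set of punctures. On the de Rham side, work with Deligne's logarithmic complex $(\Omega^\bullet_{\bar Y}(\log D),F^\bullet,W_\bullet)$; on the Betti side, work with a compatible real mixed Hodge complex (for example, Navarro-Aznar's). Forming the reduced bar complex $\bar B(-)$ and taking bar-length truncations then produces natural models of $\calV_N^\vee$ and $\calL_N^\vee$ via Chen's iterated integral theorem, with the comparison map $\mathrm{comp}_{\dR,\rmB}$ induced by the integration pairing between iterated integrals and elements of $\bfQ[\pi_1^{\top}(Y,y)]/I^N$.

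The bar complex inherits three filtrations: a Hodge filtration from the Hodge filtration on each de Rham factor, a weight filtration combining bar length with the internal weights (with the appropriate shifts), and the bar-length filtration itself. I would place a VMHS on $\calV_N$ and $\calL_N$ by induction on $N$, using the short exact sequences displayed in the excerpt. The kernel $\rmH^1(Y,\calL_{N-1}^\vee)^\vee$ is a tensor power of $\rmH^1(Y)$, which is pure of known weight; the fact that the connecting homomorphism is the identity rigidifies the extension class and forces compatibility of the candidate filtrations with the exact sequence. Griffiths transversality on the bar model is then immediate because its combinatorial differential shifts $F^\bullet$ by exactly one, and graded-polarizability follows because $\Gr^W_\bullet\calV_N$ is a direct sum of tensor powers of $\rmH^1_{\dR}(Y)$, which carries the classical polarized VHS.

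For admissibility at each puncture $p\in D$ in the sense of~\cite[Definition 14.48]{PP}, the conditions to check are: (i) unipotent local monodromy on $\bfQ[\pi_1^{\top}(Y,y)]/I^N$, which is automatic because the local monodromies act unipotently on $I/I^2=\rmH_1(Y,\bfQ)$ and hence on all truncated group rings; (ii) existence of the relative monodromy filtration $W_\bullet(N_p;W)$ of the local monodromy logarithm $N_p$ with respect to the weight filtration; and (iii) extension of the Hodge filtration to Deligne's canonical extension on $\bar Y$. The natural tool for both (i) and (iii) is the logarithmic bar complex built from $\Omega^\bullet_{\bar Y}(\log D)$, which simultaneously provides the canonical extension of $\calV_N$ to $\bar Y$ and an explicit Hodge filtration on it, from which the limit mixed Hodge structure can be read off.

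The main obstacle is (ii): ensuring the relative monodromy filtration exists amounts to a strictness/compatibility property between the nilpotent operators $N_p$ and the bar weight filtration, and this interacts nontrivially with the tensor structure on the graded pieces, so the usual pure-Hodge argument of Schmid does not apply directly. In practice one either follows Hain--Zucker's analysis of the logarithmic bar complex step by step, or invokes the Steenbrink--Zucker--Kashiwara criteria guaranteeing admissibility of variations arising from a cohomological mixed Hodge complex with a good logarithmic extension over $\bar Y$. Either route requires a careful tangential-base-point analysis to handle the case when $y$ is replaced by a boundary tangential vector, which is the delicate technical heart of the proof.
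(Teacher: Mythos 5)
Note first that the paper does not prove this statement at all: it is quoted directly from Hain--Zucker (Propositions (4.20) and (6.15) of \cite{HainZucker}), so the only ``proof'' in the paper is the citation. Your outline reproduces the strategy of that cited source --- reduced bar constructions on logarithmic mixed Hodge complexes, Chen's theorem identifying the fibers with $\bfQ[\pi_1^{\top}(Y,y)]/I^N$, induction on $N$ through the displayed exact sequences --- so the route is the right one. But as a standalone proof attempt it has a genuine gap at exactly the point you flag yourself: admissibility. Items (i) and (iii) of your checklist are indeed unproblematic (unipotency holds because $\gamma-1$ raises the $I$-adic filtration, and the logarithmic bar complex furnishes the canonical extension together with its Hodge filtration), but the existence of the relative monodromy filtration is the substantive content of admissibility, and you do not establish it: you offer a choice between ``follow Hain--Zucker step by step'' and ``invoke the Steenbrink--Zucker--Kashiwara criteria,'' carrying out neither. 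The second option is not available off the shelf in the generality you invoke --- there is no general criterion asserting that a variation built from a cohomological mixed Hodge complex with a logarithmic extension is automatically admissible; for unipotent variations the needed input is precisely Hain--Zucker's structure theory (constancy of the weight-graded quotients plus goodness at infinity), i.e.\ the very result being cited. So your argument either collapses back to the citation the paper makes or leaves the decisive step unproven; the same applies to the tangential base point refinement, which you correctly identify as delicate but do not address.

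Two smaller inaccuracies are worth fixing. Your graded-polarizability step asserts that $\Gr^W_\bullet\calV_N$ is a direct sum of tensor powers of $\rmH^1_{\dR}(Y)$, ``which carries the classical polarized VHS''; but $\rmH^1$ of an affine curve is mixed (weights $1$ and $2$), so one must pass to its weight-graded pieces --- copies of $\rmH^1(X)$ and Tate twists spanned by puncture classes --- and, moreover, the constancy of the weight-graded quotients of $\calL_N$ is itself something to be proved in this setting, not assumed. Likewise, the unipotency of the local monodromy should be deduced from the action of $\gamma-1$ on the augmentation filtration of the truncated group ring, not merely from its action on $I/I^2$.
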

\begin{corollary}
	\label{cor1-1}Let $z$ be a base point of $Y$ which may be tangential when $Y$ is an affine smooth curve. Then there exists a natural mixed Hodge structure on the abelian group $\bfZ[\pi_1^{\mathrm{top}}(Y;y,z)]/\bfZ[\pi_1^{\mathrm{top}}(Y;y,z)]I^N$,
	where $I$ is the augmentation ideal of the group ring $\bfZ[\pi_1^{\mathrm{top}}(Y,y)]$.
\end{corollary}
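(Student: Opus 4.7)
The plan is to obtain the asserted mixed Hodge structure by evaluating the admissible graded-polarizable VMHS from the preceding theorem at $z$, taking the ordinary fiber when $z$ is an interior point, and taking a limit mixed Hodge structure when $z$ is a tangential base point at a puncture.

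For $z$ an ordinary $\bfC$-point, I would apply the Betti fiber functor $\omega_z^{\rmB}$ (together with its de Rham counterpart) to the tuple
\[
\bigl((\calL_N,W_\bullet\calL_N),(\calV_{N,\an},\nabla_N,F^\bullet\calV_{N,\an},W_\bullet\calV_{N,\an}),\mathrm{comp}_{\dR,\rmB}\bigr).
\]
Since the stalk of a graded-polarizable VMHS at a point is a mixed Hodge structure, this gives a MHS on $(\calL_{N,z},\calV_{N,z})$. Combining with the identification
\[
\calL_{N,z}\cong \bfQ[\pi_1^{\mathrm{top}}(Y;y,z)]/\bfQ[\pi_1^{\mathrm{top}}(Y;y,z)]I^{N}
\]
recalled in the previous section, one obtains the desired rational MHS. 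The integral structure comes from the fact that Hain--Zucker's bar-complex construction of $\calL_N$ is naturally defined over $\bfZ$, and its $\bfZ$-form at $z$ coincides with $\bfZ[\pi_1^{\mathrm{top}}(Y;y,z)]/\bfZ[\pi_1^{\mathrm{top}}(Y;y,z)]I^{N}$ under parallel transport along any (topological) path from $y$ to $z$.

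For a tangential base point $z$ at a puncture $P$ of $Y$, I would invoke the admissibility of the VMHS in the sense of \cite[Definition 14.48]{PP}: it supplies at each puncture a relative monodromy weight filtration and a canonical \emph{limit} mixed Hodge structure depending functorially on the choice of tangent direction. Taking this limit in the direction defining $z$, and combining with the tangential parallel-transport interpretation of $\calL_N$, equips $\bfQ[\pi_1^{\mathrm{top}}(Y;y,z)]/\bfQ[\pi_1^{\mathrm{top}}(Y;y,z)]I^{N}$ with a mixed Hodge structure, the $\bfZ$-lattice again being supplied by the integral bar complex. The main obstacle is this tangential case: one has to verify that the limit MHS produced by the general admissibility machinery agrees, as a filtered $\bfQ$-vector space, with the group-ring quotient computed via tangential paths, and that the outcome is independent of auxiliary local coordinates. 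This reduces to a local computation near $P$ matching the nilpotent orbit of $(\calV_N,\nabla_N)$ on a punctured disk with the explicit bar-complex model for $\calL_N$ based at a tangent vector, a compatibility that follows from the explicit description of the Hodge and weight filtrations on $\calV_N$ in a neighborhood of $P$ provided by the Hain--Zucker construction.
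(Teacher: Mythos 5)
Your proposal is correct and is essentially the argument the paper intends: the corollary is deduced directly from the preceding Hain--Zucker theorem by taking the fiber of the admissible graded-polarizable VMHS at $z$ (a limit mixed Hodge structure in the tangential case, which is exactly what admissibility in the sense of \cite[Definition 14.48]{PP} provides), combined with the identification $\calL_{N,z}\cong \bfQ[\pi_1^{\mathrm{top}}(Y;y,z)]/\bfQ[\pi_1^{\mathrm{top}}(Y;y,z)]I^{N}$ already recorded in Section 2, the integral lattice being the image of the group ring over $\bfZ$.
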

\begin{remark}
	In~\cite{Wojtkowiak}, Wojtkowiak constructed $\calV_N$ using certain cosimplicial scheme that serves an algebraic analogue of the path space. As an application it can be shown that the filtrations
	$F^\bullet$ and $W_\bullet$ on $\calV_{N,\an}$ descend to $\calV_N$.
\end{remark}
\section{A generalization of the Bloch--Winger function}\label{BWfuct}
In this section, we introduce a generalization of the Bloch--Wigner function following ideas of
Hain and Brown.
We use the same notation as before,
but from now on  we assume that $F$ is a subfield of $\bfR$.
The base point $y$ need not be $F$-rational.
We also fix an $F$-rational base point $b$ of $Y$, which may be tangential.

Define $(\calV_\infty,\nabla_\infty)$ to be $\varprojlim_N(\calV_N,\nabla_N)$.
The canonical extension of $(\calV_N,\nabla_N)$ is denoted by $(\overline{\calV}_N,\overline{\nabla}_N)$ (\cite[2.2]{Del97}),
and the pro-object $\varprojlim_N(\overline{\calV}_N,\overline{\nabla}_N)$ is denoted
by $(\overline{\calV}_\infty,\overline{\nabla}_\infty)$.
We fix a marking $t_\infty=(t_N)_N\in\calV_{\infty,b}$ of $\calV_{\infty}$, \emph{at $b$}.

\subsection{Hain's trivialization}\label{trivializations}
In this subsection, we recall Hain's trivialization of the pro-flat
connection $(\calV_\infty,\nabla_\infty)$.
For a positive integer $N$, define the $F$-vector space $V_N^{\dR}$ by
\[
V_N^{\dR}=\bigoplus_{i=0}^{N-1}\rmH_1^{\dR}(Y/F)^{\otimes i},
\]
where $\rmH_1^{\dR}(Y/F)$ is the $F$-dual of the algebraic de Rham cohomology group $\rmH^1_{\dR}(Y/F)$.
We equip $\rmH_1^{\dR}(Y/F)^{\otimes i}\otimes_F\bfC$ with the usual mixed Hodge structure, and define a mixed Hodge structure on $V_N^{\dR}$ as the direct sum of these mixed Hodge structures.
We regard $V_N^{\dR}$ as a quotient algebra of the tensor algebra $T(\rmH_1^{\dR}(Y/F))=\bigoplus_{i\geq 0}\rmH_1^{\dR}(Y/F)^{\otimes i}$.
The \emph{completed tensor algebra} $\widehat{T}(\rmH_1^{\dR}(Y/F))$ is defined as the projective limit
of $V_N^{\dR}$:
\[
\widehat{T}(\rmH_1^{\dR}(Y/F)):=\varprojlim_NV_N^{\dR}=\prod_{i=0}^\infty\rmH_1^{\dR}(Y/F)^{\otimes i}.
\]

Let $\calA^{p,q}_X$ denote the sheaf of smooth $(p,q)$-forms on $X(\bfC)$ and define a complex $\calA^\bullet_X(\log D)$ of sheaves on $X(\bfC)$ by
\[
\calA^\bullet_X(\log D)=\mathrm{tot}\left(\Omega_{X_{\an}}^\bullet(\log D)\otimes_{\calO_{X_\an}}\calA^{0,*}\right). 
\]
Here, $\mathrm{tot}$ denotes the total complex of the given double complex.
The Hodge and the weight filtrations on this complex is defined in the usual way.
That is, $F^\bullet$ is defined by
\[
F^i\calA^\bullet_X(\log D)=\mathrm{tot}\left(\Omega_{X_{\an}}^{\bullet\geq i}(\log D)\otimes_{\calO_{X_\an}}\calA^{0,*}\right)
\]
and $W_\bullet$ is given by
\[
0=W_{-1}\subset W_0\calA^\bullet_X(\log D)=\calA_X^\bullet\subset W_1\calA^\bullet_X(\log D)=\calA^\bullet_X(\log D).
\]
Note that we have a short exact sequence
\begin{equation}
	\label{eq3}
	0\to  W_0\calA^\bullet_X(\log D)\to  \calA^\bullet_X(\log D)\xrightarrow{\mathrm{Res}} \bigoplus_{c\in D}\bfC\frac{dq_c}{q_c}[1]\to 0,
\end{equation}
where $q_c$ is a local parameter at $c$ and $\bfC\frac{dq_c}{q_c}$ is the skyscraper sheaf supported at $c$.
It is well known that $A^\bullet(X\log D):=\Gamma(X,\calA^\bullet_X(\log D))$ forms the complex part of a mixed
Hodge complex computing the mixed Hodge structure on $\rmH^\bullet(Y(\bfC),\bfZ)$
(see~\cite[Theorem 4.2, Proposition-Definition 4.11]{PP}; cf.\ ~\cite[Theorem 8.35]{Voison}).

The completed tensor product $\widehat{T}(\rmH_1^{\dR}(Y/F))\widehat{\otimes}_FA^\bullet(X\log D)$ is defined by
\[
\widehat{T}(\rmH_1^{\dR}(Y/F))\widehat{\otimes}_FA^\bullet(X\log D)=\varprojlim_N\left(V_N^{\dR}{\otimes}_FA^\bullet(X\log D)\right).
\]
Then, for any two elements $v\otimes \omega,w\otimes \eta\in \widehat{T}(\rmH_1^{\dR}(Y/F))\widehat{\otimes}_FA^\bullet(X\log D)$,
we set
\[
(v\otimes \omega)\wedge (w\otimes \eta):=vw\otimes (\omega\wedge\eta).
\]

Hain defined a one-form $\Omega\in F^0W_{-1}\left(\widehat{T}(\rmH_1^{\dR}(Y/F))\widehat{\otimes}_FA^\bullet(X\log D)\right)$ satisfying the equation
\[
d\Omega-\Omega\wedge\Omega=0
\]
in the following inductive way (see~\cite[Subsection 7.3]{Hain15}).
First, take a closed one-form $\Omega_1\in F^0W_{-1}(\rmH_1^{\dR}(Y/F)\otimes_F A^1(X\log D))\subset V_1^{\dR}\otimes_FA^1(X\log D)$ whose cohomology class represents the identity map on $\rmH^1_{\dR}(Y/\bfC)=\rmH^1(X,\Omega^\bullet_X(\log D))$.
Next, suppose that we have an element $\Omega_N\in F^0W_{-1}(V_N^{\dR}\otimes_FA^1(X\log D))$
satisfying $d\Omega_N-\Omega_N\wedge\Omega_N=0$ in $V_N^{\dR}\otimes_FA^2(X\log D)$.
Then, in $V_{N+1}^{\dR}\otimes_FA^2(X\log D)$, this two-form has a value in $\rmH_1^{\dR}(Y)^{\otimes(N+1)}\otimes_FA^2(X\log D)$.
We can take a one-form $\Omega^{(N+1)}\in F^0W_{-1}(\rmH_1^{\dR}(Y)^{\otimes(N+1)}\otimes_FA^1(X\log D))$
such that
\[
-d\Omega^{(N+1)}-d\Omega_N+\Omega_N\wedge\Omega_N=0
\]
in $V_{N+1}^{\dR}\otimes_FA^2(X\log D)$ as $Y$ is an affine curve and by~\cite[3.2.8]{Hain87b}.
We then set $\Omega_{N+1}:=\Omega_N+\Omega^{(N+1)}$.
Finally, $\Omega$ is defined to be the inverse limit of $\Omega_N$, namely,
\[
\Omega:=\varprojlim_N\Omega_N\in \varprojlim_N F^0W_{-1}(V_N^{\dR}\otimes_FA^1(X\log D))=F^0W_{-1}\left(\widehat{T}(\rmH_1^{\dR}(Y/F))\widehat{\otimes}_FA^1(X\log D)\right).
\]
Let $\widehat{L}(\rmH_1^{\dR}(Y/F))$ denote the set of lie-like elements of $\widehat{T}(\rmH_1^{\dR}(Y/F))$ with respect to the coproduct
\[
\delta\colon \widehat{T}(\rmH_1^{\dR}(Y/F))\to\widehat{T}(\rmH_1^{\dR}(Y/F))^{\widehat{\otimes}2};\quad v\mapsto v\widehat{\otimes }v\quad \mapfor{v}{\rmH_1^{\dR}(Y/F)}.
\] 
Note that $\widehat{L}(\rmH_1^{\dR}(Y/F))$ coincides with the closure in $\widehat{T}(\rmH_1^{\dR}(Y/F))$ of the free Lie algebra generated
by a basis of $\rmH_1^{\dR}(Y/F)$.
By construction, the one-form $\Omega$ is contained in $\widehat{L}(\rmH_1^{\dR}(Y/F))\widehat{\otimes}_FA^1(X\log D)$.
\begin{example}
	\label{ex1}Let us take $Y\subset X$ to be $\bfP^1_F\setminus\{0,1,\infty\}\subset \bfP^1_F$.
	Take a tangent vector $b=\frac{d}{dt}$ at $0$.
	The symbols $e_0,e_1$ denote the basis of $\rmH_1^{\dR}(Y/\bfQ)$ dual to the basis
	$[dt/t], [dt/(t-1)]$ of $\rmH^1_{\dR}(Y/\bfQ)$.
	Then, the connection $\Omega$ can be written as follows:
	\[
	\Omega=e_0\otimes\frac{dt}{t}+e_1\otimes\frac{dt}{t-1}.
	\]
\end{example}
The connection $\nabla_\Omega$ on the pro-sheaf $\widehat{T}(\rmH_1^{\dR}(Y/F))\widehat{\otimes}_F\calA_X^0$ is defined by
\[
\nabla_\Omega=d-\Omega,
\]
and we write the
$(1,0)$-part and the $(0,1)$-part of $\nabla_\Omega$ as $\nabla^{(1,0)}_{\Omega}$ and $\nabla^{(1,0)}_{\Omega}$, respectively.
Define the pro-connection $(\widehat{\calV}_{\Omega,\mathrm{hol}},\nabla_{\Omega,\mathrm{hol}})$
by
\[
\widehat{\calV}_{\Omega,\mathrm{hol}}:=\left(\widehat{T}(\rmH_1^{\dR}(Y/F))\widehat{\otimes}_F\calA_X^0\right)^{\nabla^{(0,1)}=0},\quad \nabla_{\Omega,\mathrm{hol}}=\nabla^{(1,0)}_\Omega.
\]
\begin{theorem}{{\cite[Lemma 7.5, Theorem 7,15]{Hain15}}}
	\label{thm2-1}The flat connection $(\widehat{\calV}_{\Omega,\mathrm{hol}},\nabla_{\Omega,\mathrm{hol}})$ is a pro-holomorphic flat connection on $X_{\an}$. Moreover, this is isomorphic to the analytification of
	$(\overline{\calV}_\infty,\overline{\nabla}_\infty)$.
	This isomorphism is compatible with Hodge and weight filtrations.
\end{theorem}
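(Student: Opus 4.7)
The plan is to verify the statement layer by layer, exploiting that $\Omega \in F^0 W_{-1}(\widehat{L}(\rmH_1^{\dR}(Y/F)) \widehat{\otimes} A^1(X\log D))$ is strictly lower-triangular with respect to the weight filtration and satisfies the Maurer--Cartan equation $d\Omega = \Omega \wedge \Omega$. First, I would split $\Omega$ into its $(1,0)$ and $(0,1)$ parts and consider $\nabla^{(0,1)}_\Omega = \overline{\partial} - \Omega^{0,1}$ on the pro-smooth sheaf $\widehat{T}(\rmH_1^{\dR}(Y/F))\widehat{\otimes}_F\calA^0_X$. Because $\Omega^{0,1}\in W_{-1}$, the equation $\nabla^{(0,1)}_\Omega f = 0$ reduces at the $N$th level to $\overline{\partial} f_N = (\Omega^{0,1}\cdot f)_N$, whose right-hand side depends only on strictly lower layers. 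The $\overline{\partial}$-Poincar\'e lemma on small polydiscs (including those centered at points of $D$), together with the log-pole behaviour of $\Omega$, produces local holomorphic trivializations showing that $\widehat{\calV}_{\Omega,\mathrm{hol}}$ is a pro-holomorphic sheaf on $X_{\an}$. Decomposing the curvature equation by bidegree, the $(1,1)$-component then forces $\nabla_{\Omega,\mathrm{hol}} = \nabla^{(1,0)}_\Omega$ to descend to a flat holomorphic connection on $\widehat{\calV}_{\Omega,\mathrm{hol}}$ with logarithmic singularities along $D$.

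Next I would identify $(\widehat{\calV}_{\Omega,\mathrm{hol}},\nabla_{\Omega,\mathrm{hol}})$ with the analytification of $(\overline{\calV}_\infty,\overline{\nabla}_\infty)$ by induction on $N$. Truncating at layer $N+1$ yields a short exact sequence
\[
0 \to \rmH_1^{\dR}(Y/F)^{\otimes N}\otimes \calO_{X_{\an}} \to \widehat{\calV}_{\Omega,N+1,\mathrm{hol}} \to \widehat{\calV}_{\Omega,N,\mathrm{hol}} \to 0
\]
of holomorphic flat connections. By the very recipe defining $\Omega^{(N+1)}$ (chosen inductively so that its class represents the obstruction to trivialising the preceding extension), the associated connecting homomorphism agrees with the identity map on $\rmH^1_{\dR}(Y,\widehat{\calV}_{\Omega,N,\mathrm{hol}}^\vee)$, which is precisely the characterizing property of the universal $\omega_y^{\dR}$-marked pro-object in $\calC^{\dR}(Y)$ recorded in Proposition~\ref{propa1}. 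The universal property then furnishes a unique marking-preserving, connection-preserving isomorphism $\widehat{\calV}_{\Omega,\mathrm{hol}}|_{Y_{\an}} \cong \calV_{\infty,\an}$. To upgrade this to an isomorphism with $\overline{\calV}_{\infty,\an}$ across $D$, I would compare residues: since $\Omega \in W_{-1}A^1(X\log D)$ is strictly lower-triangular, its residues are nilpotent, so the extension it provides satisfies the eigenvalue condition in $[0,1)$ that characterises Deligne's canonical extension.

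For the Hodge and weight filtrations, both sides inherit bifiltered structures from the tensor-algebra grading of $\widehat{T}(\rmH_1^{\dR}(Y/F))$ together with $F^\bullet$ and $W_\bullet$ on $\calA^\bullet_X(\log D)$; since $\Omega \in F^0W_{-1}$, Hain's trivialization is strictly bifiltered and induces the tautological map on associated gradeds. On the Hain--Zucker side the same gradeds arise from the bar-complex construction, so the two bifiltered structures agree. The main obstacle is the residue-matching step in the second paragraph: one must confirm that the smooth trivialization produced by $\Omega$ really picks out Deligne's canonical extension rather than some other logarithmic extension across $D$, and that the $\overline{\partial}$-inverse yields genuine log-pole behaviour rather than mere meromorphic singularities. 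This verification, which ultimately relies on a careful residue computation carried out layer by layer, forms the technical core of~\cite[Theorem 7.15]{Hain15}.
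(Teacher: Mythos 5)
This statement is not proved in the paper at all: it is imported verbatim from Hain (\cite[Lemma 7.5, Theorem 7.15]{Hain15}), so there is no in-paper argument to compare your sketch against. Taken on its own merits, your outline is a reasonable reconstruction of Hain's proof: the integrability of the $(0,1)$-part of $\nabla_\Omega$ (solved layer by layer, since $\Omega$ has only components of tensor degree $\geq 1$ and its $(0,1)$-part is a smooth form with no poles along $D$) gives the pro-holomorphic structure; universality is checked against the marked-object characterization of Proposition~\ref{propa1}; nilpotence of $\mathrm{Res}\,\Omega$ identifies the extension across $D$ with Deligne's canonical extension; and the filtrations are compared with the Hain--Zucker bar construction. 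Two caveats. First, a small slip: the fact that the connecting homomorphism at each layer is the identity is forced by the choice of $\Omega^{(1)}=\Omega_1$, whose class represents $\mathrm{id}\in\End(\rmH^1_{\dR}(Y))$; the higher terms $\Omega^{(N+1)}$ are only integrability corrections and play no role in that identification, so your phrase ``by the very recipe defining $\Omega^{(N+1)}$'' points at the wrong part of the construction. Second, the two places you yourself flag --- verifying that the $\overline{\partial}$-trivialization produces logarithmic (canonical-extension) behaviour at $D$, and that the $F^\bullet,W_\bullet$ induced by $\Omega\in F^0W_{-1}$ agree with those coming from the reduced bar complex --- are exactly the substance of Hain's Lemma 7.5 and Theorem 7.15, and your proposal defers to the reference there rather than supplying the argument; that is acceptable for a quoted result, but it means your text is an outline of the citation rather than an independent proof.
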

\begin{example}
	\label{ex2}We use the same notation as in Example~\ref{ex1}.
	Then, we have $\nabla_{\Omega}=\nabla_{\Omega}^{(1,0)}$.
	Therefore,  the analytification of $(\overline{\calV}_\infty,\overline{\nabla}_\infty)$ is given by
	\[
	\overline{\calV}_{\infty,\an}=\bfQ\la\!\la e_0,e_1\ra\!\ra\widehat{\otimes}_\bfQ\calO_{X_\an},\quad \overline{\nabla}_{\infty,\an}=d-e_0\otimes\frac{dt}{t}-e_1\otimes\frac{dt}{t-1},
	\]
	which is canonically isomorphic to 
	the analytification of the KZ-connection on $\bfP^1_{\bfC}$.
\end{example}
According to the universality of $(\overline{\calV}_{\infty,\an},\overline{\nabla}_\infty)$,
there exists a \emph{canonical} isomorphism
\begin{equation}\label{triv1}
(\overline{\calV}_{\infty,\an},\overline{\nabla}_\infty)\isom (\widehat{\calV}_{\Omega,\mathrm{hol}},\nabla_{\Omega\mathrm{hol}})
\end{equation}
of pro-flat connections on $X_{\an}$
which sends the local section $t_\infty$ to $1$.
We also  note that there exists a \emph{canonical} comparison isomorphism
\begin{equation}\label{triv2}
\calL_{\infty}\otimes_{\bfQ}\calO_{Y_\an}\xrightarrow{\ \sim\ }\calV_{\infty,\an}
\end{equation}
of flat connections, which induces an isomorphism
\[
\varprojlim_N\bfC[\pi_1^{\top}(Y,b)]/I_b^{N+1}\xrightarrow{\ \sim\ }\calV_{\infty,b}
\]
sending $1$ to $t_\infty$, where $I_b$ is the augmentation ideal of $\bfZ[\pi_1^{\top}(Y,b)]$.
The composition of (\ref{triv1}) and (\ref{triv2}) then induces a canonical isomorphism
\[
t_{\Omega,y}^b\colon \varprojlim_{N}\bfC[\pi_1(Y;b,y)]/\bfC[\pi_1(Y;b,y)]I_b^{N+1}\isom \widehat{T}(\rmH_{\dR}^1(Y/\bfC)).
\]
The isomorphism \emph{does not depend on the choice of $t_\infty$} because of the normalization of (\ref{triv2}).
We call this isomorphism the \emph{trivialization associated with $\Omega$}.

\subsection{The definition}\label{BWfct}
Let $c_{\Omega,\rmB,y}^b$ denote the complex conjugation on $\widehat{T}(\rmH_1^{\dR}(Y/\bfC))$ with respect to
the real structure
\[
t_{\Omega,y}^b\colon\varprojlim_N\bfR[\pi_1^{\top}(Y;b,y)]/\bfR[\pi_1^{\top}(Y;b,y)]I_b^N\intomap \widehat{T}(\rmH_1^{\dR}(Y/\bfC)).
\]
\begin{definition}\label{defofBWfct}We define an element $I^\Omega(b,y)$ of $\widehat{T}(\rmH_1^{\dR}(Y/\bfC))^\times$
	by the equation
	\[
	I^\Omega(b,y)=c_{\Omega,\rmB,y}^b(1).
	\]
\end{definition}
We regard $y\mapsto I^\Omega(b,y)$ as a $\widehat{T}(\rmH_1^{\dR}(Y/\bfC))$-valued function on $Y(\bfC)$.
\begin{definition}[Generalized Bloch--Wigner functions]\label{defofGBWfct}
	\label{dfn2-1}Let $\calH$ denote a Hall basis of the free Lie algebra generated by a basis of $\rmH_1^{\dR}(Y/\bfC)$ (\cite[Part 1, Section 5]{SerreLie}).
	We then define the complex number $D^\Omega_{h}(b,y)$ for $h\in \calH$ by
	\[
	\log(I^\Omega(b,y))=\sum_{h\in\calH}D^\Omega_{h}(b,y)h.
	\]
\end{definition}
We call the function $D^\Omega_{h}(b,y)$, regarded as a function of $y$, a generalized Bloch--Wigner function.
By definition, it is single-valued.
Proposition~\ref{prop2-2} below implies that the function $y\mapsto D^\Omega_{h}(b,y)$ is a smooth function in $y$.

\subsection{Iterated integrals and the generalized Bloch--Wigner function}
For complex-valued smooth one-forms $\omega_1,\dots,\omega_i$ and for a smooth path $\gamma$ 
on $Y(\bfC)$, we define the iterated integral
$\int_{\gamma}\omega_1\cdots\omega_i$ by
\[
\int_{\gamma}\omega_1\cdots\omega_i=\int_{0\leq t_i\leq t_{i-1}\leq \cdots\leq t_1\leq 1}\gamma^*(\omega_1)(t_1)\cdots\gamma^*(\omega_i)(t_i).
\]
Note that our choice of integration order is the same as~\cite{DRS15}, but is opposite of Hain's convention (e.g.\ ~\cite{Hain87}),
because our path composition law is also the reverse of Hain's law.

Let us fix a one-form \[
\Omega\in \widehat{T}(\rmH_1^{\dR}(Y/F))\widehat{\otimes}_FA^1(X\log D))
\]
satisfying the same properties as in Subsection~\ref{trivializations}.
Then, for a smooth path $\gamma$ from $y$ to $b$, the parallel transport along $\gamma$ and (\ref{triv1}) defines an isomorphism
\[
T(\gamma)\colon \widehat{T}(\rmH_1^{\dR}(Y/\bfC))\isom\widehat{T}(\rmH_1^{\dR}(Y/\bfC)).
\]
Suppose that $b$ lies over an $F$-rational point of $Y$. Then, this isomorphism is given by the \emph{left multiplication} by the element
\begin{equation}
	\label{eq9}
	\sum_{n=0}^\infty\int_{\gamma}\Omega^n
\end{equation}
of $\widehat{T}(\rmH_1(Y/\bfC))$.
Here, the iterated integral $\int_{\gamma}\Omega^n$ is defined by
\[
\int_{\gamma}\Omega^n=\sum_{w_1,\dots,w_n\in B}w_1\cdots w_n\int_{\gamma}\alpha_{\omega_1}\cdots\alpha_{w_n},
\]
where $\Omega=\sum_{w\in B}w\alpha_w$.
Indeed, on the universal covering $\widetilde{Y(\bfC)}$ of $Y(\bfC)$, and for a lift $\tilde y\in \widetilde{Y(\bfC)}$ of $y$, each section
\[
\left(\sum_{n=0}^\infty\int_{\tilde y}^{\tilde b}\Omega^n|_{\widetilde{Y(\bfC)}}\right)v,\quad v\in \widehat{T}(\rmH_1^{\dR}(Y/\bfC))
\]
defines a flat section with respect to $\nabla_{\Omega}$.

When $b$ is tangential, in order to describe $T(\gamma)$,  we follow Deligne's definition of the regularized iterated integral (\cite{Del89}; cf.\ ~\cite[Section 4.2]{Brown17}).
Let $b$ be a non-zero element of $ T_cX$, with $c\in D$, and let
\[
\varphi\colon \bbD\to X_{\an}
\]
be a local isomorphism from an open disc $\bbD \subset \bfC$ to $X_{\an}$, centered in $c$, satisfying $d\varphi (0)=b$.
For any path $\gamma$ from $y$ to $b$, and for any sufficiently small positive real number $\epsilon$, let $\gamma_\epsilon$ denote the subpath of $\gamma$ starting from $\varphi(\epsilon)$.
The residue map associated to $b$
\[
\mathrm{Res}_b\colon \widehat{T}(\rmH_1^{\dR}(Y/F))\widehat{\otimes}_FA^1(X\log D)\to \widehat{T}(\rmH_1^{\dR}(Y/F))\widehat{\otimes}_F\bfC
\]
is defined to be the coefficient of $dq_c/q_c$ in the homomorphism induced by the residue map in (\ref{eq3}),
where $q_c$ is a local parameter at $c$ such that $\partial/\partial q_c=b$.
Then one can show that the limit
\begin{equation}
	\label{eq5}
	\lim_{\epsilon\to 0}\exp(-\log(\epsilon)\mathrm{Res}_b(\Omega))\left(\sum_{n=0}^\infty\int_{\gamma_\epsilon}\Omega^n\right)
\end{equation}
converges, by analyzing the local behavior of these integrals (see the proof of~\cite[Proposition 15.45]{Del89}).
We shall use the same notation for the regularized iterated integral as for the usual one,
namely,
\[
\sum_{n=0}^\infty\int_{\gamma}\Omega^n:=\lim_{\epsilon\to 0}\exp(-\log(\epsilon)\mathrm{Res}_c(\Omega))\left(\sum_{n=0}^\infty\int_{\gamma_\epsilon}\Omega^n\right),
\]
for any smooth path $\gamma$ from $y$ to $b$.
For a smooth path $\gamma$ \emph{from $b$ to $y$}, we define the regularized iterated integral by
\[
\sum_{n=0}^\infty\int_{\gamma}\Omega^n:=\left(\sum_{n=0}^\infty\int_{\gamma^{-1}}\Omega^n\right)^{-1}
\]
(this definition will be justified by lemma~\ref{lem2-3} below).
We refer to these quantities and their coefficients as \emph{regularized iterated integrals along $\gamma$}.
Since the usual iterated integrals satisfy the shuffle relation, the regularized iterated integrals
also satisfy the shuffle relation (cf.\ ~\cite[Proposition 3.2]{Brown17}).
In other words, this is contained in the set $\exp(\widehat{L}(\rmH_1^{\dR}(Y/\bfC)))$ of group like elements of $\widehat{T}(\rmH_1^{\dR}(Y/\bfC))$.

The regularized iterated integral can be computed as follows
(see~\cite[Section 4.1]{Brown17}).
For one-forms $\alpha_1,\dots,\alpha_n\in A^1(X\log D)$ and a smooth path $\gamma$ from $y$ to $b$, define
\begin{equation}
	\label{eq10}
	\int_\gamma\alpha_{1}\cdots\alpha_{n}:=\lim_{\epsilon \to 0}\sum_{i=0}^n\int_{\epsilon}^1\mathrm{Res}_c(\alpha_1)\cdots\mathrm{Res}_c(\alpha_i)\int_{\gamma_\epsilon}\alpha_{i+1}\cdots\alpha_{n}.
\end{equation}
Then the regularized iterated integral $\sum_{n=0}^\infty\int_\gamma\Omega^n$ can be
computed in the same way as in the case of an ordinary base point.
\begin{example}
	\label{ex3} We use the same notation as in Example~\ref{ex1}.
	Then the residue $\mathrm{Res}_0(\Omega)$ is equal to $e_0$.
	Let $y$ be a real number contained in the open interval $(0,1)$.
	Then the regularized iterated integrals along the straight line from $y$ to $d/dt$ can be computed as follows:
	\begin{equation*}
		\label{eq6}
		\begin{split}
			\sum_{n=0}^\infty\int_{y}^{d/dt}\Omega^n&=\lim_{\epsilon\to 0}\exp(-\log(\epsilon)e_0)\left(\sum_{n=0}^\infty\int_{y}^{\epsilon}\Omega^n\right)\\
			&=\lim_{\epsilon\to 0}\big[\exp(-\log(\epsilon)e_0)\big(1+e_0(\log(\epsilon)-\log(y))+e_1(\log(1-\epsilon)-\log(1-y))\\
			&\hspace{1cm}+e_0e_1(\log(1-y)(\log(y)-\log(\epsilon))+\mathrm{Li}_2(y)-\mathrm{Li}_2(\epsilon))\\
			&\hspace{1cm}+e_1e_0(\mathrm{Li}_2(\epsilon)-\mathrm{Li}_2(y)-\log(1-\epsilon)(\log(y)-\log(\epsilon))+\cdots)\big]\\
			&=\lim_{\epsilon\to 0}(1-\log(y)e_0-(\log(1-y)-\log(1-\epsilon))e_1\\
			&\hspace{1cm}+(\log(1-y)\log(y)+\mathrm{Li}_2(y)-\mathrm{Li}_2(\epsilon)-\log(\epsilon)\log(1-\epsilon))e_0e_1\\
			&\hspace{1cm}(\mathrm{Li}_2(\epsilon)-\mathrm{Li}_2(y)-\log(1-\epsilon)(\log(y)-\log(\epsilon))e_1e_0+\cdots)\\
			&=1-\log(y)e_0-\log(1-y)e_1+(\log(1-y)\log(y)+\mathrm{Li}_2(y))e_0e_1-\mathrm{Li}_2(y)e_1e_0+\cdots.
		\end{split}
	\end{equation*}
	Here $\mathrm{Li}_2(y)$ is the dilogarithm function defined by
	\[
	\mathrm{Li}_2(y)=\sum_{n=1}^\infty\frac{y^n}{n^2},\qquad |y|<1.
	\]
\end{example}
\begin{lemma}[{\cite[Proposition 15.45]{Del89}; cf.\ ~\cite[Section 8.2]{Luo}}]
	\label{lem2-3}Let $\gamma$ be a smooth path from $y$ to the tangent vector $b$.
	Then, the isomorphism\[
	T(\gamma)\colon \widehat{T}(\rmH_1^{\dR}(Y/\bfC))\isom \widehat{T}(\rmH_1^{\dR}(Y/\bfC))
	\]
	 induced by parallel transport along $\gamma$ is given by the \emph{left multiplication} by
	the regularized iterated integral
	\[
	\sum_{n=0}^\infty\int_{\gamma}\Omega^n.
	\]
\end{lemma}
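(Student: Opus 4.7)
The plan is to reduce to the ordinary base point case (formula (\ref{eq9})) via Deligne's canonical extension and an asymptotic analysis of flat sections near the tangential endpoint.

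\emph{Step 1: Local form near $c$.} Choose the local parameter $q_c$ at $c$ with $\partial/\partial q_c = b$ and decompose $\Omega = R\,\frac{dq_c}{q_c} + \Omega_0$ on a neighborhood of $c$, where $R := \mathrm{Res}_b(\Omega)\in \widehat{L}(\rmH_1^{\dR}(Y/\bfC))$ and $\Omega_0$ is holomorphic. By Theorem~\ref{thm2-1}, the isomorphism (\ref{triv1}) identifies $(\overline{\calV}_{\infty,\an}, \overline{\nabla}_\infty)$ with $(\widehat{\calV}_{\Omega,\mathrm{hol}}, \nabla_{\Omega,\mathrm{hol}})$. Applied levelwise to the finite-dimensional quotients $V_N^{\dR}$ (each of which carries a regular-singular connection at $c$) and passed to the limit, the standard theory of regular singular ODEs gives the local form
\[
q_c^{R} \cdot h(q_c) \cdot v, \qquad v\in \widehat{T}(\rmH_1^{\dR}(Y/\bfC)),
\]
for multivalued flat sections on a punctured disc at $c$, with $h$ a single-valued, $\widehat{T}$-valued holomorphic function near $0$ normalized so that $h(0)=1$. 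Under this description the fibre of $\overline{\calV}_\infty$ at $c$ is canonically identified with $\widehat{T}(\rmH_1^{\dR}(Y/\bfC))$, and the marking $t_\infty|_b$ corresponds to $1$, since (\ref{triv1}) sends $t_\infty$ to the constant section $1$.

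\emph{Step 2: Bridging ordinary to tangential.} Fix $\epsilon>0$ small, so that $\varphi(\epsilon)\in Y$ is an ordinary point and $\gamma_\epsilon$ is a path between ordinary points. By (\ref{eq9}) the parallel transport along $\gamma_\epsilon$ in the trivialization of Theorem~\ref{thm2-1} is left multiplication by $P_\epsilon := \sum_{n\geq 0} \int_{\gamma_\epsilon} \Omega^n$. On the other hand, the flat section corresponding to $v\in \widehat{T}(\rmH_1^{\dR}(Y/\bfC))$ at $b$ takes the value $\epsilon^{R} h(\epsilon)\,v$ at $\varphi(\epsilon)$, so inverting gives
\[
T(\gamma)\,v = \lim_{\epsilon\to 0}\, h(\epsilon)^{-1}\exp(-\log(\epsilon) R)\,P_\epsilon\,v = \lim_{\epsilon\to 0}\,\exp(-\log(\epsilon) R)\,P_\epsilon\,v,
\]
where the second equality uses $h(0)=1$. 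By definition (\ref{eq5}) the right-hand side is exactly left multiplication by the regularized iterated integral $\sum_{n\geq 0}\int_\gamma \Omega^n$, proving the claim.

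\emph{Step 3: Convergence and main obstacle.} The existence of the limit is not formal; it is verified by expanding $P_\epsilon$ according to (\ref{eq10}) and observing that the logarithmic divergences in $P_\epsilon$ are exactly cancelled by the conjugation factor $\exp(-\log(\epsilon) R)$, following Deligne's computation in~\cite[Proposition 15.45]{Del89} (a toy instance is Example~\ref{ex3}). The main technical obstacle is Step 1: pinning down the identification of $t_\infty|_b$ with $1\in \widehat{T}(\rmH_1^{\dR}(Y/\bfC))$ requires unwinding the universality of $(\overline{\calV}_\infty,\overline{\nabla}_\infty)$ together with the compatibility of (\ref{triv1}) with the canonical extension at $c$ in the pro-unipotent setting, and verifying that the normalization $h(0)=1$ agrees with this marking. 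Once this normalization is established, the remainder of the argument is a direct combination of the ordinary-point formula (\ref{eq9}) with the standard asymptotics of regular-singular pro-connections.
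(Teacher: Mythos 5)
The paper itself offers no proof of Lemma~\ref{lem2-3}: it is quoted from~\cite[Proposition 15.45]{Del89} (cf.\ \cite{Luo}), and your argument reconstructs essentially that standard proof — reduce to the ordinary-point formula (\ref{eq9}) and regularize the asymptotics at the regular singular point through the canonical extension. Your Step 2 identity $T(\gamma)v=\lim_{\epsilon\to 0}\exp(-\log(\epsilon)R)P_\epsilon v$ is exactly the definition (\ref{eq5}), so the overall architecture is correct.

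There is, however, a genuine gap in Step 1 as written. The form $\Omega$ lies in $\widehat{T}(\rmH_1^{\dR}(Y/F))\widehat{\otimes}_FA^1(X\log D)$, so after subtracting $R\,dq_c/q_c$ the remainder $\Omega_0$ is only a \emph{smooth} form: it has a $(0,1)$-component and non-holomorphic coefficients (already $\Omega^{(1)}$ contains the antiholomorphic classes, and the higher $\Omega^{(n)}$ are generally not holomorphic). Hence the "standard theory of regular singular ODEs" cannot be invoked directly in the constant frame of $\widehat{T}(\rmH_1^{\dR}(Y/F))\widehat{\otimes}_F\calA^0_X$, and the assertion that flat sections have the shape $q_c^{R}h(q_c)v$ with $h$ single-valued, holomorphic and $h(0)=1$ is precisely what needs to be proved, not quoted. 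The correct route is through Theorem~\ref{thm2-1}: flat sections of $\nabla_\Omega$ are the flat sections of $(\widehat{\calV}_{\Omega,\mathrm{hol}},\nabla_{\Omega,\mathrm{hol}})$, which is the canonical extension; in a holomorphic frame of this subsheaf near $c$ one has the regular-singular normal form (using that the residue is pro-nilpotent, so the eigenvalue condition holds on each layer $V_N^{\dR}$), and one must then control the smooth gauge comparing that frame with the constant frame $1\otimes\widehat{T}$ — its value at $c$ is what identifies the fibre — and check that the residue read in the constant frame is $\mathrm{Res}_b(\Omega)$, which is the content of the Remark following the lemma ($\mathrm{Res}_b(\Omega)=-\mathrm{Res}(\nabla_{\Omega,\mathrm{hol}})$). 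Relatedly, the point you single out as the main obstacle — matching $t_\infty|_b$ with $1$ — is not an obstacle at all: the isomorphism (\ref{triv1}) is normalized to send $t_\infty$ to $1$ by construction. The genuinely delicate points are the smooth-versus-holomorphic issue just described and the convergence of $\exp(-\log(\epsilon)R)P_\epsilon$, which you, like the paper, ultimately defer to Deligne; with those supplied, your Step 2 does yield the lemma, the nilpotency of $R$ on each finite level ensuring that conjugation by $\exp(\log(\epsilon)R)$ only produces powers of $\log\epsilon$ and so does not disturb the limit.
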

\begin{remark}
	The residue $\mathrm{Res}_b(\Omega)$ coincides with $-\mathrm{Res}(\nabla_{\Omega,\mathrm{hol}})$
	in the sense of~\cite[Section 15]{Del89}.
\end{remark}
We write $c_{\rmB}$ as the automorphism $c_{\Omega,\rmB,b}^b$
of $\widehat{T}(\rmH_1^{\dR}(Y/\bfC))$ defined in the previous subsection.
\begin{proposition}
	\label{prop2-2}Let $\Omega$ be as above.
	Then, for each $y\in Y(\bfC)$, the following identity holds:
	\[
	I^\Omega(b,y)=\left(\sum_{n=0}^\infty\int_{\gamma}\Omega^n\right)c_{\rmB}\left(\sum_{n=0}^\infty\int_{\gamma}\Omega^n\right)^{-1}.
	\]
	Here $\gamma$ is any smooth path \emph{from $b$ to $y$}.
\end{proposition}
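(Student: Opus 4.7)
The plan is to translate both sides of the identity into statements about two real structures on the pro-algebra $\widehat{T}(\rmH_1^{\dR}(Y/\bfC))$, namely the one used to define $c_\rmB$ (coming from $b$) and the one used to define $c_{\Omega,\rmB,y}^b$ (coming from $y$), and then relate them by parallel transport. Throughout, set $A:=\sum_{n=0}^\infty\int_\gamma\Omega^n$ for the chosen smooth path $\gamma$ from $b$ to $y$.

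First I would identify the image $t_{\Omega,y}^b([\gamma])\in\widehat{T}(\rmH_1^{\dR}(Y/\bfC))$. By construction the class $[\gamma]\in\calL_{\infty,y}$ is the parallel transport of $1\in\calL_{\infty,b}$ along $\gamma$ in the Betti local system; since the comparisons (\ref{triv1}) and (\ref{triv2}) are horizontal and normalized to identify $1\mapsto t_\infty\mapsto 1$, this translates into the parallel transport of $1\in\widehat{T}(\rmH_1^{\dR}(Y/\bfC))$ for $\widehat{\calV}_{\Omega,\mathrm{hol}}$, which by Lemma~\ref{lem2-3} applied to $\gamma^{-1}$ and inverted (using the convention adopted just before the statement of the proposition) is left multiplication by $A$. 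Hence $t_{\Omega,y}^b([\gamma])=A$. Next, because the local system $\calL_\infty$ is defined over $\bfQ$, the Betti parallel transport along \emph{any} smooth path from $b$ to $y$ (real or not) carries the real form $W_b:=\varprojlim_N\bfR[\pi_1^{\top}(Y,b)]/I_b^N$ onto the real form $W_y:=\varprojlim_N\bfR[\pi_1^{\top}(Y;b,y)]/\bfR[\pi_1^{\top}(Y;b,y)]I_b^N$, so the previous identification yields the crucial relation
\[
t_{\Omega,y}^b(W_y)=A\cdot t_{\Omega,b}^b(W_b).
\]

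The remainder is an algebraic manipulation. Given $v\in\widehat{T}(\rmH_1^{\dR}(Y/\bfC))$, decompose $A^{-1}v=x+\sqrt{-1}\,w$ with $x,w\in t_{\Omega,b}^b(W_b)$; then $v=Ax+\sqrt{-1}\,Aw$ with $Ax,Aw\in t_{\Omega,y}^b(W_y)$, so the $\bfC$-antilinearity of $c_{\Omega,\rmB,y}^b$ together with the fixing of its real form forces
\[
c_{\Omega,\rmB,y}^b(v)=Ax-\sqrt{-1}\,Aw=A\cdot c_\rmB(A^{-1}v).
\]
Setting $v=1$ gives $I^\Omega(b,y)=A\cdot c_\rmB(A^{-1})$. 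Since $W_b$ is a real subalgebra of $\varprojlim_N\bfC[\pi_1^{\top}(Y,b)]/I_b^N$, the map $c_\rmB$ is a $\bfR$-algebra automorphism of $\widehat{T}(\rmH_1^{\dR}(Y/\bfC))$, so $c_\rmB(A^{-1})=c_\rmB(A)^{-1}$, and one recovers the claimed formula.

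The main obstacle will be the tangential case in the first step: one has to ensure that the regularized iterated integrals defined by (\ref{eq5}) really implement the Betti parallel transport to or from Deligne's tangential base points, and that the $\bfQ$-structure on $\calL_{\infty,b}$ survives the regularization so that the transport of real forms still makes sense. Lemma~\ref{lem2-3} supplies the required parallel-transport identity in the tangential setting; granted it, the remaining check is the harmless observation that the limit in (\ref{eq5}) is compatible with the algebra automorphism $c_\rmB$, after which the linear-algebra argument of the third step applies uniformly in $b$ and $y$.
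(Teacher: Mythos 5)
Your argument is correct and is essentially the paper's own proof in slightly different packaging: the paper establishes the identity $c_{\rmB,y}=T(\gamma)\circ c_{\rmB,b}\circ T(\gamma)^{-1}$ by noting that Betti parallel transport commutes with complex conjugation (a commutative diagram of fiber functors) and then invokes Lemma~\ref{lem2-3}, while you obtain the same conjugation relation by transporting the real (Betti) structures via left multiplication by $A=\sum_n\int_\gamma\Omega^n$ and using antilinearity. Both proofs rest on the same key input, Lemma~\ref{lem2-3}, so no further comparison is needed.
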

\begin{proof}For simplicity, we write $c_{\rmB,z}$ as the isomorphism
	\[
	\omega_z^{\dR}\otimes\bfC\isom \omega_z^{\dR}\otimes\bfC
	\]
	of fiber functors induced by the Betti--de Rham comparison isomorphisms and the complex conjugations with respect to the Betti $\bfR$-structure.
	We also write $T(\gamma)$ as the automorphism $\omega_{\dR,b}\otimes\bfC\isom\omega_{\dR,y}\otimes \bfC$ induced by parallel transport along $\gamma\in \pi_1^{\top}(Y;b,y)$ by abuse of notation.
	If we regard $\gamma$ as an isomorphism of fiber functors, the following diagram commutes:
	\[
	\xymatrix{
	\omega_{\rmB,b}\otimes\bfC\ar[rrr]^{1\otimes \iota}\ar[d]_\gamma& & &\omega_{\rmB,b}\otimes\bfC\ar[d]^\gamma\\
		\omega_{\rmB,y}\otimes\bfC\ar[rrr]^{1\otimes \iota}& & &\omega_{\rmB,y}\otimes\bfC.
	}
	\]
	Here, $\iota$ denotes the usual complex conjugation on $\bfC$.
	Translating this commutative diagram to the de Rham side yields the identity
	\[
	c_{\rmB,y}=T(\gamma)\circ c_{\rmB,b}\circ T(\gamma^{-1})
	\]
	of natural transformations.
	The proposition now follows directly from Lemma~\ref{lem2-3}.
\end{proof}

\subsection{Complex conjugations}\label{cpxconj}
Let $A^\bullet(Y)$ denote the complex of complex-valued smooth differential forms, namely, $A^\bullet(Y):=\Gamma(Y(\bfC),\calA_Y^\bullet)$.
Let $A^\bullet(Y)_{\bfR}$
denote its natural $\bfR$-structure, defined by the subsheaf of real-valued differential forms.
Let $c$ denote the complex conjugation on $A^\bullet(Y)$
with respect to the real structure $A^\bullet(Y)_{\bfR}$.
Note that $c$ preserves the subcomplex $A^\bullet(X)\subset A^\bullet(Y)$, and coincides with the complex conjugation
with respect to the real-valued differential forms $A^\bullet(X)_{\bfR}$ on $X(\bfC)$.
By abuse of notation, we continue to write $c$ for its restriction to $A^\bullet(X)$.
By definition, $c$ interchanges $(p,q)$ forms and $(q,p)$-forms.

Let
\[
F_\infty\colon Y(\bfC)\to Y(\bfC)
\]
be the map induced by the complex conjugation on $\bfC$.
The pull-back $F_\infty^*$ by $F_\infty$ defines an automorphism of $A^\bullet(Y)$.
It also preserves $A^\bullet(X)$ and interchanges $(q,p)$-forms and $(p,q)$-forms.
Therefore, the composition $c\circ F_\infty^*$ of $A^\bullet(X)$ preserves
the Hodge filtration.
Note that for any affine subscheme $U$ of $X$ defined over $F$, and for any $t\in \Gamma(U,\calO_X)\otimes_F\bfC$, we have
\begin{equation}\label{eq4}
	c\circ F^*_\infty(t)=j(t),
\end{equation}
where $j$ is the complex conjugation on $\Gamma(U,\calO_X)\otimes_F\bfC$ defined by the
real structure $\Gamma(U,\calO_X)\otimes_F\bfR$ (Note that $\Gamma(U,\calO_X)$ is not necessarily contained in $A^0(U)_{\bfR}$).
This identity can be verified by embedding $X$ into a projective space $\bfP^M_F$.
Neither $c$ nor $F_\infty^*$ preserve the subcomplex
$A^\bullet(X \log D)\subset A^\bullet(Y)$.
However, their composition does.
\begin{lemma}
	\label{lem2-2}The composition $c\circ F_\infty^*$ preserves
	$A^\bullet(X \log D)$. Moreover, it preserves both the Hodge and the weight filtrations.
\end{lemma}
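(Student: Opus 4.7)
The plan is to reduce everything to a local computation anchored by the identity (\ref{eq4}). I would cover $X$ by affine opens $U$ defined over $F$. On each such $U$, the sheaf $\Omega^\bullet_U(\log D|_U)$ is locally generated over $\calO_U$ by elements of the form $dt$ and $df/f$, where $t\in\Gamma(U,\calO_U)$ is any regular function and $f\in\Gamma(U,\calO_U)$ is a local defining equation for a component of $D$. After tensoring with $\calA^{0,*}$, a local generating set of $\calA^\bullet_X(\log D)$ is obtained by wedging such generators with factors $d\bar t$ and multiplying by smooth complex coefficients.

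Next I would compute the action of $c\circ F_\infty^*$ on each such factor. Since $d$ is $\bfR$-linear and commutes with both $F_\infty^*$ and $c$, the identity (\ref{eq4}) extends to
\begin{align*}
c\circ F_\infty^*(dt) &= d(j(t)),\\
c\circ F_\infty^*(df/f) &= d(j(f))/j(f),\\
c\circ F_\infty^*(d\bar t) &= d(\overline{j(t)}),
\end{align*}
while a smooth coefficient $g$ is sent to $\overline{g\circ F_\infty}$. Because $j$ stabilizes $\Gamma(U,\calO_U)\otimes_F\bfC$ and carries local defining equations of $D$ to local defining equations of $D$ (since $D$ is defined over $F$), each transformed factor is again of the same type and, crucially, has the same log-holomorphic degree and antiholomorphic degree as the original. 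This proves that $c\circ F_\infty^*$ preserves $\calA^\bullet_X(\log D)$ as well as the bigrading by $(p,q)$.

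The Hodge-filtration claim is then immediate: $F^i\calA^\bullet_X(\log D)$ is by definition the subcomplex of log-holomorphic degree $\geq i$, and we have just shown that log-holomorphic degree is preserved. For the weight filtration, the only non-trivial thing to check is that $W_0=\calA^\bullet_X$ is stable, and this is clear because $F_\infty$ is a self-diffeomorphism of the whole $X(\bfC)$, not just of $Y(\bfC)$, and $c$ evidently preserves globally smooth forms, so $c\circ F_\infty^*$ sends $\calA^\bullet_X$ into itself.

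The main technical point to watch is the existence of local defining equations $f$ when a component of $D$ is not itself $F$-rational on the nose: around a Galois orbit $\{c_1,\dots,c_r\}$ of non-$F$-rational closed points of $D$, one cannot split $f$ as a product of $F$-rational factors, but one can still use a single $F$-rational $f\in\Gamma(U,\calO_U)$ whose zero locus on $U(\bfC)$ is exactly the orbit. Then the displayed formulae apply verbatim and the log-pole of $df/f$ is preserved globally; the analytic factorization $f=\prod_i f_i$ near the orbit, which is used only to extract the $(p,q)$-type of the output, is shuffled by $F_\infty^*$ and restored by $c$. Once this bookkeeping is in place, the argument is a direct consequence of (\ref{eq4}).
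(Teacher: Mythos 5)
Your proposal is correct and follows essentially the same route as the paper: both reduce the claim to the identity (\ref{eq4}), computing that $c\circ F_\infty^*$ sends $dt$, $df/f$, and $d\bar t$ to $d(j(t))$, $d(j(f))/j(f)$, and $d(\overline{j(t)})$ respectively, and then use that $j$ carries (local equations of) $D$ to (local equations of) $D$ because $D$ is defined over $F\subset\bfR$. Your extra bookkeeping of log-holomorphic and antiholomorphic degrees, and of Galois orbits of non-$F$-rational points of $D$, simply spells out the paper's brief remark that ``$j(q)$ is also a local parameter of a point of $D$'' and that the filtration statements ``follow similarly.''
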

\begin{proof}
	According to (\ref{eq4}), we have the equation
	\[
	c\circ	F_\infty^*\left(\frac{dq}{q}\right)=\frac{dj(q)}{j(q)}
	\]
	for any local parameter $q$ at a point of $D$.
	Since $j(q)$ is also a local parameter of a point of $D$, the automorphism
	$c\circ F_\infty^*$ of $A^\bullet(Y)$ preserves $A^\bullet(X\log D)$.
	It follows similarly that $c\circ F_\infty^*$ preserves the Hodge and the
	weight filtration.
\end{proof}
\begin{proposition}
	\label{prop2-1}There exists a one-form \[
	\Omega\in F^0W_{-1}(\widehat{T}(\rmH_1^{\dR}(Y/F))\widehat{\otimes}_FA^1(X\log D))
	\]
	such that
	\[
	d\Omega-\Omega\wedge\Omega=0
	\]
	and
	\[
(\mathrm{id}_{\widehat{T}(\rmH_1^{\dR}(Y/F))}\widehat{\otimes}c)\circ F_\infty^*(\Omega)=\Omega.
	\]
\end{proposition}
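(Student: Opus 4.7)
The plan is to run Hain's inductive construction of $\Omega$, recalled in Subsection~\ref{trivializations}, while enforcing $\sigma$-invariance at every stage by symmetrisation, where $\sigma:=\mathrm{id}_{\widehat{T}(\rmH_1^{\dR}(Y/F))}\widehat{\otimes}(c\circ F_\infty^*)$ is the involution appearing in the proposition. The first step is to establish the formal properties of $\sigma$. By Lemma~\ref{lem2-2}, $c\circ F_\infty^*$ preserves $A^\bullet(X\log D)$ together with both $F^\bullet$ and $W_\bullet$; because $F_\infty$ is a real map, $c$ and $F_\infty^*$ commute, so $\sigma$ is a $\bfC$-antilinear involution of $\widehat{T}(\rmH_1^{\dR}(Y/F))\widehat{\otimes}_F A^\bullet(X\log D)$, it commutes with $d$, and it is multiplicative with respect to $\wedge$. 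In particular, $\sigma$ preserves the subspace $F^0 W_{-1}$ of the relevant complex.

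Next I would identify the induced action of $\sigma$ on cohomology. For a logarithmic one-form $\omega=f\,dz$ defined over $F$, equation (\ref{eq4}) together with $F_\infty^*(dz)=d\bar z$ gives $F_\infty^*(\omega)=\bar f\,d\bar z=c(\omega)$, so $(c\circ F_\infty^*)(\omega)=\omega$. Consequently $\sigma$ acts on $\rmH^1(A^\bullet(X\log D))\cong\rmH^1_{\dR}(Y/F)\otimes_F\bfC$ as complex conjugation with respect to the natural real structure, and therefore the identity cohomology class in $\rmH_1^{\dR}(Y/F)\otimes_F\rmH^1_{\dR}(Y/F)\otimes_F\bfC$ (represented by $\sum_i e_i^\vee\otimes e_i\otimes 1$ for any $F$-basis $\{e_i\}$ of $\rmH^1_{\dR}(Y/F)$) is $\sigma$-invariant.

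At the base stage I would take any representative $\Omega_1\in F^0 W_{-1}(\rmH_1^{\dR}(Y/F)\otimes_F A^1(X\log D))$ of the identity class and replace it by $\tfrac12(\Omega_1+\sigma(\Omega_1))$; this remains in $F^0W_{-1}$, is $\sigma$-invariant, and represents the same (invariant) class. For the inductive step, assume $\Omega_N\in F^0 W_{-1}$ is $\sigma$-invariant and satisfies $d\Omega_N-\Omega_N\wedge\Omega_N=0$ in $V_N^{\dR}\otimes_F A^2(X\log D)$. Hain's argument, based on the vanishing $\rmH^2(Y(\bfC),\calF)=0$ for any constructible sheaf $\calF$ and on~\cite[3.2.8]{Hain87b}, supplies some $\Omega^{(N+1)}\in F^0 W_{-1}(\rmH_1^{\dR}(Y/F)^{\otimes(N+1)}\otimes_F A^1(X\log D))$ satisfying $-d\Omega^{(N+1)}-d\Omega_N+\Omega_N\wedge\Omega_N=0$. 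The crucial point is that the equation for $\Omega^{(N+1)}$ is \emph{linear} once $\Omega_N$ is fixed, and its right-hand side is $\sigma$-invariant by the inductive hypothesis and the compatibility of $\sigma$ with $d$ and $\wedge$. Thus symmetrising $\Omega^{(N+1)}\mapsto\tfrac12(\Omega^{(N+1)}+\sigma(\Omega^{(N+1)}))$ yields a $\sigma$-invariant solution still in the correct filtered subspace. Setting $\Omega_{N+1}:=\Omega_N+\Omega^{(N+1)}_{\mathrm{sym}}$ and taking the inverse limit $\Omega:=\varprojlim_N\Omega_N$ completes the construction.

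The hard part lies in the preparatory verifications: that $\sigma$ is genuinely an antilinear involution compatible with $d$, $\wedge$, and both filtrations, and that it acts trivially on the identity cohomology class. A naive approach of symmetrising Hain's existing $\Omega$ directly would not work, because the Maurer--Cartan equation $d\Omega=\Omega\wedge\Omega$ is nonlinear; it is essential to symmetrise one graded piece at a time, where at each step the defining equation for the new piece is linear.
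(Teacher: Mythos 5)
Your proposal is correct and follows essentially the same route as the paper: run Hain's inductive construction and, at each stage, replace $\Omega_1$ and each new piece $\Omega^{(N+1)}$ by its average under $\frac{1+c\circ F_\infty^*}{2}$, using that the defining equation for $\Omega^{(N+1)}$ is linear with a $(c\circ F_\infty^*)$-invariant right-hand side once $\Omega_N$ is fixed, and that the filtration conditions are preserved by Lemma~\ref{lem2-2}. The preparatory checks you spell out (that $c\circ F_\infty^*$ is an antilinear involution compatible with $d$ and $\wedge$, and that it fixes the identity cohomology class via (\ref{eq4})) are exactly the points the paper leaves implicit, so this is the same argument, only with those details made explicit.
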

\begin{proof}For simplicity, we write $c$ for $\mathrm{id}_{\widehat{T}(\rmH_1^{\dR}(Y/F))}\widehat{\otimes}c$.
	We construct a system of one-forms $\Omega_{N}$ by closely following Hain’s construction.
	
	First, we take $\Omega_1$ as before, and replace it with $\frac{1+c\circ F_\infty^*}{2}\Omega_1$. By Lemma~\ref{lem2-2}, this one-form also lies
	in $F^0W_{-1}(V_1^{\dR}\otimes_FA^1(X\log D))$.
	
	Suppose that we have constructed a one-form \[
	\Omega_N\in F^0W_{-1}(V_N^\dR\otimes_FA^1(X\log D))
	\]
	satisfying
	\[
	\Omega_N-\Omega_N\wedge\Omega_N=0\qquad c\circ F_\infty^*(\Omega_N)=\Omega_N.
	\]
	Here, the first identity holds in $V_N^\dR\otimes_FA^2(X\log D)$.
	Then, we may choose \[
	\Omega^{(N+1)}\in F^0W_{-1}(\rmH_1^{\dR}(Y/F)^{\otimes (N+1)})
	\]
	satisfying
	\[
	-d\Omega^{(N+1)}-d\Omega_N+\Omega_N\wedge\Omega_N=0
	\]
	in $V_{N+1}^\dR\otimes_FA^2(X\log D)$.
	Applying $c\circ F_\infty^*$ to this equation, we obtain
	\[
	-d(c\circ F_\infty^*(\Omega^{(N+1)}))-d\Omega_N+\Omega_N\wedge\Omega_N=0.
	\]
	Therefore, replacing $\Omega^{(N+1)}$ with \[
	\frac{1+c\circ F_\infty^*}{2}\Omega^{(N+1)},
	\]
	which also satisfies the filtration condition by Lemma~\ref{lem2-2},
	we obtain the desired one-form $\Omega_{N+1}$.
\end{proof}
By using the trivialization
\[
t_{\Omega,b}^b\colon \varprojlim_N\bfC[\pi_1^{\top}(Y,b)]/I_{b,\bfC}^N\isom \widehat{T}(\rmH_1^{\dR}(Y/\bfC)),
\]
we define the automorphism $\varphi_{\infty}$ of $\widehat{T}(\rmH_1^{\dR}(Y/\bfC))$ induced by
$F_\infty$. Here, $I_{b,\bfC}$ is the augmentation ideal of $\bfC[\pi_1^{\top}(Y,b)]$.
Let $c_{\dR}$ denote the complex conjugation on $\widehat{T}(\rmH_1^{\dR}(Y/\bfC))$
with respect to the real structure $\widehat{T}(\rmH_1^{\dR}(Y/\bfR))$.
At the end of this subsection, we prove the relation between complex conjugations, which will be used later.
\begin{proposition}
	\label{prop2-5}If $\Omega$ satisfies the conditions in Proposition~\ref{prop2-1},
	then the following identity on $\widehat{T}(\rmH_1^{\dR}(Y/\bfC))$ holds$:$
	\[
	c_{\rmB}\varphi_\infty=c_{\dR}.
	\]
\end{proposition}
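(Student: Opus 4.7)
The plan is to check the identity $c_\rmB\varphi_\infty = c_\dR$ by evaluating both sides on the elements $T_\gamma := t_{\Omega,b}^b([\gamma]) \in \widehat{T}(\rmH_1^\dR(Y/\bfC))$ for $\gamma \in \pi_1^{\top}(Y,b)$. Since these span a $\bfC$-dense subspace of $\widehat{T}(\rmH_1^\dR(Y/\bfC))$ and both $c_\rmB\varphi_\infty$ and $c_\dR$ are $\bfC$-antilinear and continuous for the filtration topology, checking equality on this family suffices.

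First I would fix an $\bfR$-basis $(e_i)$ of $\rmH_1^\dR(Y/\bfR)$ and a Hall basis $\calH$ of $\widehat{L}(\rmH_1^\dR(Y/\bfC))$ built from $(e_i)$, so that each $h \in \calH$ lies in $\widehat{L}(\rmH_1^\dR(Y/\bfR))$. Expanding $\Omega = \sum_{h \in \calH} h \otimes \alpha_h$ with $\alpha_h \in A^1(X \log D)$, the hypothesis $(\mathrm{id} \otimes c)\circ F_\infty^*(\Omega) = \Omega$ from Proposition~\ref{prop2-1} translates into the individual identities $F_\infty^*(\alpha_h) = c(\alpha_h)$ for each $h$.

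By Lemma~\ref{lem2-3} and its regularized variant, $T_\gamma = \sum_{n \geq 0}\int_\gamma \Omega^n$. The central computation is
\begin{align*}
\varphi_\infty(T_\gamma) = T_{F_\infty\gamma} &= \sum_{n,\,h_*} h_1\cdots h_n \int_\gamma F_\infty^*(\alpha_{h_1})\cdots F_\infty^*(\alpha_{h_n})\\
&= \sum_{n,\,h_*} h_1\cdots h_n\,\overline{\int_\gamma \alpha_{h_1}\cdots\alpha_{h_n}} = c_\dR(T_\gamma),
\end{align*}
where the successive equalities use: the change of variables $(F_\infty\gamma)^* = \gamma^*\circ F_\infty^*$ on iterated integrals; the invariance identity $F_\infty^*(\alpha_h) = c(\alpha_h)$ combined with $\overline{\int_\gamma \beta_1\cdots\beta_n} = \int_\gamma c(\beta_1)\cdots c(\beta_n)$ for iterated integrals of $\bfC$-valued smooth forms; and the fact that each monomial $h_1\cdots h_n$ lies in the de Rham $\bfR$-structure $\widehat{T}(\rmH_1^\dR(Y/\bfR))$. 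On the other hand, since $[\gamma]$ is Betti-$\bfR$-rational, $c_\rmB(T_\gamma) = T_\gamma$, and applying this also to $F_\infty\gamma$ yields $c_\rmB\varphi_\infty(T_\gamma) = c_\rmB(T_{F_\infty\gamma}) = T_{F_\infty\gamma} = c_\dR(T_\gamma)$, as desired.

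The main subtlety lies in the tangential case, where the regularized iterated integral carries the prefactor $\exp(-\log\epsilon\cdot\mathrm{Res}_b(\Omega))$. I would verify that $F_\infty$ commutes with the regularization: choosing an $\bfR$-rational local parameter $q_c$ at the underlying real point $c$ of $b$, one has $F_\infty^*(dq_c/q_c) = dq_c/q_c$, so $\mathrm{Res}_b(\Omega) \in \widehat{T}(\rmH_1^\dR(Y/\bfR))$ and the regularizing prefactor is $c_\dR$-invariant; moreover $F_\infty(\gamma_\epsilon) = (F_\infty\gamma)_\epsilon$ with the same real parameter $\epsilon > 0$. With these observations, the computation above passes unchanged to the limit, completing the proof.
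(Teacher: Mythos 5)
Your proof is correct and follows essentially the same route as the paper's: the key identity $T(F_\infty\circ\gamma)=c_{\dR}(T(\gamma))$, obtained from $c\circ F_\infty^*(\Omega)=\Omega$ by change of variables in the iterated integrals, combined with the $c_{\rmB}$-invariance of the $T_\gamma$ and density plus antilinearity, with the tangential case reduced to the reality of $\mathrm{Res}_b(\Omega)$ exactly as in the paper. One small slip: $F_\infty^*(dq_c/q_c)=d\overline{q_c}/\overline{q_c}=c(dq_c/q_c)$, not $dq_c/q_c$; the intended (and correct) statement is $c\circ F_\infty^*(dq_c/q_c)=dq_c/q_c$, which together with the invariance of $\Omega$ still yields that $\mathrm{Res}_b(\Omega)$ lies in the real structure, so your regularization argument goes through unchanged.
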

\begin{proof}
	We prove only the case where the base point is tangential, since the case of an ordinary base point can be proved in a similar (and simpler) way.
	
	Suppose that $b$ is defined by an $F$-rational tangent vector at $d\in D(F)$. By the equation (\ref{eq4}),
	the residue $\mathrm{Res}_d\Omega$ also satisfies the identity
	\[
	F_\infty^*\mathrm{Res}_d\Omega=\overline{\mathrm{Res}_d\Omega},
	\]
	where $F_\infty$ acts as the complex conjugation on the space $T_d X\otimes_F\bfC$.
	Therefore, for each smooth path $\delta$ from $b$ to $b$, we have 
	\[
	\sum_{n=0}^\infty\int_{F_\infty\circ \delta}\Omega^n=\sum_{n=0}^\infty\int_\delta F_\infty^*\Omega^n=\sum_{n=0}^\infty\int_{\delta}c(\Omega)^n=c_{\dR}\left(\sum_{n=0}^\infty\int_{\delta}\Omega^n\right).
	\]
	This implies that the natural homomorphism
	\[
	T\colon \varprojlim_N\bfR[\pi_1^{\mathrm{top}}(Y,b)]/I_b^N\to \widehat{T}(\rmH_1^{\dR}(Y/F))\widehat{\otimes}\bfC;\quad \delta\mapsto \sum_{n=0}^\infty\int_{\delta}\Omega^n
	\]
	satisfies the identity
	\[
	T(F_\infty\circ\delta)=c_{\dR}(T(\delta)).
	\]
	Since the comparison isomorphism is a $\bfC$-linear extension of this map,
 the conclusion of the lemma follows.
\end{proof}

\subsection{Explicit computations at low length}\label{expl}Let $w,w'$ be elements of $ \rmH_1^{\dR}(Y/F)$.
In this subsection, we compute $D^\Omega_w(b,y)$ and $D^{\Omega}_{[w,w']}(b,y)$ explicitly.
The one-form $\Omega$ is always assumed to satisfy the conditions in Proposition~\ref{prop2-1}.

Fix a basis $\calB$ of $\rmH^1_{\dR}(Y/\bfR)$ as follows.
Let $\calB_h$ be a basis of $\rmH^0(X,\Omega^1_{X/F})\otimes_F \bfR$
and let $\calB_{ah}$ denote the set $\{\overline{\omega}\ |\ \omega\in \calB_h\}$.
Then the set $\calB_0:=\calB_h\coprod\calB_{ah}$ forms a basis of $\rmH^1_{\dR}(X/\bfR)$.
For any element $\omega\in\calB_0$,
we write $[\omega]$ for the cohomology class in $\rmH^1_{\dR}(X/\bfR)$
represented by $\omega$. 
Define $\calB_e\subset \rmH^1_{\dR}(Y/\bfR)$ to be a set of lifts of a basis of
\[
\Coker\left(\rmH^1_{\dR}(X/\bfR)\to \rmH^1_{\dR}(Y/\bfR)\right),
\]
consisting of eigenvectors of $\varphi_{\infty}$. Let $\calB_e^{\pm}$ denote
the set of $\pm$-eigenvectors in $\calB_e$.
Note that if every point of $D$ is defined over $\bfR$, then $\calB^+$ is empty.
Finally,  let $\calB$ denote the set $\calB_h\coprod\calB_{ah}\coprod\calB_e$.
We write  $\{[\omega]^\vee\ |\ [\omega]\in\calB\}$ for the dual basis of $\rmH_1^{\dR}(Y/\bfR)$ to $\calB$
and fix a Hall basis $\calH$ of the free Lie algebra generated by this dual basis.

Recall that $\Omega$ is defined as an infinite sum of one-forms:
\[
\Omega=\sum_{n=1}^\infty\Omega^{(n)},\quad \Omega^{(n)}\in \rmH_1^{\dR}(Y/F)^{\otimes n}\otimes_{F}A^1(X\log D).
\]
In this subsection, we suppose that $\Omega^{(1)}$ is of the form
\[
\Omega^{(1)}=\sum_{[w]\in \calB}[\omega]^\vee\otimes \omega.
\]
We write $\Omega^{(2)}$ as follows:
\[
\Omega^{(2)}=\sum_{[\omega],[\omega']\in\calB}[\omega]^\vee[\omega']^\vee\otimes \alpha_{\omega,\omega'}.
\]
When $\omega$ and $\omega'$ are of the same type so that $\omega\wedge\omega'=0$, then we \emph{always} take $\alpha_{\omega,\omega'}=0$.
\begin{remark}
	Since $d\Omega=\Omega\wedge\Omega$, we have that
	\[
	d\alpha_{{\omega},\omega'}=\omega\wedge\omega'.
	\]
	Therefore, the iterated integral
	\[
	\int_{\gamma}(\omega\omega'+\alpha_{{\omega},\omega'})
	\]
	is homotopy invariant. Indeed, the one-form $\omega\int_\gamma \omega'+\alpha_{{\omega},\omega'}$ is closed, since we have
	\[
	d\left(\omega\int \omega'\right)=-\omega\wedge \omega'.
	\]
\end{remark}
For any element $x$ of $\widehat{T}(\rmH_1^{\dR}(Y/\bfC))$ and for a non-negative integer $n$,
we define $x^{(n)}$ to be the element in $\rmH_1^{\dR}(Y/\bfC)^{\otimes n}$ such that
\[
x=\sum_{n=0}^\infty x^{(n)}.
\]
For example, $T(\gamma)$ can be written as
\[
T(\gamma)=1+T(\gamma)^{(1)}+T(\gamma)^{(2)}+\cdots.
\]
\begin{definition}For any non-negative integers $i,j$,
	we define the $\bfC$-linear homomorphism $\varphi_{\infty}^{[i,j]}$ to be the composition
	\[
	\rmH_1^{\dR}(Y/\bfC)^{\otimes i}\hookrightarrow 	\widehat{T}(\rmH_1^{\dR}(Y/\bfC))\xrightarrow{\varphi_\infty}\widehat{T}(\rmH_1^{\dR}(Y/\bfC))\ontomap\rmH_1^{\dR}(Y/\bfC)^{\otimes j},
	\]
	where the first map is the natural inclusion into the $i$th component, and the last map is the projection to the $j$th component.
	When $i=j$, we write $\varphi_{\infty}^{(i)}$ for $\varphi_{\infty}^{[i,i]}$.
	We also define the semi-linear homomorphisms $c_{\rmB}^{[i,j]}$ and $c_{\rmB}^{(i)}$ by replacing
	$\varphi_{\infty}$ with $c_{\rmB}$.
\end{definition}
\begin{remark}\label{rem1}Let $i$ and $j$ be non-negative integers.
	\begin{itemize}
		\item[(1)]By Proposition~\ref{prop2-5}, $c_{\rmB}^{[i,j]}$ and $\varphi_{\infty}^{[i,j]}$
		coincide on $\rmH_1^{\dR}(Y/\bfR)^{\otimes i}$, and $c_{\rmB}^{[i,j]}$ is the semi-linear extension of
		the restriction of $\varphi_{\infty}^{[i,j]}$ to the real structure above.
		\item[(2)]It is easily seen that $\varphi_{\infty}^{[i,j]}=0$ if $i>j$ or $i=0$.
		\item[(3)]The automorphism $c_{\rmB}^{(i)}$ on $\rmH_1^{\dR}(Y/\bfC)^{\otimes i}$
		coincides with the complex conjugation with respect to the real structure
		\[
		\rmH_1(Y(\bfC),\bfR)^{\otimes i}\intomap		\rmH_1^{\dR}(Y/\bfC)^{\otimes i}.
		\]
	\end{itemize}
\end{remark}
Let us start our computation with the length one case.
\begin{lemma}
	\label{lem2-4}We have the following identity
	:\[
	c_{\rmB}^{(1)}(T(\gamma)^{(1)})=\sum_{[\omega]\in \calB_0}[\omega]^\vee\int_\gamma\omega-\sum_{[\eta]\in\calB_e^-}[\eta]^\vee\overline{\int_\gamma\eta}+\sum_{[\eta]\in\calB_e^+}[\eta]^\vee\overline{\int_\gamma\eta}.
	\]
\end{lemma}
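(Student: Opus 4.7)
The plan is to apply Proposition~\ref{prop2-5} restricted to the length-one graded piece of $\widehat{T}(\rmH_1^{\dR}(Y/\bfC))$. Since $\varphi_\infty$ and $c_\rmB$ are both upper triangular in the length filtration by Remark~\ref{rem1}(2), the identity $c_\rmB\varphi_\infty = c_\dR$ specializes to $c_\rmB^{(1)}\circ\varphi_\infty^{(1)} = c_\dR^{(1)}$ on $\rmH_1^\dR(Y/\bfC)$, which I rewrite as $c_\rmB^{(1)} = c_\dR^{(1)}\circ \varphi_\infty^{(1)}$ using that $\varphi_\infty^{(1)}$ is an involution. The problem thus reduces to identifying each factor on the dual basis $\{[\omega]^\vee\}_{[\omega]\in\calB}$.

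To compute $\varphi_\infty^{(1)}$, Proposition~\ref{prop2-1} enforces $F_\infty^*\omega = \overline{\omega}$ for every form $\omega$ appearing in $\Omega^{(1)}$, and hence $F_\infty^*[\omega] = [\overline\omega]$ on classes, which swaps $\calB_h\leftrightarrow\calB_{ah}$. For $[\eta]\in\calB_e^{\pm}$, I replace the initial representative $\eta_0$ by $\tfrac12(\eta_0 \pm F_\infty^*\eta_0)$; this preserves the class and the symmetry from Proposition~\ref{prop2-1} while forcing $F_\infty^*\eta = \pm\eta$ on the nose, so that $F_\infty^*[\eta]=\pm[\eta]$. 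Dualizing via the perfect pairing $\rmH_1^\dR\times\rmH^1_\dR\to\bfC$ yields $\varphi_\infty^{(1)}([\omega]^\vee) = [\overline\omega]^\vee$ for $[\omega]\in\calB_0$ and $\varphi_\infty^{(1)}([\eta]^\vee) = \pm[\eta]^\vee$ for $[\eta]\in\calB_e^{\pm}$. To compute $c_\dR^{(1)}$, I claim each $[\omega]\in\calB$ lies in $\rmH^1_\dR(Y/\bfR)$, so the dual basis lies in $\rmH_1^\dR(Y/\bfR)$ and $c_\dR^{(1)}$ fixes every $[\omega]^\vee$. This is immediate for $\calB_h$ (algebraic over~$\bfR$) and $\calB_e$ (by construction); for $[\overline\omega]\in\calB_{ah}$ the dualized form of Proposition~\ref{prop2-5} on cohomology gives $c_\dR([\overline\omega]) = \varphi_\infty(c_\rmB([\overline\omega])) = \varphi_\infty([\omega]) = [\overline\omega]$.

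Combining, I apply $\varphi_\infty^{(1)}$ to $T(\gamma)^{(1)} = \sum_{[\omega]\in\calB}[\omega]^\vee\int_\gamma\omega$ and reindex the $\calB_0$-sum via the involution $\omega\leftrightarrow\overline\omega$ (using $\int_\gamma\overline\omega = \overline{\int_\gamma\omega}$) to obtain
\[
\varphi_\infty^{(1)}(T(\gamma)^{(1)}) = \sum_{[\omega]\in\calB_0}[\omega]^\vee\overline{\int_\gamma\omega} + \sum_{[\eta]\in\calB_e^+}[\eta]^\vee\int_\gamma\eta - \sum_{[\eta]\in\calB_e^-}[\eta]^\vee\int_\gamma\eta.
\]
Applying the $\bfC$-antilinear $c_\dR^{(1)}$ then conjugates every complex coefficient while fixing the dual-basis vectors: the $\calB_0$-coefficients return to $\int_\gamma\omega$ and the $\calB_e^{\pm}$-coefficients become $\overline{\int_\gamma\eta}$, producing exactly the right-hand side of the lemma. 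The hard part is the identification $\calB_{ah}\subset\rmH^1_\dR(Y/\bfR)$, which is not visible from the definition and crucially invokes the dualized Proposition~\ref{prop2-5}; once this is settled, the remainder is algebraic bookkeeping.
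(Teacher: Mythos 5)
Your route is essentially the paper's: specialize Proposition~\ref{prop2-5} to the length-one graded piece (for the vanishing of the cross terms of $c_{\rmB}$ you should quote Remark~\ref{rem1}(1) rather than (2), but that is cosmetic), identify $\varphi_{\infty}^{(1)}$ on the dual basis, and then conjugate the coefficients with the antilinear $c_{\dR}^{(1)}$, which fixes $\{[\omega]^\vee\}$ because $\calB$ is a basis of $\rmH^1_{\dR}(Y/\bfR)$; the final reindexing over $\calB_0$ and the sign bookkeeping over $\calB_e^{\pm}$ are correct and reproduce the paper's computation.

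The one step you should delete is the replacement of a representative $\eta_0$ of $[\eta]\in\calB_e^{\pm}$ by $\tfrac12(\eta_0\pm F_\infty^*\eta_0)$ to force $F_\infty^*\eta=\pm\eta$ ``on the nose''. It is unnecessary: $\calB_e^{\pm}$ is \emph{by definition} a set of $\pm$-eigenclasses of $\varphi_{\infty}$ in $\rmH^1_{\dR}(Y/\bfR)$, and the length-one computation only ever uses the action on classes (equivalently on the dual basis), never an identity of forms. It is also unworkable inside the framework: the forms entering $\Omega^{(1)}$ must lie in $A^1(X\log D)$, and $F_\infty^*$ alone does not preserve $A^1(X\log D)$ --- this is precisely the point of Lemma~\ref{lem2-2}, that only the composite $c\circ F_\infty^*$ does. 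A lift $\eta_0$ of a nonzero class of the cokernel necessarily has a nonzero residue, so $F_\infty^*\eta_0$ acquires singular parts of type $d\bar q_c/\bar q_c$ and the average leaves $A^1(X\log D)$; equivalently, imposing $F_\infty^*\eta=\pm\eta$ together with the symmetry $c\circ F_\infty^*\eta=\eta$ of Proposition~\ref{prop2-1} would force $\overline{\eta}=\pm\eta$, which no admissible representative with a logarithmic pole satisfies. Changing the representative would moreover change the forms $\eta$ whose integrals $\int_\gamma\eta$ appear in the statement and in the later sections. With that step removed and replaced by the definition of $\calB_e^{\pm}$, your argument coincides with the paper's proof.
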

\begin{proof}
	By the remark above, the following holds:
	\[
	\varphi_{\infty}^{(1)}([\omega])=\begin{cases}
		[\overline{\omega}]\quad &[\omega]\in \calB_0,\\
		\pm[\omega]&[\omega]\in \calB_e^{\pm}.
	\end{cases}
	\]
	Then, by Proposition~\ref{prop2-5}, we obtain
	\begin{equation}
		\begin{split}
			c_{\rmB}^{(1)}(T(\gamma)^{(1)})&=c_{\rmB}^{(1)}\left(\sum_{\omega\in\calB}[\omega]^\vee\int_\gamma\omega\right)=\sum_{\omega\in\calB}\varphi_\infty^{(1)}([\omega]^\vee)\overline{\int_\gamma\omega}\\
			&=\sum_{\omega\in \calB_0}[\overline{\omega}]^\vee\int_\gamma\overline{\omega}-\sum_{[\eta]\in\calB_e^-}[\eta]^\vee\overline{\int_\gamma\eta}+\sum_{[\eta]\in\calB_e^+}[\eta]^\vee\overline{\int_\gamma\eta}.
		\end{split}
	\end{equation}
	This completes the proof.
\end{proof}
Let $\varepsilon$ be an element of $ \{+,-\}$. Then, for any complex number $z$,
we define $\calR_{\varepsilon}(z)$ by
\begin{equation}
	\label{sign}
	\calR_{\varepsilon}(z)=z\varepsilon\overline z.
\end{equation}
For $\eta\in \calB_e^{\pm}$, set $\varepsilon(\eta):=\pm$.
\begin{proposition}[The length-one case]
	\label{prop2-3}The following equations hold for any smooth path $\gamma$ from $b$ to $y:$
	\[
	D^\Omega_{[\omega]}(b,y)=\begin{cases}
		0\qquad&[\omega]\in \calB_0,\\
		\calR_{-\varepsilon(\omega)}\left(\int_\gamma\omega\right)&[\omega]\in\calB_e.
	\end{cases}
	\]
\end{proposition}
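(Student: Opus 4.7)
The plan is to compute the tensor-degree-one part of $I^\Omega(b,y)$ directly via the formula in Proposition~\ref{prop2-2}, and then observe that the length-one coefficient of $\log(I^\Omega(b,y))$ in the Hall basis equals the degree-one component of $I^\Omega(b,y)$ itself. This last point is standard: writing $I^\Omega(b,y) = 1 + x$ with $x^{(0)}=0$, every term $x^n$ with $n\geq 2$ has tensor degree at least $n\geq 2$, so $(\log(1+x))^{(1)} = x^{(1)} = I^\Omega(b,y)^{(1)}$.

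First I would unwind the product
\[
I^\Omega(b,y) = T(\gamma)\, c_{\rmB}\!\left(T(\gamma)\right)^{-1},\qquad T(\gamma) := \sum_{n=0}^\infty\int_\gamma \Omega^n,
\]
supplied by Proposition~\ref{prop2-2}, in tensor degree one. Here it is crucial that $c_{\rmB}$ is upper-triangular for the tensor grading: Remark~\ref{rem1}(2) together with the semi-linear extension description in Remark~\ref{rem1}(1) gives $c_{\rmB}^{[i,j]}=0$ for $i>j$. Consequently, the degree-one component of $c_{\rmB}(T(\gamma))^{-1}$ receives contributions only from the degree-one component of $T(\gamma)$, and a direct expansion yields
\[
I^\Omega(b,y)^{(1)} = T(\gamma)^{(1)} - c_{\rmB}^{(1)}\!\left(T(\gamma)^{(1)}\right).
\]

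Next I would substitute the explicit formula from Lemma~\ref{lem2-4}. Since $T(\gamma)^{(1)} = \sum_{[\omega]\in\calB}[\omega]^\vee\int_\gamma\omega$, subtracting the lemma's expression for $c_{\rmB}^{(1)}(T(\gamma)^{(1)})$ causes the two $\calB_0$-terms $\sum_{[\omega]\in\calB_0}[\omega]^\vee\int_\gamma\omega$ to cancel, leaving only contributions indexed by $\calB_e$:
\[
I^\Omega(b,y)^{(1)} = \sum_{[\eta]\in\calB_e^+}[\eta]^\vee\!\left(\int_\gamma\eta-\overline{\int_\gamma\eta}\right) + \sum_{[\eta]\in\calB_e^-}[\eta]^\vee\!\left(\int_\gamma\eta+\overline{\int_\gamma\eta}\right).
\]
Recalling the convention $\calR_\varepsilon(z)=z+\varepsilon\bar z$ and $\varepsilon(\eta)=\pm$ for $[\eta]\in\calB_e^\pm$, the two sums combine into $\sum_{[\eta]\in\calB_e}[\eta]^\vee\calR_{-\varepsilon(\eta)}(\int_\gamma\eta)$. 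Reading off the coefficient of the length-one Hall basis element $[\omega]^\vee$ then gives the asserted formula, vanishing for $[\omega]\in\calB_0$ and equal to $\calR_{-\varepsilon(\omega)}(\int_\gamma\omega)$ for $[\omega]\in\calB_e$.

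The main obstacle is really just bookkeeping: one has to keep straight the semi-linearity of $c_{\rmB}$ and the identification of its fixed subspace (Betti real structure, not de Rham), and one must verify that the $\pm$ sign attached to $\calB_e^\pm$ propagates correctly through $\calR_{-\varepsilon(\eta)}$. (Path independence of the right-hand side, needed because $D^\Omega_{[\omega]}(b,y)$ is \emph{a priori} a function of $y$ alone, is automatic from the single-valuedness noted after Definition~\ref{defofGBWfct} and the same upper-triangular argument applied to a loop at $b$.)
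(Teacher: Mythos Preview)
Your proposal is correct and follows essentially the same approach as the paper: both compute the degree-one component of $I^\Omega(b,y)=T(\gamma)c_{\rmB}(T(\gamma))^{-1}$ from Proposition~\ref{prop2-2}, invoke the upper-triangularity in Remark~\ref{rem1}(2) to reduce to $T(\gamma)^{(1)}-c_{\rmB}^{(1)}(T(\gamma)^{(1)})$, and then apply Lemma~\ref{lem2-4} to read off the coefficients. Your write-up is somewhat more detailed (making explicit why $\log$ agrees with the identity in degree one and spelling out the sign bookkeeping), but the argument is the same.
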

\begin{proof}According to Proposition~\ref{prop2-2}, we have
	\[
	D^\Omega_{[\omega]^\vee}(b,y)=\la[\omega],T(\gamma)^{(1)}\ra-\la[\omega],c_{\rmB}^{(1)}T(\gamma)^{(1)}\ra
	\]
	(see also Remark~\ref{rem1} (2)).
	Therefore, the conclusion follows from Lemma~\ref{lem2-4}.
\end{proof}
\begin{example}
	\label{ex4}We use the same notation as in Example~\ref{ex1} and Example~\ref{ex3}.
	Note that we have $\varphi_\infty(e_i)=-e_i$.
	Therefore, by the proposition above, we have
	\[
	D^\Omega_{e_i}(b,y)=2\log|i-y|
	\]
	for $i=0,1$.
	Those functions are indeed a single-valued function on $Y(\bfC)$.
\end{example}
Next, we compute the length-two case.
For two elements $\eta_1$ and $\eta_2$ of $\calB_e$, define the sign $\varepsilon(\eta_1,\eta_2)\in \{+,-\}$ to be the product $\varepsilon(\eta_1)\varepsilon(\eta_2)$.
\begin{lemma}
	\label{lem2-5}Let $\gamma$ be a smooth path from $b$ to $y$.
	Then we have the following identity:
	\begin{multline*}
		c_{\rmB}^{(2)}(T(\gamma)^{(2)})=\sum_{[\omega_1],[\omega_2]\in \calB_0}[\omega_1]^\vee[\omega_2]^\vee\int_\gamma(\omega_1\omega_2+\overline{\alpha_{\overline{\omega_1},\overline{\omega_2}}})+\sum_{[\eta_1],[\eta_2]\in \calB_e}\varepsilon(\eta_1,\eta_2)[\eta_1]^\vee[\eta_2]^\vee\overline{\int_\gamma\eta_1\eta_2}\\
		+\sum_{[\omega]\in\calB_0,[\eta]\in \calB_e}\varepsilon(\eta)\left([\overline{\omega}]^\vee[\eta]^\vee\overline{\int_\gamma(\omega\eta+\alpha_{\omega,\eta})}+[\eta]^\vee[\overline{\omega}]^\vee\overline{\int_\gamma(\eta\omega+\alpha_{\eta,\omega})}\right)
	\end{multline*}
\end{lemma}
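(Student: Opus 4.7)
\emph{Proof sketch.} The plan is to mirror the argument of Lemma~\ref{lem2-4} at tensor length two. First, I would expand $T(\gamma)^{(2)}$ explicitly. Using $\Omega = \Omega^{(1)} + \Omega^{(2)} + \cdots$ together with formula (\ref{eq9}), the length-$2$ contribution to $\sum_n\int_\gamma\Omega^n$ collects the pieces coming from $\int_\gamma\Omega^{(1)}\Omega^{(1)}$ and from $\int_\gamma\Omega^{(2)}$, giving
\[
T(\gamma)^{(2)} = \sum_{[\omega_1],[\omega_2]\in\calB}[\omega_1]^\vee[\omega_2]^\vee \int_\gamma\bigl(\omega_1\omega_2 + \alpha_{\omega_1,\omega_2}\bigr).
\]

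Next, I would apply $c_\rmB^{(2)}$ by exploiting that $c_\rmB$ is a $\bfC$-semilinear algebra automorphism of $\widehat{T}(\rmH_1^{\dR}(Y/\bfC))$ (being the complex conjugation with respect to a real group-ring structure). Semilinearity converts the scalar factor $\int_\gamma(\omega_1\omega_2+\alpha_{\omega_1,\omega_2})$ to its complex conjugate, while multiplicativity combined with the vanishing $c_\rmB^{[1,0]}=0$ from Remark~\ref{rem1}(2) yields the factorization
\[
c_\rmB^{(2)}\bigl([\omega_1]^\vee[\omega_2]^\vee\bigr) = c_\rmB^{(1)}([\omega_1]^\vee)\,c_\rmB^{(1)}([\omega_2]^\vee).
\]
By Remark~\ref{rem1}(1) this equals $\varphi_\infty^{(1)}([\omega_1]^\vee)\,\varphi_\infty^{(1)}([\omega_2]^\vee)$, where from the computation already carried out in the proof of Lemma~\ref{lem2-4} one has $\varphi_\infty^{(1)}([\omega]^\vee) = [\overline\omega]^\vee$ for $\omega\in\calB_0$ and $\varphi_\infty^{(1)}([\eta]^\vee) = \varepsilon(\eta)[\eta]^\vee$ for $\eta\in\calB_e$.

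Finally, I would split the double sum into the four cases $(\omega_1,\omega_2)\in(\calB_0\sqcup\calB_e)^2$. In the $\calB_0\times\calB_0$ case, I would reindex $(\omega_1,\omega_2)\to(\overline{\omega_1},\overline{\omega_2})$, which is legitimate because $\calB_0$ is stable under $\omega\mapsto\overline\omega$, and use the compatibility $\overline{\int_\gamma\beta_1\cdots\beta_n} = \int_\gamma\overline{\beta_1}\cdots\overline{\beta_n}$ of (regularized) iterated integrals with complex conjugation to recover $\int_\gamma(\omega_1\omega_2 + \overline{\alpha_{\overline{\omega_1},\overline{\omega_2}}})$; this produces the first sum. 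In the $\calB_e\times\calB_e$ case, both $\eta_i$ are chosen as $(1,0)$-logarithmic forms, so $\eta_1\wedge\eta_2=0$ forces $\alpha_{\eta_1,\eta_2}=0$ by convention and only the term $\overline{\int_\gamma\eta_1\eta_2}$ survives, carrying the sign $\varepsilon(\eta_1)\varepsilon(\eta_2)=\varepsilon(\eta_1,\eta_2)$, which is the second sum. The two mixed cases $(\omega,\eta)$ and $(\eta,\omega)$ give the third sum directly.

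The main delicate step will be the reindexing in the $\calB_0\times\calB_0$ case: one must apply the substitution $\omega_i\mapsto\overline{\omega_i}$ simultaneously to the tensor factor and to the integrand so that the $\alpha$-contribution is recorded as $\overline{\alpha_{\overline{\omega_1},\overline{\omega_2}}}$ rather than $\alpha_{\omega_1,\omega_2}$. Apart from this bookkeeping (and the observation that the convention $\alpha_{\omega,\omega'}=0$ for $\omega\wedge\omega'=0$ automatically kills terms of pure Hodge type, so the nonzero $\overline\alpha$-contributions come only from pairs with one $(1,0)$ and one $(0,1)$ representative), the argument is a direct length-$2$ analogue of Lemma~\ref{lem2-4}.
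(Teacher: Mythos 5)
Your proposal is correct and follows essentially the same route as the paper: expand $T(\gamma)^{(2)}$ via Lemma~\ref{lem2-3} into the terms $\int_\gamma(\omega_1\omega_2+\alpha_{\omega_1,\omega_2})$ over pairs in $\calB$, then apply $c_{\rmB}^{(2)}$ exactly as in Lemma~\ref{lem2-4}, using semilinearity over the de Rham real structure and $\varphi_\infty^{(1)}([\omega]^\vee)=[\overline{\omega}]^\vee$ on $\calB_0$, $\varepsilon(\eta)[\eta]^\vee$ on $\calB_e$, followed by the reindexing $\omega_i\mapsto\overline{\omega_i}$ in the $\calB_0\times\calB_0$ block. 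Your derivation of the factorization $c_{\rmB}^{(2)}([\omega_1]^\vee[\omega_2]^\vee)=c_{\rmB}^{(1)}([\omega_1]^\vee)c_{\rmB}^{(1)}([\omega_2]^\vee)$ from multiplicativity and $c_{\rmB}^{[1,0]}=0$ is a valid substitute for the paper's direct appeal to Remark~\ref{rem1}(3), so no gap remains.
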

\begin{proof}
	By the lemma~\ref{lem2-3}, we have
	\begin{multline}
		T(\gamma)^{(2)}=\sum_{[\omega_1],[\omega_2]\in \calB_0}[\omega_1]^\vee[\omega_2]^\vee\int_\gamma(\omega_1\omega_2+\alpha_{\omega_1,\omega_2})+\sum_{[\eta_1],[\eta_2]\in \calB_e}[\eta_1]^\vee[\eta_2]^\vee\int_\gamma\eta_1\eta_2\\
		+\sum_{[\omega]\in\calB_0,[\eta]\in \calB_e}\left([\omega]^\vee[\eta]^\vee{\int_\gamma(\omega\eta+\alpha_{\omega,\eta})}+[\eta]^\vee[{\omega}]^\vee{\int_\gamma(\eta\omega+\alpha_{\eta,\omega})}\right).
	\end{multline}
	Then the conclusion follows in the same manner as the proof of Lemma~\ref{lem2-4}.
\end{proof}
\begin{lemma}
	\label{lem2-6}Let $\xi$ and $\eta$ be elements of $\widehat{T}(\rmH_1^{\dR}(Y/\bfC))$, and let
	$[\omega]$ and $[\omega']$ be elements of $\calB$.
	Suppose that both $\xi$ and $\eta$ are congruent to $1$ modulo the augmentation ideal $\prod_{n\geq 1}\rmH_1^{\dR}(Y/\bfC)$ of $\widehat{T}(\rmH_1^{\dR}(Y/\bfC))$.
	Then the following identity holds:
	\[
	\la[\omega]\otimes[\omega'],\xi\eta^{-1}\ra=\la[\omega]\otimes[\omega'],\xi-\eta\ra-\la[\omega],\xi-\eta\ra\la[\omega'],\eta\ra.
	\]
\end{lemma}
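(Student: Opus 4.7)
The proof is a direct computation based on extracting the degree-two part of $\xi\eta^{-1}$ in the graded algebra $\widehat{T}(\rmH_1^{\dR}(Y/\bfC))$, and the main observation is that the pairing $\la[\omega]\otimes[\omega'],\cdot\ra$ only sees the degree-two component of its argument.

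First I would write $\eta = 1+\eta'$ with $\eta' = \eta^{(1)}+\eta^{(2)}+\cdots$ lying in the augmentation ideal $J:=\prod_{n\geq 1}\rmH_1^{\dR}(Y/\bfC)^{\otimes n}$, and expand
\[
\eta^{-1}=\sum_{k\geq 0}(-\eta')^{k}=1-\eta^{(1)}-\eta^{(2)}+\eta^{(1)}\otimes\eta^{(1)}\pmod{J^{3}},
\]
where the congruence is in $\widehat{T}(\rmH_1^{\dR}(Y/\bfC))/J^{3}$. Similarly I would write $\xi=1+\xi^{(1)}+\xi^{(2)}\pmod{J^{3}}$.

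Next I would multiply and collect the degree-two part, using that the multiplication in the tensor algebra respects the grading. A short computation gives
\[
(\xi\eta^{-1})^{(2)}=\xi^{(2)}-\eta^{(2)}-\xi^{(1)}\otimes\eta^{(1)}+\eta^{(1)}\otimes\eta^{(1)}=(\xi-\eta)^{(2)}-(\xi-\eta)^{(1)}\otimes\eta^{(1)}.
\]

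Finally, since $\la[\omega]\otimes[\omega'],\cdot\ra$ vanishes on $\rmH_1^{\dR}(Y/\bfC)^{\otimes n}$ for $n\neq 2$, I would pair both sides with $[\omega]\otimes[\omega']$ and use the compatibility of the pairing with the tensor product decomposition to obtain
\[
\la[\omega]\otimes[\omega'],\xi\eta^{-1}\ra=\la[\omega]\otimes[\omega'],\xi-\eta\ra-\la[\omega],\xi-\eta\ra\la[\omega'],\eta\ra,
\]
which is the claimed identity. There is no real obstacle here; the only thing one must be careful about is tracking the order of the tensor factors consistently with the chosen convention for multiplication in $\widehat{T}(\rmH_1^{\dR}(Y/\bfC))$, so that $\xi^{(1)}\otimes\eta^{(1)}$ pairs against $[\omega]$ on the left and $[\omega']$ on the right, matching the right-hand side of the stated formula.
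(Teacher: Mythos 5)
Your proposal is correct and follows essentially the same route as the paper: both proofs amount to expanding $\eta^{-1}$ in the completed tensor algebra modulo degree three and reading off the degree-two component of $\xi\eta^{-1}$ via the graded pairing against $[\omega]\otimes[\omega']$. The paper merely organizes the same computation slightly differently, first splitting $\la[\omega]\otimes[\omega'],\xi\eta^{-1}\ra$ into three terms and then evaluating $\la[\omega'],\eta^{-1}\ra$ and $\la[\omega]\otimes[\omega'],\eta^{-1}\ra$ in terms of $\eta$.
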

\begin{proof}It is easily checked that the identity
	\[
	\la[\omega]\otimes[\omega'],\xi\eta^{-1}\ra=\la[\omega]\otimes[\omega'],\xi\ra+\la[\omega],\xi\ra\la[\omega'],\eta^{-1}\ra+\la[\omega]\otimes[\omega'],\eta^{-1}\ra
	\]
	holds under our assumption on $\xi$ and $\eta$.
	Moreover, we have the identities
	\[
	\la[\omega'],\eta^{-1}\ra=-\la[\omega'],\eta\ra
	\]
	and
	\[
	\la [\omega]\otimes[\omega'],\eta^{-1}\ra=\la [\omega],\eta\ra\la [\omega'],\eta\ra-\la [\omega]\otimes [\omega'],\eta\ra.
	\]
	Combining these gives the desired result.
\end{proof}
\begin{proposition}
	\label{prop2-4}We use the same notation as above.
	Then we have the following identity$:$
	\begin{multline}
		\la[\omega]\otimes[\omega'],I^\Omega(b,y)\ra=\int_\gamma(\omega\omega'+\alpha_{\omega,\omega'})-\la[\omega]\otimes[\omega]',\varphi_{\infty}^{[1,2]}c_{\dR}(T(\gamma)^{(1)})\ra\\
		-\begin{cases}
			\int_\gamma(\omega\omega'+\overline{\alpha_{\overline{\omega},\overline{\omega}'}})\hspace{2.5cm}&[\omega],[\omega']\in\calB_0,\\
			\varepsilon(\omega,\omega')\overline{\int_\gamma \omega\omega'}+\varepsilon(\omega')D^\Omega_{[\omega]}(b,y)
			\overline{\int_\gamma\omega'}&[\omega],[\omega']\in\calB_e,\\
			\varepsilon(\omega')\overline{\int_\gamma(\overline{\omega}\omega'+\alpha_{\overline{\omega},\omega'})}&[\omega]\in \calB_0,\ [\omega']\in \calB_e,\\
			\varepsilon(\omega)\overline{\int_\gamma(\omega\overline{\omega'}+\alpha_{\omega,\overline{\omega}'})}+D^\Omega_{[\omega]}(b,y)
			{\int_\gamma\omega'}&[\omega]\in \calB_e,\ [\omega']\in \calB_0.
		\end{cases}
	\end{multline}
\end{proposition}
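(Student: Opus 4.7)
The starting point will be Proposition~\ref{prop2-2}, which writes $I^\Omega(b,y) = T(\gamma) \cdot c_{\rmB}(T(\gamma))^{-1}$ for any smooth path $\gamma$ from $b$ to $y$, where $T(\gamma) = \sum_{n \geq 0} \int_\gamma \Omega^n$. Setting $\xi := T(\gamma)$ and $\eta := c_{\rmB}(T(\gamma))$, both of which are group-like and hence congruent to $1$ modulo the augmentation ideal, Lemma~\ref{lem2-6} reduces the pairing $\langle [\omega] \otimes [\omega'], I^\Omega(b,y) \rangle$ to
\[
\langle [\omega] \otimes [\omega'], \xi - \eta \rangle - \langle [\omega], \xi - \eta \rangle \langle [\omega'], \eta \rangle.
\]

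The plan is then to compute each piece separately. The contribution from $\langle [\omega] \otimes [\omega'], \xi \rangle$ is the homotopy-invariant iterated integral $\int_\gamma(\omega \omega' + \alpha_{\omega, \omega'})$, by Lemma~\ref{lem2-3} applied to the explicit form of $\Omega$ introduced in Subsection~\ref{expl}. For the degree-two part of $\eta = c_{\rmB}(T(\gamma))$, I would decompose
\[
c_{\rmB}(T(\gamma))^{(2)} = c_{\rmB}^{(2)}(T(\gamma)^{(2)}) + c_{\rmB}^{[1,2]}(T(\gamma)^{(1)}),
\]
since $c_{\rmB}(1) = 1$ contributes nothing at degree two. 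Lemma~\ref{lem2-5} evaluates $c_{\rmB}^{(2)}(T(\gamma)^{(2)})$ explicitly, and pairing it with $[\omega] \otimes [\omega']$ produces the four case-dependent integrals in the statement.

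The main conceptual step is to rewrite $c_{\rmB}^{[1,2]}$ in terms of $\varphi_{\infty}$. Since $c_{\rmB}$, $c_{\dR}$, and $\varphi_{\infty}$ are all involutions, the identity $c_{\rmB}\varphi_{\infty} = c_{\dR}$ of Proposition~\ref{prop2-5} may be rewritten as $c_{\rmB} = \varphi_{\infty} c_{\dR}$; combined with the fact that $c_{\dR}$ preserves the tensor grading on $\widehat{T}(\rmH_1^{\dR}(Y/\bfC))$, this yields $c_{\rmB}^{[1,2]}(v) = \varphi_{\infty}^{[1,2]}(c_{\dR}(v))$ for every $v$ of degree one. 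Substituting $v = T(\gamma)^{(1)}$ produces the term $-\langle [\omega] \otimes [\omega'], \varphi_{\infty}^{[1,2]} c_{\dR}(T(\gamma)^{(1)}) \rangle$ appearing in the proposition.

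For the correction $-\langle [\omega], \xi - \eta \rangle \langle [\omega'], \eta \rangle$, I would identify $\langle [\omega], \xi - \eta \rangle$ as the length-one quantity $D^\Omega_{[\omega]}(b,y)$ via Proposition~\ref{prop2-3} (which vanishes when $[\omega] \in \calB_0$ and equals $\calR_{-\varepsilon(\omega)}(\int_\gamma \omega)$ when $[\omega] \in \calB_e$), while Lemma~\ref{lem2-4} gives $\langle [\omega'], \eta \rangle = \int_\gamma \omega'$ for $[\omega'] \in \calB_0$ and $\varepsilon(\omega') \overline{\int_\gamma \omega'}$ for $[\omega'] \in \calB_e$. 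Assembling the four cases by direct substitution reproduces the stated formula. The bulk of the remaining work is delicate but routine bookkeeping of signs and conjugation patterns across the four cases; the principal obstacle is the identification $c_{\rmB}^{[1,2]} = \varphi_{\infty}^{[1,2]} \circ c_{\dR}$, which hinges on the commutativity of $\varphi_{\infty}$ and $c_{\dR}$ extracted from the involution relations in Proposition~\ref{prop2-5}.
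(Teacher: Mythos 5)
Your proposal is correct and follows essentially the same route as the paper's proof: it reduces via Proposition~\ref{prop2-2} and Lemma~\ref{lem2-6} to the difference $T(\gamma)-c_{\rmB}(T(\gamma))$ plus the length-one correction, evaluates the pieces with Lemma~\ref{lem2-4}, Lemma~\ref{lem2-5}, and Proposition~\ref{prop2-3}, and handles the cross term by the identification $c_{\rmB}^{[1,2]}=\varphi_{\infty}^{[1,2]}\circ c_{\dR}$ coming from Proposition~\ref{prop2-5} (stated in the paper as Remark~\ref{rem1}(1)). Your derivation of that identification from the involution relations is valid, so no gap remains.
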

\begin{proof}
	According to Proposition~\ref{prop2-3} and Lemma~\ref{lem2-6}, we have the following equation:
	\begin{multline}
		\label{eq7}
		\la[\omega]\otimes[\omega'],I^\Omega(b,y)\ra=\la[\omega]\otimes[\omega'],T(\gamma)-c_{\rmB}(T(\gamma))\ra\\-\la[\omega],T(\gamma)-c_{\rmB}(T(\gamma))\ra\la[\omega'],c_{\rmB}(T(\gamma))\ra.
	\end{multline}
	Since $\la[\omega],T(\gamma)-c_{\rmB}(T(\gamma))\ra=D^\Omega_{[\omega]}(b,y)$, the second term in (\ref{eq7}) can be computed as follows:
	\begin{equation}
		\label{eq8}
		\la[\omega],T(\gamma)-c_{\rmB}(T(\gamma))\ra\la[\omega'],c_{\rmB}(T(\gamma))\ra=D^\Omega_{[\omega]}(b,y)\times\begin{cases}
			{\int_\gamma\omega'}\qquad &[\omega']\in\calB_0,\\
			\varepsilon(\omega')\overline{\int_\gamma\omega'} &[\omega']\in\calB_e.
		\end{cases}
	\end{equation}
	Moreover, since the identity
	\[
	\la[\omega]\otimes[\omega]',c_{\rmB}(T(\gamma))=\la[\omega]\otimes[\omega]',c_{\rmB}^{(2)}(T(\gamma)^{(2)})\ra+\la[\omega]\otimes[\omega]',\varphi_{\infty}^{[1,2]}(T(\gamma)^{(1)})\ra
	\]
	holds, the first term in~\ref{eq7} can be evaluated using Lemma~\ref{lem2-5} as follows:
	\begin{equation}
		\begin{split}
			\la[\omega]\otimes[\omega]',T(\gamma)-c_{\rmB}(T(\gamma))\ra&=\int_\gamma(\omega\omega'+\alpha_{\omega,\omega'})-\la[\omega]\otimes[\omega]',c_{\rmB}(T(\gamma))\ra\\
			&=\int_\gamma(\omega\omega'+\alpha_{\omega,\omega'})-\la[\omega]\otimes[\omega]',\varphi_{\infty}^{[1,2]}c_{\dR}(T(\gamma)^{(1)})\ra\\
			&-\begin{cases}
				\int_\gamma(\omega\omega'+\overline{\alpha_{\overline{\omega},\overline{\omega}'}})\qquad&[\omega],[\omega']\in\calB_0,\\
				\varepsilon(\omega,\omega')\overline{\int_\gamma \omega\omega'}&[\omega],[\omega']\in\calB_e,\\
				\varepsilon(\omega')\overline{\int_\gamma(\overline{\omega}\omega'+\alpha_{\overline{\omega},\omega'})}&[\omega]\in \calB_0,\ [\omega']\in \calB_e,\\
				\varepsilon(\omega)\overline{\int_\gamma(\omega\overline{\omega'}+\alpha_{\omega,\overline{\omega}})}&[\omega]\in \calB_e,\ [\omega']\in \calB_0.
			\end{cases}
		\end{split}
	\end{equation}
	Combining these computations completes the proof of the proposition.
\end{proof}
\begin{theorem}[The length-two case]
	\label{thm2-2}Let $[\omega],[\omega']$ be elements of $\calB$. Then the following identity holds$:$
	\begin{multline*}
		D_{[[\omega]^\vee,[\omega']^\vee]}^{\Omega}(b,y)=-\la[\omega]\otimes[\omega'],\varphi_{\infty}^{[1,2]}c_{\dR}(T(\gamma)^{(1)})\ra\\
		+\begin{cases}
			\int_{\gamma}(\alpha_{{\omega},\omega'}-\overline{\alpha_{\overline{\omega},\overline{\omega}'}})\hspace{6cm}&[\omega],[\omega']\in\calB_0,\\
			\calR_{-\varepsilon(\omega,\omega')}\left(\int_\gamma \omega\omega'\right)-\frac{1}{2}\calR_{-\varepsilon(\omega)}\left(\int_\gamma\omega\right)\calR_{\varepsilon(\omega')}\left(\int_\gamma\omega'\right)&[\omega],[\omega']\in\calB_e,\\
			\int_\gamma(\omega\calR_{-\varepsilon(\omega')}(\omega')-\epsilon(\omega')\overline{\alpha_{\overline{\omega},\omega'}}))&[\omega]\in \calB_h,\ [\omega']\in \calB_e,\\
			\int_\gamma(\overline{\omega}\calR_{-\varepsilon(\omega')}(\omega')+\alpha_{{\omega},\omega'}))&[\omega]\in \calB_{ah},\ [\omega']\in \calB_e.
		\end{cases}
	\end{multline*}
\end{theorem}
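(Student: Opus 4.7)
The plan is to extract the Lie-bracket coefficient $D^\Omega_{[[\omega]^\vee,[\omega']^\vee]}(b,y)$ from the length-two tensor coefficients of $I^\Omega(b,y)$ already computed in Proposition~\ref{prop2-4}. The key algebraic input is that $I^\Omega(b,y) = c_{\Omega,\rmB,y}^b(1)$ is a \emph{group-like} element: the complex conjugation $c_{\Omega,\rmB,y}^b$ is an anti-$\bfC$-linear Hopf-algebra automorphism (induced from the complex conjugation on the Tannakian fiber functor $\omega_y^{\rmB}$), and so carries the group-like element $1$ to a group-like element. Consequently $L := \log I^\Omega(b,y)$ is Lie-like, and its Hall-basis expansion $L=\sum_{h\in\calH}D^\Omega_h(b,y) h$ restricts in degree two to
\begin{equation*}
	L^{(2)} = \sum_{[\omega]>[\omega']} D^\Omega_{[[\omega]^\vee,[\omega']^\vee]}(b,y)\big([\omega]^\vee\otimes[\omega']^\vee - [\omega']^\vee\otimes[\omega]^\vee\big).
\end{equation*}
Expanding $(I^\Omega)^{(2)} = L^{(2)} + \tfrac{1}{2}(L^{(1)})^2$ and antisymmetrizing in $[\omega]\leftrightarrow[\omega']$ kills the symmetric correction $\tfrac{1}{2}(L^{(1)})^2$, yielding the identity
\begin{equation*}
	D^\Omega_{[[\omega]^\vee,[\omega']^\vee]}(b,y) = \tfrac{1}{2}\big(\la[\omega]\otimes[\omega'], I^\Omega(b,y)\ra - \la[\omega']\otimes[\omega], I^\Omega(b,y)\ra\big).
\end{equation*}

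Next I would apply Proposition~\ref{prop2-4} to both tensor coefficients and subtract. The term involving $\varphi_\infty^{[1,2]}c_{\dR}(T(\gamma)^{(1)})$ antisymmetrizes to exactly $-\la[\omega]\otimes[\omega'], \varphi_\infty^{[1,2]}c_{\dR}(T(\gamma)^{(1)})\ra$. To justify this, recall from Proposition~\ref{prop2-5} that $\varphi_\infty = c_{\rmB}^{-1}c_{\dR}$, whence $\varphi_\infty$ is a $\bfC$-linear Hopf-algebra automorphism of $\widehat{T}(\rmH_1^{\dR}(Y/\bfC))$ and thus preserves primitive (Lie-like) elements. For any $v\in\rmH_1^{\dR}(Y/\bfC)$ the image $\varphi_\infty(v)$ is Lie-like, so $\varphi_\infty^{[1,2]}(v)$ lies in the degree-two part of the free Lie algebra, which coincides with $\wedge^2\rmH_1^{\dR}(Y/\bfC)\subset\rmH_1^{\dR}(Y/\bfC)^{\otimes 2}$; in particular, $\la[\omega']\otimes[\omega], \varphi_\infty^{[1,2]}(v)\ra = -\la[\omega]\otimes[\omega'], \varphi_\infty^{[1,2]}(v)\ra$.

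It then remains to simplify the antisymmetrized path contributions case by case. In the $\calB_0\times\calB_0$ case I would arrange $\alpha_{\omega',\omega} = -\alpha_{\omega,\omega'}$, which is permissible since $d(\alpha_{\omega,\omega'}+\alpha_{\omega',\omega})=\omega\wedge\omega'+\omega'\wedge\omega=0$ leaves the inductive construction of $\Omega$ enough freedom for this symmetric choice; after cancelling the $\int_\gamma\omega\omega'$ contributions from Proposition~\ref{prop2-4}, exactly $\int_\gamma(\alpha_{\omega,\omega'}-\overline{\alpha_{\bar\omega,\bar\omega'}})$ remains. The mixed cases of the theorem ($[\omega]\in\calB_h$ or $\calB_{ah}$, $[\omega']\in\calB_e$) arise by combining Proposition~\ref{prop2-4}'s two mixed cases $([\omega]\in\calB_0,[\omega']\in\calB_e)$ and $([\omega]\in\calB_e,[\omega']\in\calB_0)$ via the antisymmetrization, with the split into $\calB_h$ versus $\calB_{ah}$ governed by whether $\overline{\omega}$ pairs as a holomorphic or antiholomorphic form against $\calR_{-\varepsilon(\omega')}(\omega')$.

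The main obstacle I anticipate is the $\calB_e\times\calB_e$ case, where Proposition~\ref{prop2-4} produces an asymmetric cross-term $\varepsilon(\omega')D^\Omega_{[\omega]}(b,y)\overline{\int_\gamma\omega'}$ that does not immediately match the claimed product $\tfrac12\calR_{-\varepsilon(\omega)}(\int_\gamma\omega)\calR_{\varepsilon(\omega')}(\int_\gamma\omega')$. To reconcile them I would substitute the length-one identity $D^\Omega_{[\omega]}(b,y)=\calR_{-\varepsilon(\omega)}(\int_\gamma\omega)$ from Proposition~\ref{prop2-3}, use the shuffle relation $\int_\gamma\omega\cdot\int_\gamma\omega' = \int_\gamma\omega\omega'+\int_\gamma\omega'\omega$ to rewrite $\int_\gamma\omega\omega' - \int_\gamma\omega'\omega$ as $2\int_\gamma\omega\omega'-\int_\gamma\omega\cdot\int_\gamma\omega'$, expand each $\calR_\varepsilon(z)\calR_{\varepsilon'}(z') = (z+\varepsilon\overline{z})(z'+\varepsilon'\overline{z'})$, and verify that the resulting conjugate and sign patterns collapse into $\calR_{-\varepsilon(\omega,\omega')}(\int_\gamma\omega\omega')-\tfrac12\calR_{-\varepsilon(\omega)}(\int_\gamma\omega)\calR_{\varepsilon(\omega')}(\int_\gamma\omega')$. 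Careful tracking of the individual signs $\varepsilon(\omega)$ and $\varepsilon(\omega')$ (as opposed to only the product $\varepsilon(\omega,\omega')$) is what produces the asymmetric sign convention in the cross-term.
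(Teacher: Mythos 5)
Your core reduction is the same as the paper's: use that $I^\Omega(b,y)$ is group-like (the paper gets this from Proposition~\ref{prop2-2}, writing $I^\Omega(b,y)=T(\gamma)c_{\rmB}(T(\gamma))^{-1}$ as a product of group-like elements, which is also what makes Definition~\ref{defofGBWfct} meaningful), extract the degree-two coefficient of $\log I^\Omega(b,y)$, and feed in Proposition~\ref{prop2-4} together with Proposition~\ref{prop2-3}. The difference is how the bracket coefficient is read off. The paper does \emph{not} antisymmetrize: it uses $D^{\Omega}_{[[\omega]^\vee,[\omega']^\vee]}(b,y)=\la[\omega]\otimes[\omega'],I^\Omega(b,y)\ra-\tfrac12 D^\Omega_{[\omega]^\vee}(b,y)D^\Omega_{[\omega']^\vee}(b,y)$ (the coefficient of $[\omega]^\vee[\omega']^\vee$ in $(I^\Omega-1)-\tfrac12(I^\Omega-1)^2$), notes the product term vanishes unless both classes lie in $\calB_e$, and applies Proposition~\ref{prop2-4} once; your ``main obstacle'' in the $\calB_e\times\calB_e$ case is then a one-line collapse, $\varepsilon'\overline{z}+\tfrac12\calR_{-\varepsilon'}(z)=\tfrac12\calR_{\varepsilon'}(z)$ with $z=\int_\gamma\omega'$, with no shuffle manipulations needed.

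The antisymmetrization is where your argument, as written, does not prove the stated theorem. In the $\calB_0\times\calB_0$ case it yields $\tfrac12\int_\gamma\bigl((\alpha_{\omega,\omega'}-\alpha_{\omega',\omega})-(\overline{\alpha_{\overline{\omega},\overline{\omega}'}}-\overline{\alpha_{\overline{\omega}',\overline{\omega}}})\bigr)$, and in the mixed cases it drags in the swapped-index forms $\alpha_{\omega',\overline{\omega}}$ which the theorem never mentions; to remove these you impose $\alpha_{\omega',\omega}=-\alpha_{\omega,\omega'}$. That normalization is not among the hypotheses (only the conventions of Subsection~\ref{expl} and Proposition~\ref{prop2-1} are assumed), and changing $\Omega$ changes the trivialization and hence the functions $D^\Omega_h(b,y)$ themselves; so you prove the identity only for a specially normalized $\Omega$, whereas the theorem is later applied to $\Omega$'s pinned down differently (e.g.\ Lemma~\ref{lem2-7} fixes $\alpha_{\overline{\omega},\omega'}$ with no antisymmetry imposed). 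The repair also shows the detour is unnecessary: group-likeness forces the degree-two symmetric part of $\log I^\Omega$ to vanish, i.e.\ $\la[\omega]\otimes[\omega'],I^\Omega\ra+\la[\omega']\otimes[\omega],I^\Omega\ra=D^\Omega_{[\omega]^\vee}D^\Omega_{[\omega']^\vee}$, so your antisymmetrized expression automatically equals the single-ordering expression and no renormalization of the $\alpha$'s is needed; better still, drop the antisymmetrization altogether as the paper does, which also makes your auxiliary claim that $\varphi_\infty^{[1,2]}$ of a degree-one element lies in $\wedge^2\rmH_1^{\dR}(Y/\bfC)$ (true, but requiring that $\varphi_\infty$ respects the coproduct under the trivialization) superfluous.
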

\begin{proof}
	By definition, the function $D_{[[\omega]^\vee,[\omega']^\vee]}^{\Omega}(b,y)$ coincides with the coefficient of $[\omega]^\vee[\omega']^\vee$ in the power series
	\[
	(I^\Omega(b,y)-1)-\frac{1}{2}(I^\Omega(b,y)-1)^2.
	\]
	Therefore, we have
	\begin{multline}
		D_{[[\omega]^\vee,[\omega']^\vee]}^{\Omega}(b,y)=\la[\omega]\otimes[\omega'],I^\Omega(b,y)\ra-\frac{1}{2}\la[\omega],I^\Omega(b,y)\ra\la[\omega'],I^\Omega(b,y)\ra\\
		=\la[\omega]\otimes[\omega'],I^\Omega(b,y)\ra-\frac{1}{2}D^{\Omega}_{[\omega]^\vee}(y)D^{\Omega}_{[\omega']^\vee}(y).
	\end{multline}
	The second term of the above expression vanishes unless both $\omega$ and $\omega'$
	are elements of $\calB_e$.
	Thus, the theorem follows directly from Proposition~\ref{prop2-4}.
\end{proof}
\begin{example}
	We follow notations introduced in Examples~\ref{ex1}--\ref{ex4}.
	In this case, it is known that the homomorphism $\varphi_{\infty}^{[1,2]}$ is equal to zero.
	
	For a smooth path $\gamma$ \emph{from $d/dt$ to $y$}, the regularized iterated integral of $\frac{dt}{t}\frac{dt}{t-1}$ along $\gamma$ is given by
	\[
	\int_\gamma\frac{dt}{t}\frac{dt}{t-1}=-\mathrm{Li}_2(y).
	\]
	Therefore, applying Proposition~\ref{prop2-4}, we obtain
	\begin{equation}
		\begin{split}
			D^\Omega_{[e^0, e^1]}(b,y)
			&=-2\sqrt{-1}\mathrm{Im}(\mathrm{Li}_2(y))-2\sqrt{-1}\log|y|\mathrm{arg}(1-y)\\
			&=-2\sqrt{-1}D(y),
		\end{split}
	\end{equation}
	where $D(y)$ is the classical Bloch--Wigner function (\cite[Section 2]{Zagier}).
\end{example}

\section{The matrix coefficient formulas}\label{theformula}
In this section, we compute the matrix coefficients
\[
\la[\omega]\otimes[\omega'],\varphi_\infty^{[1,2]}([\eta]^\vee)\ra
\]
of the infinite Frobenius $\varphi_{\infty}$ by analyzing Theorem~\ref{thm2-2}.
We divide the computation into three cases.
The first case is when both $[\omega]$ and $ [\omega']$ are in $\calB_0$; the second case is
when one of $[\omega],[\omega']$ is in $\calB_h$; and the third case is when both $[\omega]$ and $ [\omega']$ are in $\calB_e $.
In the first and second cases, we sometimes replace $\alpha_{{\omega},\omega'}$
with more suitable choices.

The basis $\calB=\calB_h\coprod\calB_{ah}\coprod\calB_e$ of $\rmH^1_{\dR}(Y/\bfR)$, and the one-form $\Omega=\Omega^{(1)}+\Omega^{(2)}+\cdots$, are taken as in Subsection~\ref{expl}.

\subsection{The first case}
Suppose that both  $[\omega]$ and $[\omega']$ are elements of $\calB_0$ and of the same type,
namely, both holomorphic or both anti-holomorphic.
Recall that in this case, we take $\alpha_{{\omega},\omega'}$ to be zero.
Set $w=[\omega]^\vee$ and $w'=[\omega']^\vee$.
Then, for any smooth path $\gamma$ from $b$ to $y$, we have
\begin{equation}\label{eq14}
	D^{\Omega}_{[w,w']}(b,y)=\la[\omega]\otimes[\omega'],\varphi_{\infty}^{[1,2]}c_{\dR}(T(\gamma)^{(1)})\ra
	=\sum_{\eta\in\calB}\la[\omega]\otimes[\omega'],\varphi_{\infty}^{[1,2]}([\eta]^\vee)\ra\overline{\int_\gamma\eta}
\end{equation}
by Theorem~\ref{thm2-2} and Lemma~\ref{lem2-4}.
\begin{theorem}
	\label{thm3-0}Suppose that both $[\omega]$ and $[\omega']$ are elements of $\calB_0$ and of the same type.
	Then for any $\eta\in \calB$, we have
	\[
	\la[\omega]\otimes[\omega'],\varphi_\infty^{[1,2]}([\eta]^\vee)\ra=0.
	\]
\end{theorem}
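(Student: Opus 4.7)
The plan is to combine Theorem~\ref{thm2-2} with the single-valuedness of $D^{\Omega}_{[w,w']}$ and reduce the assertion to non-degeneracy of the period pairing on $\rmH^{1}_{\dR}(Y/\bfC)$.

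First, I would specialize Theorem~\ref{thm2-2} to the first case. When $[\omega]$ and $[\omega']$ both lie in $\calB_0$ and are of the same type, we have $\omega\wedge\omega'=0$ and $\overline{\omega}\wedge\overline{\omega}'=0$, so the convention in Subsection~\ref{expl} forces $\alpha_{\omega,\omega'}=\alpha_{\overline{\omega},\overline{\omega}'}=0$, and the formula of Theorem~\ref{thm2-2} collapses to
\[
D^{\Omega}_{[w,w']}(b,y)=-\la[\omega]\otimes[\omega'],\varphi_{\infty}^{[1,2]}c_{\dR}(T(\gamma)^{(1)})\ra
\]
for any smooth path $\gamma$ from $b$ to $y$. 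Because $\calB$ lies in the de Rham real structure, the anti-$\bfC$-linearity of $c_{\dR}$ gives $c_{\dR}(T(\gamma)^{(1)})=\sum_{\eta\in\calB}[\eta]^{\vee}\overline{\int_{\gamma}\eta}$. Setting $c_{\eta}:=\la[\omega]\otimes[\omega'],\varphi_{\infty}^{[1,2]}([\eta]^{\vee})\ra$, the identity becomes
\[
D^{\Omega}_{[w,w']}(b,y)=-\sum_{\eta\in\calB}c_{\eta}\overline{\int_{\gamma}\eta}.
\]

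Next, I would exploit single-valuedness. The left-hand side is defined intrinsically through $I^{\Omega}(b,y)\in\widehat{T}(\rmH_{1}^{\dR}(Y/\bfC))$ and therefore does not depend on the auxiliary path $\gamma$. Comparing the displayed formula for two paths $\gamma,\gamma'$ from $b$ to $y$ and letting $\lambda:=\gamma'\gamma^{-1}$ be the resulting loop, I obtain
\[
\sum_{\eta\in\calB}c_{\eta}\overline{\int_{\lambda}\eta}=0
\]
for every $\lambda\in\pi_{1}^{\mathrm{top}}(Y,y)$, hence for every cycle in the abelianization $\rmH_{1}(Y(\bfC),\bfZ)$. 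Taking complex conjugates, the cohomology class $\sum_{\eta\in\calB}\overline{c_{\eta}}[\eta]\in\rmH^{1}_{\dR}(Y/\bfC)$ pairs trivially with every integral homology class, and hence vanishes by the de Rham theorem. Since $\{[\eta]\}_{\eta\in\calB}$ is a basis of $\rmH^{1}_{\dR}(Y/\bfC)$, this forces $c_{\eta}=0$ for every $\eta\in\calB$, which is the claim.

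The main obstacle I expect is the tangential base-point case: one must verify that the path-variation $T(\gamma')^{(1)}-T(\gamma)^{(1)}$ still equals $\sum_{\eta}[\eta]^{\vee}\int_{\lambda}\eta$ as an ordinary line integral along $\lambda$, despite the regularization used in~(\ref{eq5}) to define $T(\gamma)^{(1)}$. This should follow from the fact that the regularizing factor $\exp(-\log(\epsilon)\mathrm{Res}_{b}\Omega)$ depends only on the germ of the path at $b$ and therefore cancels when two paths sharing that germ are compared; only the honest line integral along $\lambda$ then survives.
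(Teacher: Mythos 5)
Your proposal is correct and is essentially the paper's argument: the paper likewise specializes Theorem~\ref{thm2-2} to get $D^{\Omega}_{[w,w']}(b,y)=\pm\sum_{\eta\in\calB}c_{\eta}\overline{\int_{\gamma}\eta}$ and then uses single-valuedness of $D^{\Omega}_{[w,w']}$ together with linear independence of the classes $[\eta]$ in $\rmH^1_{\dR}(Y/\bfC)$, the only cosmetic difference being that the paper differentiates (so that exactness of $dD^{\Omega}_{[w,w']}=\sum_{\eta}c_{\eta}\overline{\eta}$ kills the coefficients) rather than comparing two paths and pairing with loops, which also sidesteps the regularization issue you flag at the tangential base point.
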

\begin{proof}By equation (\ref{eq14}), we have the following identity of smooth one-forms:
	\[
	dD^{\Omega}_{[w,w']}(b,y)=\sum_{\eta\in\calB}\la[\omega]\otimes[\omega'],\varphi_{\infty}^{[1,2]}([\eta]^\vee)\ra\overline{\eta}.
	\]
	Then the conclusion follows from the linear independence of $\eta$ in the cohomology group $\rmH^1_{\dR}(Y/\bfC)$.
\end{proof}
\begin{remark}We have not computed the case where $\omega\in \calB_h$ and $\omega'\in \calB_{ah}$, which is the remaining subcase of the first case.
\end{remark}

\subsection{The second case}\label{secondcase}
Next, we consider the case where $[\omega]\in \calB_0$ and $[\omega']\in \calB_e$.
Here, we compute only the case where $[\omega]\in \calB_h$, since the same result holds for $[\omega]\in \calB_{ah}$ by a similar argument. 
Since the equation $F_\infty^*(\Omega^{(1)})=c(\Omega^{(1)})$ holds and the equation $\varphi_{\infty}([\omega'])=\pm[\omega']$ holds for $\omega'\in \calB_e^{\pm}$, one-forms $\omega'$ and $\pm \overline{\omega'}$ represent the same cohomology class in $\rmH^1_{\dR}(Y(\bfC))$.
Therefore, the one-form $\calR_{-\varepsilon(\omega')}(\omega')$ is an exact form. Let $l$ be $0$ or $1$ according as $\epsilon=-$ or $+$, respectively. We choose an $\bfR(l)$-valued function $\calE_{\omega'}$
on $Y(\bfC)$ with at worst logarithmic singularities along $D$, satisfying
\begin{equation}
	\label{eq29}
	d\calE_{\omega'}=\calR_{-\varepsilon(\omega')}(\omega').
\end{equation}
Here, we use the term \emph{logarithmic singularity} in the following sense:
Let $f$ be a smooth function on $Y(\bfC)$. We say that $f$ has at worst logarithmic singularities along $D$ if, for each $c\in D$, we have
\[
f(q_c)=a\log|q_c|+O(1)
\]
for some $a\in \bfC$, where $q_c$ is any local parameter at $c$.

For $\omega\in\calB_h$ and $\omega'\in \calB_e$, we have the following identity of two-forms:
\[
\overline{\omega}\wedge\omega'=\overline{\omega}\wedge \calR_{-\varepsilon(\omega')}=-d(\calE_{\omega'}\overline{\omega}),
\]
since $\overline{\omega}$ is a closed form.
It follows that the one-form $\alpha_{\overline{\omega},\omega'}+\calE_{\omega'}\overline{\omega}$ is closed and belongs to $A^1(Y)$.
Since $\calB$ is a basis of the first cohomology group of the complex $A^\bullet(Y)$,
there exist complex numbers $a_{\eta},\ \eta\in\calB$ and a smooth function $\xi$ on $Y(\bfC)$, such that
\begin{equation}
	\label{eq11}
	\alpha_{\overline{\omega},\omega'}+\calE_{\omega'}\overline{\omega}=\sum_{\eta\in\calB}a_\eta\eta+d\xi.
\end{equation}
Note that, since the differential form $d\xi$ has at worst logarithmic singularities along $D$,
the function $\xi$ itself has at worst logarithmic singularities along $D$.
Applying $c_{\dR}F_\infty^*$ to the both-hand side of (\ref{eq11}),
we obtain
\[
0=2\sqrt{-1}\sum_{\eta}\mathrm{Im}(a_\eta)\eta+d(\xi-c_{\dR}F_\infty^*\xi),
\]
which implies that each $a_\eta$ must be real.
Therefore, we may replace $\alpha_{\overline{\omega},\omega'}$ by
$\alpha_{\overline{\omega},\omega'}-\sum_{\eta\in \calB_h\cup\calB_e}a_\eta\eta$.
After performing this replacement,
we have
\begin{equation}\label{eq12}
	\alpha_{\overline{\omega},\omega'}=-\calE_{\omega'}\overline{\omega}+\sum_{\eta\in \calB_{ah}}a_\eta\eta+d\xi.
\end{equation}

Let $(\ ,\ )$ denote the pairing on smooth one-forms on $X(\bfC)$ defined by
\[
(\alpha,\beta)=\frac{1}{2\pi\sqrt{-1}}\int_{X(\bfC)}\alpha\wedge\overline{\beta}.
\]
It is well known that this pairing defines a positive-definite Hermitian pairing on $\rmH^1_{\dR}(X(\bfC))$.
\begin{lemma}
	\label{lem2-7}Let us choose $\calB_h$ such that the following identities holds:
	\[
	(\eta,\eta')=0
	\]
	for all $\eta,\eta'\in \calB_h$ such that $ \eta\neq \eta'$.
	Then, for each $\omega\in \calB_h$ and $\omega'\in \calB_e$, we can choose $\alpha_{\overline{\omega},\omega'}$ such that
	\[
	\alpha_{\overline{\omega},\omega'}=-\calE_{\omega'}\overline{\omega}-\varepsilon(\omega')\sum_{\eta\in \calB_h}\frac{(\eta,\calE_{\omega'}\omega)}{(\eta,\eta)}\overline{\eta}+d\xi_{\omega,\omega'}
	\]
	for a smooth function $\xi_{\omega,\omega'}$ on $Y(\bfC)$ with at worst logarithmic singularities along $D$.
\end{lemma}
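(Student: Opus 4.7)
The plan is to start from equation~(\ref{eq12}), which already expresses
\[
\alpha_{\overline\omega,\omega'} = -\calE_{\omega'}\overline\omega + \sum_{\eta \in \calB_{ah}} a_\eta \eta + d\xi,
\]
and to pin down each antiholomorphic coefficient $a_{\overline{\eta_0}}$ (for $\eta_0 \in \calB_h$) by a pairing-with-$\eta_0$ computation on $X(\bfC)$. First, I wedge both sides with $\eta_0$ and integrate over $X(\bfC)$; these integrals converge because every integrand has at worst logarithmic singularities along $D$, which are integrable in real dimension two. The contribution $\int_X d\xi \wedge \eta_0$ vanishes by Stokes applied to $d(\xi \eta_0)$ (using $d\eta_0 = 0$ and the fact that the boundary circles around each $c \in D$ contribute $O(\epsilon\log\epsilon)\to 0$). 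For $\eta' \in \calB_h$ one has $\int_X \overline{\eta'}\wedge\eta_0 = -2\pi\sqrt{-1}(\eta_0,\eta')$, which vanishes for $\eta'\neq\eta_0$ by the orthogonality hypothesis and equals $-2\pi\sqrt{-1}(\eta_0,\eta_0)$ for $\eta'=\eta_0$. Finally, using the identity $\overline{\calE_{\omega'}} = -\varepsilon(\omega')\calE_{\omega'}$ (a consequence of $\calE_{\omega'}$ being $\bfR(l)$-valued), one rewrites $\int_X \calE_{\omega'}\overline\omega \wedge \eta_0 = 2\pi\sqrt{-1}\,\varepsilon(\omega')(\eta_0, \calE_{\omega'}\omega)$. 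Combining these gives
\[
a_{\overline{\eta_0}} = -\varepsilon(\omega')\frac{(\eta_0, \calE_{\omega'}\omega)}{(\eta_0,\eta_0)} - \frac{\int_X \alpha_{\overline\omega,\omega'} \wedge \eta_0}{2\pi\sqrt{-1}\,(\eta_0,\eta_0)}.
\]

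The next step is to dispose of the unwanted term $V_{\eta_0} := \int_X \alpha_{\overline\omega,\omega'} \wedge \eta_0$. Using the symmetry $c \circ F_\infty^*(\alpha_{\overline\omega,\omega'}) = \alpha_{\overline\omega,\omega'}$ from Proposition~\ref{prop2-1}, together with $F_\infty^*\eta_0 = \overline{\eta_0}$ (which follows from $c\circ F_\infty^*(\eta_0) = \eta_0$) and the orientation-reversing nature of $F_\infty$ on $X(\bfC)$, a short change-of-variables calculation shows $\overline{V_{\eta_0}} = -V_{\eta_0}$, i.e.\ $V_{\eta_0} \in \sqrt{-1}\,\bfR$. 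Consequently, $r_{\eta_0} := V_{\eta_0}/\bigl(2\pi\sqrt{-1}(\eta_0,\eta_0)\bigr)$ is a real scalar, and replacing $\alpha_{\overline\omega,\omega'}$ by $\alpha_{\overline\omega,\omega'} + r_{\eta_0}\overline{\eta_0}$ kills $V_{\eta_0}$. This modification preserves $d\alpha_{\overline\omega,\omega'} = \overline\omega \wedge \omega'$ (since $\overline{\eta_0}$ is closed), the $c\circ F_\infty^*$-invariance (since $\overline{\eta_0}$ is fixed by $c\circ F_\infty^*$ and $r_{\eta_0}$ is real), and the Hodge and weight filtration conditions (as $\overline{\eta_0}\in A^1(X)$). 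Distinct $\eta_0 \in \calB_h$ contribute independently by the orthogonality assumption, so all such adjustments can be performed simultaneously.

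After these modifications the pairing formula collapses to $a_{\overline{\eta_0}} = -\varepsilon(\omega')(\eta_0, \calE_{\omega'}\omega)/(\eta_0,\eta_0)$ for every $\eta_0 \in \calB_h$; substituting back into~(\ref{eq12}) yields the claimed identity, with $\xi_{\omega,\omega'}$ equal to the $\xi$ from (\ref{eq12}) and therefore still having at worst logarithmic singularities along $D$. The main obstacles I anticipate are the Stokes-type cancellation of log-singular boundary contributions and the careful bookkeeping of conjugation signs for the $\bfR(l)$-valued primitive $\calE_{\omega'}$; once those are in hand, the rest is essentially linear algebra exploiting the orthogonality of $\calB_h$.
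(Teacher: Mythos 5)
Your pairing computation itself (wedging with $\eta_0\in\calB_h$, the Stokes argument over $Y_\epsilon$ with the $O(\epsilon\log\epsilon)$ boundary estimate, the orthogonality of $\calB_h$, and the sign identity $\overline{\calE_{\omega'}}=-\varepsilon(\omega')\calE_{\omega'}$) is exactly the paper's argument. The genuine gap is in how you treat $V_{\eta_0}=\int_{X(\bfC)}\alpha_{\overline{\omega},\omega'}\wedge\eta_0$. In the paper this term is not removed by hand: it vanishes identically, because the standing assumption that $\Omega$ satisfies Proposition~\ref{prop2-1} (in particular $\Omega\in F^0$) forces the coefficient $\alpha_{\overline{\omega},\omega'}$ attached to $[\overline{\omega}]^\vee[\omega']^\vee$ to lie in $F^1A^1(X\log D)$, i.e.\ to be of type $(1,0)$ (the class $[\omega']^\vee$ is of Hodge type $(-1,-1)$, so the form multiplying it must sit in $F^1$); hence $\alpha_{\overline{\omega},\omega'}\wedge\eta_0=0$ pointwise. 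This is precisely the ``$\alpha$ is of type $(1,0)$'' step in the paper's proof. Your substitute step, replacing $\alpha_{\overline{\omega},\omega'}$ by $\alpha_{\overline{\omega},\omega'}+r_{\eta_0}\overline{\eta_0}$, is not available: $\overline{\eta_0}$ is a $(0,1)$-form, so the modified $\Omega$ no longer lies in $F^0$. Your parenthetical justification ``as $\overline{\eta_0}\in A^1(X)$'' only addresses the weight filtration, not the Hodge filtration, so the claim that the filtration conditions are preserved is false. This matters beyond aesthetics: the replacement producing (\ref{eq12}) deliberately subtracts only forms from $\calB_h\cup\calB_e$ and leaves the $\calB_{ah}$-terms untouched for exactly this reason, and the $\alpha_{\overline{\omega},\omega'}$ produced by the lemma must remain a coefficient of an admissible $\Omega$, since Theorem~\ref{thm3-1} and the Hodge-compatible de Rham splitting used in Theorem~\ref{thm4-2} rely on Proposition~\ref{prop2-1} holding for that $\Omega$. (Your observation that $V_{\eta_0}$ is purely imaginary is correct, but beside the point.)

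The repair is short: delete the modification step and instead invoke the type-$(1,0)$ property of $\alpha_{\overline{\omega},\omega'}$, which gives $V_{\eta_0}=0$ outright; the remainder of your computation then coincides with the paper's proof.
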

\begin{proof}Let us take $\xi_{\omega,\omega'}$ to be the function $\xi$ appearing in the equation (\ref{eq12}).
	Since $\alpha_{{\omega},\omega'}$ is of type $(1,0)$,
	we have
	\[
	\eta\wedge\calE_{\omega'}\overline{\omega}=\sum_{\eta'\in \calB_h}a_{\eta'}\eta\wedge\overline{\eta'}+\eta\wedge d\xi_{\omega,\omega'}
	\]
	for each $\eta\in \calB_h$.
	For each $c\in D$, let us fix a local parameter $q_c$, and define the open disk 
	\[
	\Delta_{c,\epsilon}:=\{x\in X(\bfC)\ |\ |q_c(x)|<\epsilon\}.
	\]
	Set $Y_\epsilon:=X(\bfC)\setminus \cup_{c\in D}\Delta_{c,\epsilon}$.
	Then, by Stokes' theorem, we have the identity
	\begin{equation}
		\label{vanishingpart}
	\int_{Y_\epsilon}\eta\wedge d\xi=-\sum_{c\in D}\int_{\partial\Delta_{c,\epsilon}}\xi\eta.
	\end{equation}
	We write $q_c=r\exp(2\pi\sqrt{-1}\theta)$.
	On the boundary $\partial\Delta_{c,\epsilon}$ of $\Delta_{c,\epsilon}$, we have
	\[
	\xi\eta=(\alpha\log(\epsilon)+O(1))\epsilon d\theta.
	\]
	Therefore, the integral in (\ref{vanishingpart}) tends to zero as $\epsilon\to 0$.
	
	Hence, we obtain
	\begin{multline}
	(\eta,\calE_{\omega'} \omega)=-\epsilon(\omega')\lim_{\epsilon\to 0}\int_{Y_\epsilon}\eta\wedge\calE_{\omega'}\overline{\omega}=-\epsilon(\omega')\lim_{\epsilon\to 0}\sum_{\eta'}a_{\eta'}\int_{Y_\epsilon}\eta\wedge \overline{\eta'}\\
	=-\epsilon(\omega')\sum_{\eta'}a_{\eta'}(\eta,\eta')=-\epsilon(\omega')a_\eta(\eta,\eta).
	\end{multline}
	Here, we used the identity $\calE_{\omega'}\overline\omega=-\epsilon(\omega')\overline{\calE_{\omega'}\omega}$.
	This proves the lemma.
\end{proof}
\begin{theorem}\label{thm3-1}Let $ \Omega$ and $\calB_h$ be as in Lemma~\ref{lem2-7}.
	Suppose that $b$ lies over an element of $Y(\bfC)$.
	Then, for any $\omega\in\calB_h$ and $[\omega']\in \calB_e$, the following identities hold$:$
	\[
	\la[\omega]\otimes[\omega'],\varphi^{[1,2]}_\infty([\eta]^\vee)\ra=\begin{cases}
		\frac{(\calE_{\omega'}\omega,\overline{\eta})}{(\eta,\eta)}\qquad&\eta\in \calB_{ah},\ \overline{\eta}\neq \omega,\\
		\frac{((\calE_{\omega'}-\calE_{\omega'}(b))\omega,\omega)}{(\omega,\omega)}&\overline{\eta}=\omega,\\
		0&\text{other wise}.
	\end{cases}
	\]
\end{theorem}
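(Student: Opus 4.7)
The plan is to differentiate the length-two Bloch--Wigner identity from Theorem~\ref{thm2-2} and then compare cohomology classes. In the case $[\omega]\in\calB_h,\ [\omega']\in\calB_e$, Theorem~\ref{thm2-2} combined with the explicit form of $c_{\dR}(T(\gamma)^{(1)})$ computed in the proof of Lemma~\ref{lem2-4} yields
\[
D^\Omega_{[[\omega]^\vee,[\omega']^\vee]}(b,y)
=-\sum_{\eta\in\calB}M_\eta\,\overline{\int_\gamma\eta}
+\int_\gamma\omega\,\calR_{-\varepsilon(\omega')}(\omega')
-\varepsilon(\omega')\int_\gamma\overline{\alpha_{\overline\omega,\omega'}},
\]
where $M_\eta:=\la[\omega]\otimes[\omega'],\varphi_\infty^{[1,2]}([\eta]^\vee)\ra$ are the matrix coefficients to be determined.

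Since the left-hand side is a single-valued smooth function of $y$, I differentiate. The path-integral pieces contribute $-\sum_\eta M_\eta\,\overline\eta$ and $-\varepsilon(\omega')\overline{\alpha_{\overline\omega,\omega'}}$, while the identity $\calR_{-\varepsilon(\omega')}(\omega')=d\calE_{\omega'}$ makes the inner integral collapse to $\calE_{\omega'}(y)-\calE_{\omega'}(b)$, so that $d\int_\gamma\omega\,\calR_{-\varepsilon(\omega')}(\omega')=(\calE_{\omega'}-\calE_{\omega'}(b))\omega$; the boundary value $\calE_{\omega'}(b)$ is well defined precisely because $b$ lies over a point of $Y(\bfC)$. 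Substituting Lemma~\ref{lem2-7} into $\alpha_{\overline\omega,\omega'}$, taking complex conjugates using $\overline{\calE_{\omega'}}=-\varepsilon(\omega')\calE_{\omega'}$ (forced by the $\bfR(l)$-valuedness) and $\overline{(\eta,\calE_{\omega'}\omega)}=(\calE_{\omega'}\omega,\eta)$, one finds that the $\calE_{\omega'}\omega$ term produced by the bar cancels the one coming from the iterated integral, leaving
\[
dD^\Omega_{[[\omega]^\vee,[\omega']^\vee]}(b,y)
=-\sum_{\eta\in\calB}M_\eta\,\overline\eta
-\calE_{\omega'}(b)\,\omega
+\sum_{\eta\in\calB_h}\frac{(\calE_{\omega'}\omega,\eta)}{(\eta,\eta)}\,\eta
-\varepsilon(\omega')\,d\overline{\xi_{\omega,\omega'}}.
\]
The left-hand side is exact on $Y(\bfC)$, and $d\overline{\xi_{\omega,\omega'}}$ is exact there as well (the function $\xi_{\omega,\omega'}$ is smooth on $Y(\bfC)$, its logarithmic singularities being supported on $D$), so passing to classes in $\rmH^1_{\dR}(Y/\bfC)$ produces
\[
\sum_{\eta\in\calB}M_\eta[\overline\eta]
=-\calE_{\omega'}(b)[\omega]+\sum_{\eta\in\calB_h}\frac{(\calE_{\omega'}\omega,\eta)}{(\eta,\eta)}[\eta].
\]

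The final step is to compare coefficients in the basis $\calB$. The involution $\eta\mapsto[\overline\eta]$ swaps $\calB_h$ and $\calB_{ah}$ and fixes $[\eta]$ for $\eta\in\calB_e$, since $\calB_e\subset\rmH^1_{\dR}(Y/\bfR)$. The right-hand side is supported in the $\calB_h$-span, so the coefficients of $[\overline\mu]$ (for $\mu\in\calB_h$) and of $[\eta]$ (for $\eta\in\calB_e$) on the left must vanish, giving $M_\eta=0$ in those cases. Matching the coefficient of $[\mu_i]$ for $\mu_i\in\calB_h$ yields $M_{\overline{\mu_i}}=(\calE_{\omega'}\omega,\mu_i)/(\mu_i,\mu_i)-\calE_{\omega'}(b)\delta_{\mu_i,\omega}$, which upon setting $\eta=\overline{\mu_i}\in\calB_{ah}$ reproduces the three stated cases. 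The main technical obstacle is the algebraic bookkeeping in the substitution step: the sign $-\varepsilon(\omega')$ outside the bar, the reality of $\calE_{\omega'}$, and the Hermitian symmetry of the Petersson pairing must all combine correctly so that the $\calE_{\omega'}\omega$ contributions cancel and the orthogonality assumption on $\calB_h$ from Lemma~\ref{lem2-7} leaves only the diagonal denominators $(\eta,\eta)$. Once this cancellation is verified, the cohomological identity and basis comparison deliver the theorem immediately.
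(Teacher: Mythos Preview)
Your proof is correct and follows essentially the same route as the paper: differentiate the length-two identity from Theorem~\ref{thm2-2}, substitute the expression for $\alpha_{\overline\omega,\omega'}$ from Lemma~\ref{lem2-7}, observe the cancellation of the $\calE_{\omega'}\omega$ terms, and then use linear independence of the classes $[\eta]$ in $\rmH^1_{\dR}(Y/\bfC)$ to read off the coefficients $M_\eta$. One small imprecision: for $\eta\in\calB_e$ the class $[\overline\eta]$ is $\varepsilon(\eta)[\eta]$ rather than $[\eta]$ (reality of the cohomology class does not mean the representing form is real), but since this still lies in the $\calB_e$-span the basis comparison is unaffected.
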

\begin{proof}
	Set $w=[\omega]^\vee,\ w'=[\omega']^\vee$, and $\varepsilon':=\varepsilon(\omega')$. Then according to Theorem~\ref{thm2-2} and Lemma~\ref{lem2-4}, we have the following equation:
	\[
	D_{[w,w']}^{\Omega}(y)=\int_\gamma(\omega\calR_{-\varepsilon'}(\omega')-\varepsilon'\overline{\alpha_{\overline{\omega},\omega'}})
	-\sum_{\eta\in\calB}	\la[\omega]\otimes[\omega'],\varphi^{[1,2]}_\infty([\eta]^\vee)\ra\overline{\int_\gamma\eta}
	\]
	for any smooth path $\gamma$ from $b$ to $y$.
	Since the identity
	\[
	\int_b^y\omega\calR_{-\varepsilon(\omega')}(\omega')=\int_b^y\omega(\calE_{\omega'}-\calE_{\omega'}(b))
	\]
	holds, we obtain the following equation of one-forms:
	\[
	dD_{[w,w']}^{\Omega}(y)=\omega(\calE_{\omega'}-\calE_{\omega'}(b))-\varepsilon'\overline{\alpha_{\overline{\omega},\omega'}}
	-\sum_{\eta\in\calB}	\la[\omega]\otimes[\omega'],\varphi^{[1,2]}_\infty([\eta]^\vee)\ra\overline\eta.
	\]
	By Lemma~\ref{lem2-7}, we have
	\[
	\varepsilon'\overline{\alpha_{\overline{\omega},\omega'}}=\calE_{\omega'}\omega-\sum_{\eta\in \calB_h}\frac{(\calE_{\omega'}\omega,\eta)}{(\eta,\eta)}{\eta}+d\xi
	\]
	for some smooth function $\xi$ on $Y(\bfC)$.
	Therefore, the equation
	\begin{multline}\label{eq13}
		d\xi'=
		\left(\frac{(\calE_{\omega'}\omega,\omega)}{(\omega,\omega)}-\calE_{\omega'}(b)-\la[\omega]\otimes[\omega'],\varphi^{[1,2]}_\infty([\overline{\omega}]^\vee)\ra\right)\omega\\
		+\sum_{\eta\in \calB_h\setminus\{\omega\}}\left(\frac{(\calE_{\omega'}\omega,\eta)}{(\eta,\eta)}
		-\la[\omega]\otimes[\omega'],\varphi^{[1,2]}_\infty([\overline{\eta}]^\vee)\ra\right)\eta
		-\sum_{\eta\in\calB\setminus\calB_{ah}}	\la[\omega]\otimes[\omega'],\varphi^{[1,2]}_\infty([\eta]^\vee)\ra\overline\eta
	\end{multline}
	holds for a smooth function $\xi'$ on $Y(\bfC)$.
	By the linear independence of  the forms $\eta$ in the cohomology group $\rmH^1_{\dR}(Y/\bfC)$,
	we obtain the conclusion of the theorem.
\end{proof}
When using the tangential base point, the formula becomes
much simpler.
Suppose that $b$ is a tangent vector at $c\in D$.
We normalize $\calE_{\omega'}$ by the condition
\begin{equation}
	\label{eq18}
	\lim_{\epsilon\to 0}(\calE_{\omega'}(\gamma(\epsilon))-2\mathrm{Res}_b(\omega')\log(\epsilon))=0
\end{equation}
for any smooth path $\gamma$ from $b$ to $y$. This condition does not depend on the choice of $\gamma$.
\begin{theorem}
	\label{thm3-2}We use the same notation as in Theorem~\ref{thm3-1}.
	Suppose that $b$ is a tangential base point of $Y$.
	Then, the following identities hold$:$
	\[
	\la[\omega]\otimes[\omega'],\varphi^{[1,2]}_\infty([\eta])\ra=\begin{cases}
		\frac{(\calE_{\omega'}\omega,\overline{\eta})}{(\eta,\eta)}\qquad&\eta\in \calB_{ah},\\
		0&\text{otherwise}.
	\end{cases}
	\]
\end{theorem}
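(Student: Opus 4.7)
The plan is to mirror the proof of Theorem~\ref{thm3-1} using \emph{regularized} iterated integrals, and then to observe that the normalization (\ref{eq18}) of $\calE_{\omega'}$ kills the boundary contribution $\calE_{\omega'}(b)$ that appeared in the ordinary base-point case.

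First I would assemble, from Theorem~\ref{thm2-2} and Lemma~\ref{lem2-4}, the identity
\[
D_{[w,w']}^{\Omega}(b,y)=\int_\gamma\bigl(\omega\,\calR_{-\varepsilon'}(\omega')-\varepsilon'\overline{\alpha_{\overline{\omega},\omega'}}\bigr)-\sum_{\eta\in\calB}\la[\omega]\otimes[\omega'],\varphi^{[1,2]}_\infty([\eta]^\vee)\ra\overline{\int_\gamma\eta},
\]
where $w=[\omega]^\vee$, $w'=[\omega']^\vee$, $\varepsilon'=\varepsilon(\omega')$, and all integrals along $\gamma$ are regularized at the tangential starting point $b$ as in Subsection~\ref{BWfct}.

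The heart of the computation is the evaluation of the regularized iterated integral $\int_\gamma\omega\,d\calE_{\omega'}$. Since $\omega\in\calB_h$ is holomorphic and therefore has no residue at $b$, the only source of divergence is the logarithmic singularity of $\calE_{\omega'}$ at $b$. Chen's composition formula formally gives
\[
\int_\gamma\omega\,d\calE_{\omega'}=\int_b^y\omega\cdot\calE_{\omega'}-\calE_{\omega'}(b)\int_b^y\omega,
\]
and the content of (\ref{eq18}) is precisely that, in the regularized sense, the boundary value $\calE_{\omega'}(b)$ vanishes. Hence the regularized iterated integral reduces to the convergent expression $\int_b^y\omega\cdot\calE_{\omega'}$ with no boundary correction.

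Differentiating with respect to $y$ then yields the one-form identity
\[
dD_{[w,w']}^{\Omega}(b,y)=\omega\cdot\calE_{\omega'}-\varepsilon'\overline{\alpha_{\overline{\omega},\omega'}}-\sum_{\eta\in\calB}\la[\omega]\otimes[\omega'],\varphi^{[1,2]}_\infty([\eta]^\vee)\ra\overline{\eta},
\]
which is exactly the analogue of (\ref{eq13}) with the unwanted term $-\calE_{\omega'}(b)\,\omega$ removed. Substituting the expression for $\alpha_{\overline{\omega},\omega'}$ from Lemma~\ref{lem2-7} and invoking the linear independence of $\calB$ in $\rmH^1_{\dR}(Y/\bfC)$ then gives the stated formulas; the unification of the two cases $\overline{\eta}=\omega$ and $\overline{\eta}\neq\omega$ from Theorem~\ref{thm3-1} is precisely the manifestation of the vanishing of $\calE_{\omega'}(b)$.

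The main obstacle is the rigorous justification of the regularization step: one must verify, using the definitions (\ref{eq5}) and (\ref{eq10}) of regularized iterated integrals together with the normalization (\ref{eq18}), that the \emph{constant of integration} at the tangent vector $b$ truly drops out. A secondary bookkeeping point is to check that the residue contributions arising from $\eta\in\calB_e$ in the regularized integrals $\overline{\int_\gamma\eta}$ are absorbed consistently; this is what makes the \emph{``otherwise''} case of the formula genuinely vanish rather than produce a residual polar correction.
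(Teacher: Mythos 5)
Your proposal is correct and follows essentially the same route as the paper's own proof: the paper likewise reruns the argument of Theorem~\ref{thm3-1} with regularized iterated integrals, using the formula (\ref{eq10}) to rewrite the relevant regularized integral as $\lim_{\epsilon\to 0}\int_{\gamma_\epsilon}\omega\bigl(\calE_{\omega'}-2\mathrm{Res}_b(\omega')\log|q_c|\bigr)$ and then invoking the normalization (\ref{eq18}) to show that the boundary constant $\calE_{\omega'}(b)$ disappears from the analogue of (\ref{eq13}). From there the paper concludes, exactly as you indicate, by substituting the expression for $\alpha_{\overline{\omega},\omega'}$ from Lemma~\ref{lem2-7} and using linear independence of the basis forms in $\rmH^1_{\dR}(Y/\bfC)$.
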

\begin{proof}By the equation (\ref{eq10}), we can compute the regularized iterated integral as follows
	\begin{multline}
		\int_{\gamma}\omega\omega'+\overline{\int_{\gamma}\overline{\omega}\omega'}
		=\lim_{\epsilon\to 0}\left(\int_{\gamma_\epsilon}(\omega\calE_{\omega'})+\int_{\gamma_\epsilon}\omega\calR_{-\varepsilon(\omega')}\int_{\epsilon}^1\mathrm{Res}_c(\omega')\right)\\
		=\lim_{\epsilon\to 0}\left(\int_{\gamma_\epsilon}(\omega\calE_{\omega'}-2\mathrm{Res}_z(\omega')\log|q_c|)\right).
	\end{multline}
	Let $q_c$ be a local parameter at $c$ satisfying $\partial/\partial q_c=b$.
	Since we have \[
	\lim_{t\to 0}(\calE_{\omega'}(\gamma(t))-2\mathrm{Res}_b(\omega')\log|q_c(\gamma(t))|))=0
	\]
	by our normalization (\ref{eq18}),
	the following equation holds for a smooth function $\xi'$ on $Y(\bfC)$ (cf.\ Equation (\ref{eq13})):
	\begin{multline}\label{eq15}
		d\xi'=
		\sum_{\eta\in \calB_h}\left(\frac{(\calE_{\omega'}\omega,\eta)}{(\eta,\eta)}
		-\la[\omega]\otimes[\omega'],\varphi^{[1,2]}_\infty([\overline{\eta}]^\vee)\ra\right)\eta
		\\
		-\sum_{\eta\in\calB\setminus\calB_{ah}}	\la[\omega]\otimes[\omega'],\varphi^{[1,2]}_\infty([\eta]^\vee)\ra\overline\eta.
	\end{multline}
	Hence, we obtain the conclusion of the theorem by an argument similar to that of Theorem~\ref{thm3-1}.
\end{proof}

\subsection{The third case}\label{thirdcase}
As in the previous subsection, we take a function $\calE_{\omega}$ for each $\omega\in \calB_e$. We define a one-form $\calF_{\omega,\omega'}$, having at worst logarithmic singularities along $D$, by
\begin{equation}
	\label{eq27}
	\calF_{\omega,\omega'}=\calE_{\omega'}\calR_{\varepsilon(\omega)}(\omega)-\calE_{\omega}\calR_{\varepsilon(\omega')}(\omega')
\end{equation}
(cf.\ ~\cite[9.2.3]{Brown17}).
It is easy to check that $\calF_{\omega,\omega'}$ is closed.
\begin{lemma}
	\label{lem3-3}Let $w=[\omega]^\vee$ and $w'=[\omega']^\vee$. If $b$ lies over an $F$-rational point of $Y$, then the following identity of differential one-forms on $Y(\bfC)$ holds$:$
	\begin{equation}\label{eq17}
		d D^{\Omega}_{[w,w']}=\frac{1}{2}\left(\calF_{\omega,\omega'}-\calE_{\omega'}(b)\calR_{\epsilon}(\omega)+\calE_{\omega}(b)\calR_{\varepsilon'}\left(\omega'\right)\right)+\sum_{\eta\in\calB}\la[\omega]\otimes[\omega'],\varphi^{[1,2]}_\infty([\eta]^\vee)\ra\overline{\eta}.
	\end{equation}
\end{lemma}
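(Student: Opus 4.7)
The strategy is to differentiate both sides of the length-two formula from Theorem~\ref{thm2-2} in the case $[\omega],[\omega']\in\calB_e$ and then simplify the right-hand side using the potential functions $\calE_\omega,\calE_{\omega'}$ and the definition of $\calF_{\omega,\omega'}$. Writing $\varepsilon=\varepsilon(\omega)$ and $\varepsilon'=\varepsilon(\omega')$, Theorem~\ref{thm2-2} reads
\[
D^{\Omega}_{[w,w']}(b,y)=-\la[\omega]\otimes[\omega'],\varphi_{\infty}^{[1,2]}c_{\dR}(T(\gamma)^{(1)})\ra+\calR_{-\varepsilon\varepsilon'}\!\left(\int_\gamma\omega\omega'\right)-\tfrac{1}{2}\calR_{-\varepsilon}\!\left(\int_\gamma\omega\right)\calR_{\varepsilon'}\!\left(\int_\gamma\omega'\right).
\]
For the Frobenius term I would invoke Lemma~\ref{lem2-4}: since $\{[\eta]^\vee\mid\eta\in\calB\}$ is a real basis, $c_{\dR}(T(\gamma)^{(1)})=\sum_{\eta\in\calB}[\eta]^\vee\,\overline{\int_\gamma\eta}$, and differentiating with respect to $y$ produces, up to sign, the summation $\sum_{\eta}\la[\omega]\otimes[\omega'],\varphi_{\infty}^{[1,2]}([\eta]^\vee)\ra\overline\eta$ that appears in \eqref{eq17}.

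For the iterated-integral terms I would use the standard rule $d_y\int_\gamma\omega_1\omega_2=\omega_1\int_\gamma\omega_2$ (no regularisation is needed, since $b$ is an honest point of $Y$). The heart of the computation is then the trivial bilinear identity
\[
\calR_{-\varepsilon}(\omega)\calR_{\varepsilon'}\!\left(\int_\gamma\omega'\right)+\calR_{\varepsilon}(\omega)\calR_{-\varepsilon'}\!\left(\int_\gamma\omega'\right)=2\!\left(\omega\int_\gamma\omega'-\varepsilon\varepsilon'\bar\omega\,\overline{\int_\gamma\omega'}\right),
\]
obtained by expanding $\calR_\pm(z)=z\pm\bar z$. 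This identity allows me to replace
\[
\omega\int_\gamma\omega'-\varepsilon\varepsilon'\bar\omega\,\overline{\int_\gamma\omega'}-\tfrac{1}{2}\calR_{-\varepsilon}(\omega)\calR_{\varepsilon'}\!\left(\int_\gamma\omega'\right)\quad\text{by}\quad\tfrac{1}{2}\calR_\varepsilon(\omega)\calR_{-\varepsilon'}\!\left(\int_\gamma\omega'\right),
\]
and to cancel the corresponding mixed terms in the symmetric piece coming from differentiating the second factor of the last summand in the formula above.

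Finally, since $d\calE_{\omega'}=\calR_{-\varepsilon'}(\omega')$ is an exact form, the quantity $\calR_{-\varepsilon'}\!\left(\int_\gamma\omega'\right)$ is path-independent and equals $\calE_{\omega'}(y)-\calE_{\omega'}(b)$; this is precisely where the hypothesis that $b$ lies over an $F$-rational point of $Y$ is used, since otherwise $\calE_{\omega'}(b)$ need not be finite. Substituting this and its $\omega\leftrightarrow\omega'$ analogue, the iterated-integral contribution collapses to
\[
\tfrac{1}{2}\calR_\varepsilon(\omega)\bigl(\calE_{\omega'}-\calE_{\omega'}(b)\bigr)-\tfrac{1}{2}\bigl(\calE_\omega-\calE_\omega(b)\bigr)\calR_{\varepsilon'}(\omega'),
\]
which, by the definition~\eqref{eq27} of $\calF_{\omega,\omega'}$, is exactly $\tfrac{1}{2}\bigl(\calF_{\omega,\omega'}-\calE_{\omega'}(b)\calR_\varepsilon(\omega)+\calE_\omega(b)\calR_{\varepsilon'}(\omega')\bigr)$. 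Combining with the Frobenius contribution yields \eqref{eq17}.

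The main obstacle is the middle step: differentiation produces three separate $\calR$-products which only reassemble into a clean multiple of $\calF_{\omega,\omega'}$ after the factorisation identity above is applied. The identity itself is immediate from expanding $\calR_\pm$, but recognising it as the correct regrouping and tracking the interaction of the signs $\varepsilon,\varepsilon'$ across the symmetric pair $(\omega,\omega')$ and $(\omega',\omega)$ is the essential technical content of the argument.
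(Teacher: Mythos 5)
Your proposal is correct and follows essentially the same route as the paper: differentiate the length-two formula of Theorem~\ref{thm2-2} using $d\int_b^y\alpha\beta=\alpha\int_b^y\beta$, regroup the resulting $\calR$-products via the same bilinear expansion identity, and then substitute $\calR_{-\varepsilon'}\bigl(\int_b^y\omega'\bigr)=\calE_{\omega'}(y)-\calE_{\omega'}(b)$ (and its analogue for $\omega$) to recognize $\tfrac12\calF_{\omega,\omega'}$ plus the boundary terms, with the Frobenius contribution handled through Lemma~\ref{lem2-4} exactly as in the paper. Your parenthetical ``up to sign'' on the Frobenius summand and your remark on where $\bfR$-rationality of $b$ enters are consistent with the paper's treatment, so no gap remains.
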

\begin{proof}Put $\varepsilon:=\varepsilon(\omega)$ and $\varepsilon':=\varepsilon(\omega')$.
	Then, by $d\int_b^y\alpha\beta=\alpha\int_b^y\beta$, we obtain
	\begin{multline}
		d\calR_{-\varepsilon\varepsilon}\left(\int_\gamma \omega\omega'\right)=\calR_{-\varepsilon\varepsilon'}\left(\omega\int_b^y \omega'\right)\\
		=\frac{1}{2}\calR_{-\varepsilon}(\omega)\calR_{\epsilon'}\left(\int_b^y\omega'\right)+\frac{1}{2}\calR_{\varepsilon}\left(\omega\right)\calR_{-\varepsilon'}\left(\int_b^y\omega'\right).
	\end{multline}
	Therefore, we have the following identities:
	\begin{equation}
		\begin{split}
			d\left(\calR_{-\varepsilon\varepsilon'}\left(\int_b^y \omega\omega'\right)\right.&\left.-\frac{1}{2}\calR_{-\varepsilon}\left(\int_b^y\omega\right)\calR_{\varepsilon'}\left(\int_b^y\omega'\right)\right)\\
			&=\frac{1}{2}\left(\calR_{-\varepsilon}(\omega)\calR_{\varepsilon'}\left(\int_b^y\omega'\right)+\calR_{\varepsilon}(\omega)\calR_{-\varepsilon'}\left(\int_b^y\omega'\right)\right)\\
			&\hspace{1cm}-\frac{1}{2}\left(\calR_{-\varepsilon}\left(\omega\right)\calR_{\varepsilon'}\left(\int_b^y\omega'\right)+\calR_{-\varepsilon}\left(\int_b^y\omega\right)\calR_{\varepsilon'}\left(\omega'\right)\right)\\
			&	=\frac{1}{2}\left(\calR_{\epsilon}(\omega)\calR_{-\varepsilon}\left(\int_b^y\omega\right)-\calR_{-\varepsilon}\left(\int_b^y\omega\right)\calR_{\varepsilon'}\left(\omega'\right)\right)\\
			&	=\frac{1}{2}\left(\calF_{\omega,\omega'}-\calE_{\omega'}(b)\calR_{\epsilon}(\omega)+\calE_{\omega}(b)\calR_{\varepsilon'}\left(\omega'\right)\right).
		\end{split}
	\end{equation}
	The lemma now follows from Theorem~\ref{thm2-2} and the equations above.
\end{proof}
\begin{theorem}
	\label{thm3-3}Suppose that $b$ lies over an $F$-rational point of $Y$. Then, for any $[\omega],[\omega']\in \calB_e$ and $[\eta]\in \calB_0$, we have
	\begin{multline}
		\la[\omega]\otimes[\omega'],\varphi_{\infty}^{[1,2]}([\overline{\eta}]^\vee)\ra=\frac{1}{2}\frac{(\calE_{\omega'}(b)\calR_{\epsilon}(\omega)-\calE_{\omega}(b)\calR_{\varepsilon'}\left(\omega'\right)
			-\calF_{\omega,\omega'},\eta)}{(\eta,\eta)}\\
		-\sum_{\eta'\in\calB_e}\la[\omega]\otimes[\omega'],\varphi_{\infty}^{[1,2]}([\eta']^\vee)\frac{(\overline{\eta'},\eta)}{(\eta,\eta)}.
	\end{multline}
	In particular, if ${\eta}$ is orthogonal to every element of $\calB_e$ with respect to the pairing $(\ ,\ )$, then the following identity holds$:$
	\[
	\la[\omega]\otimes[\omega'],\varphi_{\infty}^{[1,2]}([\overline{\eta}]^\vee)=-\frac{1}{2}\frac{(\calF_{\omega,\omega'},\eta)}{(\eta,\eta)}
	\]
\end{theorem}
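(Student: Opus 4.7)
The plan is to deduce the identity from Lemma~\ref{lem3-3} by pairing both sides against $\eta$ via the Hermitian form $(\ ,\ )$. The left-hand side, being exact on $Y(\bfC)$, contributes nothing once boundary effects at the cusps are controlled; the right-hand side splits into the three explicit pieces, which assemble into the numerator of the main fraction, and the sum over $\calB$, where the Hermitian orthogonality of $\calB_h$ from Lemma~\ref{lem2-7} isolates one diagonal term with denominator $(\eta,\eta)$.

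The first step is to verify that $(dD^\Omega_{[w,w']},\eta)=0$. Since $\eta$ extends to a holomorphic (respectively antiholomorphic) form on $X$ when $[\eta]\in\calB_h$ (respectively $\calB_{ah}$), the form $\overline{\eta}$ is closed, so $d(D^\Omega_{[w,w']}\overline{\eta})=dD^\Omega_{[w,w']}\wedge\overline{\eta}$. Applying Stokes' theorem on the truncation $Y_\epsilon=X(\bfC)\setminus\bigcup_{c\in D}\Delta_{c,\epsilon}$, exactly as in the proof of Lemma~\ref{lem2-7}, the pairing reduces in the limit to the sum of boundary integrals $-\frac{1}{2\pi\sqrt{-1}}\sum_{c\in D}\int_{\partial\Delta_{c,\epsilon}}D^\Omega_{[w,w']}\overline{\eta}$. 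Since $\overline{\eta}$ restricted to $\partial\Delta_{c,\epsilon}$ is $O(\epsilon)$ and $D^\Omega_{[w,w']}$ has at most $(\log|q_c|)^2$ growth near $c$ by its expression as a polynomial in regularized iterated integrals of length $\leq 2$ through Theorem~\ref{thm2-2}, each boundary integral is $O(\epsilon(\log\epsilon)^2)$ and vanishes as $\epsilon\to 0$.

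The second step expands the pairing of $\eta$ against the right-hand side of Lemma~\ref{lem3-3}. The three explicit terms produce $\frac{1}{2}((\calF_{\omega,\omega'},\eta)-\calE_{\omega'}(b)(\calR_\epsilon(\omega),\eta)+\calE_\omega(b)(\calR_{\epsilon'}(\omega'),\eta))$, which, once moved to the other side, is the numerator of the main fraction. For the sum $\sum_{\eta'\in\calB}c_{\eta'}(\overline{\eta'},\eta)$ with $c_{\eta'}:=\la[\omega]\otimes[\omega'],\varphi^{[1,2]}_\infty([\eta']^\vee)\ra$, the pairing $(\overline{\eta'},\eta)$ vanishes by type when $\eta',\eta\in\calB_h\cup\calB_{ah}$ are both holomorphic or both antiholomorphic, since the wedge is then of type $(0,2)$ or $(2,0)$ on the Riemann surface; when they are of opposite holomorphic types, the orthogonality of $\calB_h$ from Lemma~\ref{lem2-7} reduces the contribution to the single diagonal term $c_{\overline{\eta}}(\eta,\eta)$; for $\eta'\in\calB_e$, the pairing is kept as is and produces the correction sum in the conclusion. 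Solving for $c_{\overline{\eta}}$ yields the displayed identity. The particular case is immediate: the hypothesis that $\eta$ is orthogonal to all elements of $\calB_e$ annihilates the correction sum.

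The main obstacle is the boundary estimate for $D^\Omega_{[w,w']}$ near points of $D$; once the polylogarithmic growth bound is justified from the explicit formulas of Section~\ref{BWfct}, the remainder of the proof is a type-by-type bookkeeping exercise with the Hermitian pairing, structurally parallel to the argument used for Theorem~\ref{thm3-1}.
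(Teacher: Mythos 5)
Your proposal is correct and is essentially the paper's own argument: the paper also wedges the identity of Lemma~\ref{lem3-3} with $\overline{\eta}$, integrates over $Y(\bfC)$, kills the left-hand side by Stokes' theorem as in the proof of Lemma~\ref{lem2-7}, and solves for the coefficient of $[\overline{\eta}]^\vee$. Your boundary estimate $O(\epsilon(\log\epsilon)^2)$ and the type/orthogonality bookkeeping simply make explicit the steps the paper leaves to the reader.
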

\begin{proof}
	Since both sides of identity (\ref{eq17}), when wedged with $\overline{\eta}$, have at worst logarithmic singularities, their integrals over $Y(\bfC)$ converge. 
	By Stokes' theorem  (cf.\ the proof of Lemma~\ref{lem2-7}), we obtain
	\[
	\int_{Y(\bfC)}(d D^{\Omega}_{[w,w']}\wedge \overline{\eta})=0.
	\]
	The theorem then follows.
\end{proof}
If we normalize $\calE_{\omega}$ and $\calE_{\omega'}$ as in the previous
subsection, we obtain the equation for the tangential bae point case
by replacing $\calE_{\omega}(b)$ and $\calE_{\omega'}(b)$ with zero.
The computation is very similar to the proof of Theorem~\ref{thm3-2}, so we omit the proof of
this formula.
\begin{remark}
	The matrix coefficient $\la[\omega]\otimes[\omega'],\varphi_{\infty}^{[1,2]}([\eta]^\vee)\ra$
	for $[\omega],[\omega'],[\eta]\in\calB_e$ has not been computed yet.
	This is the remaining subcase of the third case.
\end{remark}
\section{The regulator formulas}\label{reglatorformula}
Let $I_b$ be the augmentation ideal of $\bfQ[\pi_1^{\top}(Y,b)]$.
In this section, we partially determine the isomorphism class of the extension
\[
0\to \rmH_1(Y(\bfC),\bfR)^{\otimes 2}\to I_b/I_b^3\otimes_{\bfQ}\bfR\to \rmH_1(Y(\bfC),\bfR)\to 0
\] 
of $\bfR$-mixed Hodge structures,
based on the computations given in Section~\ref{theformula}.

\subsection{Mixed Hodge structures with infinite Frobenius}\label{MHSwithfrob}
Let $A$ be a subalgebra of $\bfR$. An $A$-mixed Hodge structure with infinite Frobenius (\cite[(2.4)]{Nekovar})
is a tuple \[
H=(H_{\rmB},\varphi_\infty,F^\bullet H_{\rmB,\bfC},W_\bullet H_{\rmB})
\]
where
\begin{itemize}
	\item $(H_{\rmB},F^\bullet H_{\rmB,\bfC},W_\bullet H_{\rmB})$ is an $A$-mixed Hodge structure,
	\item $\varphi_{\infty}$ is an $A$-linear involution on $H_{\rmB}$ such that the semi-linear automorphism $\varphi_{\infty}c_{\rmB}$ on $H_{\rmB,\bfC}:=H_{\rmB}\otimes_{A}\bfC$ preserves the Hodge filtration $F^\bullet H_{\rmB,\bfC}$.
	Here, $c_{\rmB}$ denotes the complex conjugation with respect to the real structure $H_{\rmB}\otimes_A\bfR$ of $H_{\rmB,\bfC}$.
\end{itemize}
We set $H_{\dR,\bfR}:=H_{\rmB,\bfC}^{\varphi_{\infty}c_{\rmB}=1}$ and define an automorphism $c_{\dR}$
to be the complex conjugation on $H_{\dR,\bfC}\cong H_{\rmB,\bfC}$ with respect to the real structure $H_{\dR,\bfR}$.
Let $\mathcal{MH}_{A}^+$ denote the category of
$A$-mixed Hodge structures with infinite Frobenius.

Let $H=(H_{\rmB},\varphi_\infty,F^\bullet,W_\bullet)$
be an object of $\mathcal{MH}_{\bfR}^+$ such that $H_{\rmB}=W_0H_{\rmB}$.
Then it is well known that there is a natural isomorphism
\[
\Ext^1_{\mathcal{MH}_{\bfR}^+}(\bfR(0),H)\cong H_{\rmB}^+\backslash H_{\dR,\bfR}/F^0H_{\dR,\bfR},
\]
where $H_{\rmB}^+$ denote the subspace $H_{\rmB}$ on which $\varphi_\infty$
acts as the identity map (\cite[(2.5)]{Nekovar}).
Since the endomorphism $\varphi_\infty-1$ on $H_{\dR,\bfR}$ induces an isomorphism
\[
H_{\rmB}^+\backslash H_{\dR,\bfR}/F^0H_{\dR,\bfR}\xrightarrow{\ \sim\ }H_{\dR,\bfR}^-/(\varphi_\infty-1)F^0H_{\dR,\bfR},
\]
we obtain a natural isomorphism
\begin{equation}
	\label{regulator'}
	r\colon\Ext^1_{\mathcal{MH}_{\bfR}^+}(\bfR(0),H)\xrightarrow{\ \sim\ }H_{\dR,\bfR}^-/(\varphi_\infty-1)F^0H_{\dR,\bfR}.
\end{equation}
Here, $H_{\dR,\bfR}^-$ is the $(-1)$-eigenspace of $\varphi_\infty$ in $H_{\dR,\bfR}$.

We now describe the isomorphism above more explicitly (cf.\ ~\cite[(2.4)]{Nekovar}).
Let 
\[
0\to H\to \widetilde H\to \bfR(0)\to 0
\]
be an exact sequence in $\mathcal{MH}_{\bfR}^+$, and suppose that we are given a splitting
\[
\widetilde H_{\dR,\bfR}=\bfR\oplus H_{\dR,\bfR}
\]
compatible with both the Hodge and weight filtrations.
Let $e\in \widetilde{H}_{\dR,\bfR}$ denote the element corresponding to $1\in \bfR$
under this splitting.
Then the isomorphism $r$ is given by
\begin{equation}
	r([\widetilde H])=(\varphi_\infty-1)(e)\modulo\ (\varphi_\infty-1)F^0H_{\dR,\bfR}.
\end{equation}
Therefore, in this setting, it suffices to compute the matrix coefficients of $\varphi_\infty$ in order to determine $r([M])$.

For later use, we record two important examples.
Let $X$ be a projective smooth curve defined over $\bfR$. Then it is well known that $H=\rmH^1(X(\bfC),\bfR)$ carries a natural structure of an object of $\mathcal{MH}_{\bfR}^+$, with $H_{\dR,\bfR}=\rmH^1_{\dR}(X/\bfR)$.
Let us take a basis $\calB_0\subset H_{\dR,\bfR}$ as before, and write $\calB_h=\{[\omega_1],\dots,[\omega_n]\}$.
Let $V$ be an object of $\mathcal{MH}_{\bfR}^+$, pure of weight $0$.
We write $V^{\pm}$ for the $\pm$-eigenspaces of $\varphi_{\infty}$ on $V_{\dR,\bfR}$.
\begin{example}
	\label{ex4-0}Let us consider the object $H(2)\otimes V$ in $\mathcal{MH}_{\bfR}^+$.
	Since $F^0(H(2)\otimes V)=0$, we have a natural isomorphism
	\[
r\colon \Ext^1_{\mathcal{MH}_{\bfR}^+}(\bfR(0),H(2)\otimes V)	\isom  H_{\dR,\bfR}^{-}\otimes V^+\oplus H_{\dR,\bfR}^{+}\otimes V^-.
	\]
	For $\varepsilon\in \{\pm\}$ and $v\in V^{\varepsilon}$, we define $e_i\otimes v\in \Ext^1_{\mathcal{MH}_{\bfR}^+}(\bfR(0),H(2)\otimes V)$ to be the image of
	\[
	([\omega_i]-\varepsilon [\overline{\omega_i}])\otimes v
	\] 
	under the isomorphism above.
	Then, for a basis $\{v_l\}_l$ of $V=V^{+}\oplus V^{-}$,
	the set $\{e_i\otimes v_l\}_{i,l}$ forms a basis of $\Ext^1_{\mathcal{MH}_{\bfR}^+}(\bfR(0),H(2)\otimes V)$.
\end{example}
\begin{example}
	\label{ex4-1}
	Next, let us consider the mixed Hodge structure with infinite Frobenius $H^{\otimes 2}(2)\otimes V$.
	Since $\calB_h$ is a basis for $F^1H_{\dR,\bfR}$, a basis of $F^0H^{\otimes 2}(2)_{\dR,\bfR}=F^2H^{\otimes 2}_{\dR,\bfR}$
	is given by $\{[\omega_i]\otimes[\omega_j]\}_{1\leq i,j\leq n}$.
	Moreover, since the set
	\[
	\{[\omega_i]\otimes[\omega_j]+\varepsilon[\overline{\omega}_i]\otimes[\overline{\omega_j}]\ |\ 1\leq i,j\leq n\}\coprod\{[\omega_i]\otimes[\overline{\omega_j}]+\varepsilon [\overline{\omega_i}]\otimes[{\omega_i}]\ |\ 1\leq i,j\leq n\}
	\]
	is a basis for $\left(H^{\otimes 2}_{\dR,\bfR}\right)^{\varepsilon}$,
	we have a natural isomorphism
	\begin{multline}
		\label{eq19}
		r\colon \Ext^1_{\mathcal{MH}_\bfR^+}(\bfR(0),H^{\otimes 2}(2)\otimes V)\\
		\isom \bigoplus_{\varepsilon\in\{+,-\}}\left(\bigoplus_{1\leq i, j\leq n}\bfR([\omega_i]\otimes[\overline{\omega_j}]-\varepsilon[\overline{\omega_i}]\otimes[{\omega_j}])\right)\otimes_{\bfR}V^{\varepsilon}.
	\end{multline}
	For $v\in V^{\varepsilon}$, we define $e_{ij}\otimes v\in \Ext^1_{\mathcal{MH}_\bfR^+}(\bfR(0),H^{\otimes 2}(2)\otimes V)$, for $[\omega_i],[\omega_j]\in\calB_h$,
	to be the image of $([\overline{\omega_i}]\otimes[{\omega_j}]-\varepsilon[{\omega_i}]\otimes[\overline{\omega_j}])\otimes v$ under the isomorphism above.
	Then, similarly to Example~\ref{ex4-0}, these elements span $\Ext^1_{\mathcal{MH}_\bfR^+}(\bfR(0),H^{\otimes 2}(2)\otimes V)$.
	
	Let $v_1,\dots ,v_m$ be a basis for $V_{\dR,\bfR}$ with each $v_i\in V^{\varepsilon}$ for some $\epsilon\in \{+,-\}$.
	Suppose that we are given an extension
	\[
	0\to H^{\otimes 2}(2)\otimes V\to \widetilde H\to \bfR(0)\to 0
	\]
	in $\mathcal{MH}_{\bfR}^+$, equipped with a de Rham splitting $\widetilde H_{\dR,\bfR}=\bfR\oplus H^{\otimes 2}(2)_{\dR,\bfR}\otimes_{\bfR} V_{\dR,\bfR}$.
	Then the extension class $[\widetilde H]\in \Ext^1_{\mathcal{MH}_\bfR^+}(\bfR(0),H^{\otimes 2}(2)\otimes V)$
	is given by $\sum_{1\leq i, j\leq n,1\leq k\leq m}a_{ij,k}e_{ij}\otimes v_k$, where
	the real numbers $a_{ij,k}$ are determined by the equation
	\begin{equation}
		\label{eq20}
		\varphi_{\infty}(1)=1+\sum_{1\leq i,j\leq n,1\leq k\leq m}a_{ij,k}[\omega_i]\otimes[\overline{\omega_j}]\otimes v_k+\cdots.
	\end{equation}
\end{example}

We have a natural isomorphism
\[
\Ext^1_{\mathcal{MH}_{\bfQ}^+}(H,H')\isom\Ext^1_{\mathcal{MH}_{\bfQ}^+}(\bfQ(0),H^\vee\otimes H')
\]
between these two extension groups. This isomorphism may be described explicitly as follows:
Let \[
E\colon 0\to H'\to \widetilde H\to H\to 0
\] be an extension in $\mathcal{MH}_{\bfQ}^+$.
Then the corresponding extension is obtained by pulling back the extension $H^\vee\otimes E$
along the identity map $\bfQ(0)\to H^\vee\otimes H=\underline{\End}(H)$.
We write $\reg_{\bfR,H,H'}$ as the composition of homomorphisms
\begin{multline}
	\label{regulator}
	\reg_{\bfR, H,H'}\colon \Ext^1_{\mathcal{MH}_{\bfQ}^+}(H,H')\isom\Ext^1_{\mathcal{MH}_{\bfQ}^+}(\bfQ(0),H^\vee\otimes H')\\
	\to \Ext^1_{\mathcal{MH}_{\bfR}^+}(\bfR(0),H^\vee\otimes H')
\end{multline}

\subsection{The regulator formulas}\label{regformula}
Let $F$ be a subfield of $\bfR$.
Let $X$ be a projective smooth curve over $F$,
and let $D$ be a non-empty zero-dimensional closed subset of $X$.
Set $Y:=X\setminus D$, and fix an $F$-rational base point $b$ of $Y$.
For each smooth variety $S$ over $F$, we equip $\rmH^i(S(\bfC),\bfQ)$ with Deligne's mixed Hodge structure.
Write $\rmH^i(S)$ for this $\bfQ$-mixed Hodge structure with infinite Frobenius, and denote its dual by $\rmH_i(S)$. 

We denote by $I_b$ the augmentation ideal of the group ring $\bfQ[\pi_1^{\top}(Y,b)]$.
As explained in Section~\ref{MHS} and Section~\ref{BWfuct},
$I_b/I_b^n$ carries a natural $\bfQ$-mixed Hodge structure with infinite Frobenius such that
$I_b/I_b^{n+1}\ontomap I_b/I_b^{n}$ is a morphism in $\mathcal{MH}_{\bfQ}^+$, and such that there is a natural isomorphism
\[
I_b^n/I_b^{n+1}\cong \rmH_1(Y)^{\otimes n}
\]
of $\bfQ$-mixed Hodge structures with infinite Frobenius.
Therefore, we obtain an extension
\[
0\to \rmH_1(Y)^{\otimes 2}\to I_b/I_b^3\to \rmH_1(Y)\to 0
\]
in $\mathcal{MH}_{\bfQ}^+$. We denote this extension class by \[
[I_b/I_b^3]\in \Ext^1_{\mathcal{MH}_{\bfQ}^+}( \rmH_1(Y), \rmH_1(Y)^{\otimes 2}).
\]

To simplify the computation of $\reg_{\bfR,\rmH_1(Y),\rmH_1(Y)^{\otimes 2}} ([I_b/I_b^3])$,
we assume the following condition on $D$:
\begin{center}
	(\textbf{Tor})  The divisor $(d)-(d')$ is torsion in $\mathrm{Jac}(X)$ for all $d,d'\in D(\bfC)$.
\end{center}
Here, $\mathrm{Jac}(X)$ is the Jacobian variety of $X$, and we regard $D$ as a reduced closed subscheme of $X$.
By the assumption above, we have a natural splitting
\[
\rmH_1(Y)=\rmH_1(X)\oplus V(1)
\]
in $\mathcal{MH}_{\bfQ}^+$, where $V$ is the weight zero Hodge structure $\rmH^0(D(\bfC),\bfQ)/\text{diagonals}$.
Therefore, we obtain the following natural isomorphisms in $\mathcal{MH}_{\bfQ}^+$:
\begin{multline}\label{eq28}
	\rmH^1(Y)\otimes \rmH_1(Y)^{\otimes 2}\cong \bigl( \rmH^1(X)\otimes \rmH_1(Y)^{\otimes 2}\bigr)\oplus \bigl(V(1)\otimes \rmH_1(Y)^{\otimes 2}\bigr)\\
	\cong  \rmH^1(X)^{\otimes 3}(2)\oplus\bigl(\rmH^1(X)\otimes \rmH^1(X)(1)\otimes V(1)\bigr)\oplus \bigl(\rmH^1(X)\otimes V(1) \otimes \rmH^1(X)(1)\bigr)\\
	\bigl( \rmH^1(X)\otimes V^{\otimes 2}(2)\bigr)
	\oplus  \bigl(V(1)\otimes \rmH_1(Y)^{\otimes 2}\bigr)
	=:H_1\oplus H_2\oplus H_2'\oplus H_3\oplus H_4.
\end{multline}
According to the splitting above, $\reg_{\bfR,\rmH_1(Y),\rmH_1(Y)^{\otimes 2}}$ also decomposes as
\[
\reg_{\bfR,\rmH_1(Y),\rmH_1(Y)^{\otimes 2}}=\reg_1\oplus\reg_2\oplus \reg_2'\oplus \reg_3\oplus \reg_4,
\]
where $\reg_i$ is a homomorphism to $\Ext^1_{\mathcal{MH}_\bfR^+}(\bfR(0),H_{i,\bfR})$.

Note that $\reg_1=0$ because the target group is zero.
Moreover, if either $\reg_2([I_b/I_b^3])$ or $\reg_2'([I_b/I_b^3])$ can be computed,
then the other one also can be determined by the symmetry of matrix coefficients.

In the remainder of this paper, we compute $\reg_2([I_b/I_b^3])$ and $\reg_3([I_b/I_b^3])$
based on the computation of  matrix coefficient of the second and third cases, respectively.
Finally, we remark that the computation of $\reg_4([I_b/I_b^3])$ corresponds to the remaining subcases in the first and third cases in the previous section.
Thus, the calculation of $\reg_4([I_b/I_b^3])$ remains incomplete.

Take a basis\[
\calB=\calB_h\coprod\calB_{ah}\coprod\calB_e^+\coprod\calB_e^-
\]
of $\rmH^1_{\dR}(Y/\bfR)$, as in Section~\ref{theformula}.
That is, $\calB_h=\{[\omega_1],\dots,[\omega_n]\}$ is an orthogonal basis of $H^0(X,\Omega_{X\times_{\spec(F)}\spec(\bfR)}^1)$
with respect to the pairing
\[
(\alpha,\beta):=\frac{1}{2\pi\sqrt{-1}}\int_{X(\bfC)}\alpha\wedge\overline{\beta},
\]
and $\calB_{ah}$ is defined as $\{[\overline{\omega_i}]\ |\ i=1,\dots,n\}$.
For each $[\omega]\in \calB_e$, choose a smooth function $\calE_{\omega}$ on $Y(\bfC)$, with at worst logarithmic singularities along
$D$ satisfying the equations (\ref{eq29}). If $b$ is a tangential base point, we normalize this function by the condition (\ref{eq18}).

Note that $V_{\dR,\bfR}$ can be identified with the dual $\bfR$-vector space spanned by $\calB_e$. Therefore, we have a natural isomorphism
\[
V_{\dR,\bfR}^{\pm}=\bigoplus_{[\omega]\in \calB_e^{\pm}}\bfR[\omega]^\vee.
\]
For $1\leq i, j\leq n,\ \varepsilon\in \{+,-\}$, and $\omega\in \calB_e$, let $e_{ij}\otimes[\omega]^\vee$ denote the image of $([\overline{\omega_i}]\otimes[{\omega_j}]-\varepsilon[{\omega_i}]\otimes[\overline{\omega_j}])\otimes[\omega]^\vee$
in the extension group $\Ext^1_{\mathcal{MH}_\bfR^+}(\bfR(0),H^1(X)^{\otimes 2}(2)\otimes V)$ (see Example~\ref{ex4-1}).
\begin{theorem}
	\label{thm4-2}Suppose that the condition (\textbf{Tor}) holds for $D$. If $b$ lies over an $F$-rational point of $Y$, then the following identity holds$:$
	\begin{multline}
		\reg_2([I_b/I_b^3])=\sum_{1\leq i,j\leq n,\ i\neq j,\ [\omega]\in \calB_e}\frac{(\calE_{\omega}\omega_j,\omega_i)}{(\omega_i,\omega_i)(\omega_j,\omega_j)}e_{ji}\otimes[\omega]^\vee\\
		+\sum_{1\leq i\leq n,[\omega]\in \calB_e}\frac{((\calE_{\omega}-\calE_{\omega}(b))\omega_i,\omega_i)}{(\omega_i,\omega_i)^2}e_{ii}\otimes[\omega]^\vee.
	\end{multline}
	If $b$ is an $F$-rational tangential base point of $Y$, then the following identity holds$:$
	\begin{equation}
		\reg_2([I_b/I_b^3])=\sum_{1\leq i,j\leq n,\  [\omega]\in \calB_e}\frac{(\calE_{\omega}\omega_j,\omega_i)}{(\omega_i,\omega_i)(\omega_j,\omega_j)}e_{ji}\otimes[\omega]^\vee.
	\end{equation}
\end{theorem}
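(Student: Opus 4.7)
The plan is to translate the matrix-coefficient formulas of Theorems~\ref{thm3-1} and~\ref{thm3-2} (the ``second case'') into the extension-class language of Example~\ref{ex4-1}. Hain's trivialization (Theorem~\ref{thm2-1}) equips the de Rham realization of $I_b/I_b^3$ with a canonical splitting of the $I_b$-adic filtration; tensoring with $\rmH^1(Y)$ and pulling back along the identity $\bfQ(0)\to\rmH^1(Y)\otimes\rmH_1(Y)$ yields a de Rham splitting of the extension $\widetilde H$ whose class is $\reg_{\bfR,\rmH_1(Y),\rmH_1(Y)^{\otimes 2}}([I_b/I_b^3])$. Further projecting onto the $H_2$-summand in~(\ref{eq28}) places us precisely in the setting of Example~\ref{ex4-1} with $H=\rmH^1(X)$, so that $\reg_2([I_b/I_b^3])$ is obtained by expanding $\varphi_\infty(1)$ in this basis.

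Under Hain's splitting, the lift of $1\in\bfR(0)$ is the identity element $\sum_f f^\vee\otimes f\in\rmH^1(Y)\otimes\rmH_1(Y)$ with $f$ running over the basis $\calB$ and $f^\vee$ over the dual basis in $\rmH_1(Y)$. Since $\varphi_\infty$ fixes this identity and is block-triangular on $I_b/I_b^3$ with off-diagonal entry $\varphi_\infty^{[1,2]}$, the off-diagonal part of $\varphi_\infty(1)-1$ equals
\[
\sum_f\varphi_\infty(f)\otimes\varphi_\infty^{[1,2]}(f^\vee)\in\rmH^1(Y)\otimes\rmH_1(Y)^{\otimes 2}.
\]
Restricting to the $H_2$-component keeps only $f\in\calB_0$ (so that $\varphi_\infty(f)\in\rmH^1(X)$) and projects the second tensor factor onto $\rmH_1(X)\otimes V(1)$; this isolates the matrix coefficients $\la[\omega_i]\otimes[\omega],\varphi_\infty^{[1,2]}([\overline{\omega_j}]^\vee)\ra$ (for $[\omega_i]\in\calB_h$, $[\omega]\in\calB_e$, $\overline{\omega_j}\in\calB_{ah}$) and their $\calB_{ah}$-analogues, all of which are supplied by Theorem~\ref{thm3-1}.

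Substituting Theorem~\ref{thm3-1} gives the value $(\calE_\omega\omega_i,\omega_j)/(\overline{\omega_j},\overline{\omega_j})$ for $j\neq i$ and the diagonal correction involving $\calE_\omega(b)$ for $j=i$. The Poincar\'e-duality identification $\rmH^1(X)(1)\cong\rmH_1(X)$ implicit in the splitting of $\rmH_1(Y)$ converts the second-factor basis element $[\overline{\omega_i}]$ used in Example~\ref{ex4-1} into a scalar multiple of $[\omega_i]^\vee$, which introduces the $(\omega_i,\omega_i)$ factor in the denominator; combining this with $(\overline{\omega_j},\overline{\omega_j})=-(\omega_j,\omega_j)$ and the Hermitian-symmetry identity relating $(\calE_\omega\omega_i,\omega_j)$ and $(\calE_\omega\omega_j,\omega_i)$ yields the coefficient $(\calE_\omega\omega_j,\omega_i)/((\omega_i,\omega_i)(\omega_j,\omega_j))$ attached to $e_{ji}\otimes[\omega]^\vee$. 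The tangential-base-point case is identical, invoking Theorem~\ref{thm3-2} in place of Theorem~\ref{thm3-1}; the normalization~(\ref{eq18}) forces $\calE_\omega(b)=0$ and eliminates the diagonal correction term.

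The main obstacle is not the analytic content, which is already encoded in Theorems~\ref{thm3-1} and~\ref{thm3-2}, but the careful bookkeeping of Tate-twist and Poincar\'e-duality identifications that match the basis of Example~\ref{ex4-1} with that of $H_2$ inside $\rmH^1(Y)\otimes\rmH_1(Y)^{\otimes 2}$, together with the accompanying sign and symmetry manipulations needed to bring the resulting expression into the form displayed in the theorem.
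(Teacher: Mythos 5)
Your proposal follows essentially the same route as the paper's proof: Hain's trivialization provides the filtration-compatible de Rham splitting, the class is read off from the off-diagonal part of $\varphi_\infty$ applied to the identity element as in Example~\ref{ex4-1}, the matrix coefficients are imported from Theorem~\ref{thm3-1} (resp.\ Theorem~\ref{thm3-2} in the tangential case), and the Poincar\'e-duality identification $[\omega_j]^\vee\mapsto\frac{1}{(\omega_j,\omega_j)}[\overline{\omega_j}]$ supplies the extra denominators. The only cosmetic difference is your appeal to a ``Hermitian-symmetry'' relation between $(\calE_\omega\omega_i,\omega_j)$ and $(\calE_\omega\omega_j,\omega_i)$, which is not needed---relabelling the summation indices $i\leftrightarrow j$ already brings the expression into the stated form---but this does not affect the argument.
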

\begin{proof}
	To compute $\reg_2([I_b/I_b^3])$, we take a suitable one-form
	$\Omega$ as in Theorem~\ref{thm3-1}.
	By the trivialization $t_{\Omega,b}^b$ arising from $\Omega$, we have a natural splitting
	\[
	(I_b/I_b^3)_{\dR,\bfR}\cong \rmH_1^{\dR}(Y/\bfR)\oplus \rmH_1^{\dR}(Y/\bfR)^{\otimes 2}
	\]
	compatible with the Hodge and weight filtrations.
	According to Theorem~\ref{thm3-1}, when $b$ lies over an $F$-rational point of $Y$, we have
	\begin{multline}
		\varphi_{\infty}([\overline{\omega_i}]^\vee)=[{\omega_i}]^\vee+\sum_{j\neq i,\ [\omega]\in \calB_e}\frac{(\calE_\omega\omega_j,\omega_i)}{(\omega_i,\omega_i)}[\omega_j]^\vee\otimes[\omega]^\vee\\
		+\sum_{[\omega]\in\calB_e}\frac{((\calE_\omega-\calE_\omega(b))\omega_i,\omega_i)}{(\omega_i,\omega_i)}[\omega_i]^\vee\otimes[\omega]^\vee+\cdots.
	\end{multline}
	Therefore, the following identity on $(I_b/I_b^3)_{\dR,\bfR}\otimes_{\bfR} \rmH^1_{\dR}(X/\bfR)$ holds:
	\begin{multline}
		\varphi_{\infty}([\overline{\omega_i}]^\vee\otimes[\overline{\omega_i}])=[{\omega_i}]^\vee\otimes[{\omega}_i]+\sum_{j\neq i,\ [\omega]\in\calB_e}\frac{(\calE_\omega\omega_j,\omega_i)}{(\omega_i,\omega_i)}[\omega_j]^\vee\otimes[\omega]^\vee\otimes[{\omega_i}]\\
		+\sum_{[\omega]\in\calB_e}\frac{((\calE_\omega-\calE_\omega(b))\omega_i,\omega_i)}{(\omega_i,\omega_i)}[\omega_i]^\vee\otimes[\omega]^\vee\otimes[{\omega}_i]+\cdots.
	\end{multline}
	Note that the element $[\omega_j]^\vee$ is sent to $\frac{1}{(\omega_j,\omega_j)}[\overline{\omega_j}]$ under the inverse of the natural isomorphism
	\[
	\rmH^1_{\dR}(X/\bfC)(1)=\rmH^1_{\dR}(X/\bfC)\isom\rmH_1^{\dR}(X/\bfC);\quad \alpha\mapsto \frac{1}{2\pi\sqrt{-1}}\int_{X(\bfC)}\alpha\wedge(-).
	\]
	Since our extension is obtained by pulling back the above mixed Hodge structure via
	\[
	\bfR(0)\to \rmH_1^{\dR}(X/\bfR)\otimes_{\bfR}\rmH^1_{\dR}(X/\bfR);\quad 1\mapsto\sum_{\omega\in \calB_0}[\omega]^\vee\otimes[\omega],
	\]
	we obtain the first assertion of the theorem.
	The tangential base point case can be computed in the same way by applying Theorem~\ref{thm3-2} instead of Theorem~\ref{thm3-1}.
\end{proof}
Next, we calculate $\reg_3([I_b/I_b^3])$.
For each $[\omega],[\omega']\in  \calB_e$, we take a closed one-form
\[
\calF_{\omega,\omega'}
\]
defined by the equation (\ref{eq27}).
Moreover, for simplicity, we assume the following condition:
\begin{center}
	(\textbf{Orth}) The basis $\calB_e$ is orthogonal to $\calB_0$ with respect to the
	pairing $(\ ,\ )$.
\end{center}
We define the element \[
e_i\otimes[\omega]^\vee\otimes[\omega']^\vee\in \Ext^1(\bfR(0),H^1(X)(2)\otimes V^{\otimes 2})
\]
to be the image of $([\omega_i]-\varepsilon(\omega,\omega')[\overline{\omega_i}])\otimes [\omega]^\vee\otimes[\omega']^\vee$ (see Example~\ref{ex4-0}).
\begin{theorem}
	\label{thm4-3}Under the condition (\textbf{Orth}), we have the following identity$:$
	\[
	\reg_3([I_b/I_b^3])=-\frac{1}{2}\sum_{1\leq i\leq n,\ [\omega],[\omega']\in \calB_e}\frac{(\calF_{\omega,\omega'},\omega_i)}{(\omega_i,\omega_i)}e_i\otimes [\omega]^\vee\otimes[\omega']^\vee.
	\]
\end{theorem}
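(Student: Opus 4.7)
The strategy mirrors the proof of Theorem~\ref{thm4-2}, but draws on the third-case matrix coefficient computation (Theorem~\ref{thm3-3}) in place of the second (Theorem~\ref{thm3-1}). First I choose a one-form $\Omega$ as in Subsection~\ref{expl} satisfying the conditions of Proposition~\ref{prop2-1}, with $\Omega^{(1)}=\sum_{[\omega]\in\calB}[\omega]^\vee\otimes\omega$. The Hain trivialization $t_{\Omega,b}^b$ then provides a natural splitting
\[
(I_b/I_b^3)_{\dR,\bfR}\cong \rmH_1^{\dR}(Y/\bfR)\oplus \rmH_1^{\dR}(Y/\bfR)^{\otimes 2}
\]
compatible with both the Hodge and weight filtrations. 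Under the standard adjunction
\[
\Ext^1_{\mathcal{MH}_\bfQ^+}(\rmH_1(Y),\rmH_1(Y)^{\otimes 2})\cong \Ext^1_{\mathcal{MH}_\bfQ^+}(\bfQ(0),\rmH^1(Y)\otimes \rmH_1(Y)^{\otimes 2}),
\]
the class $[I_b/I_b^3]$ corresponds to the pull-back of $\rmH^1(Y)\otimes [I_b/I_b^3]$ along the identity element of $\End(\rmH_1(Y))$; through the Hain splitting, the de Rham lift of $1\in \bfR(0)$ is the diagonal element $e:=\sum_{[\eta]\in\calB}[\eta]\otimes[\eta]^\vee$ sitting inside $\rmH^1(Y)\otimes (I_b/I_b^3)_{\dR,\bfR}$.

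Next I compute the $H_3$-component of $\varphi_\infty(e)-e$. Only $\eta\in\calB_0$ can contribute, because $\varphi_\infty$ exchanges $\calB_h$ and $\calB_{ah}$ inside $\rmH^1(X)$ while mapping each element of $\calB_e$ to a scalar multiple of itself in $V(-1)$. Hypothesis (\textbf{Orth}) makes every $\eta\in\calB_0$ orthogonal to $\calB_e$, so Theorem~\ref{thm3-3} applies in its clean form
\[
\la[\omega]\otimes[\omega'],\varphi_\infty^{[1,2]}([\overline{\eta}]^\vee)\ra=-\frac{1}{2}\frac{(\calF_{\omega,\omega'},\eta)}{(\eta,\eta)}
\]
for all $[\omega],[\omega']\in\calB_e$ and $[\eta]\in\calB_0$. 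Collecting the contributions from $\eta=\omega_i\in\calB_h$ and $\eta=\overline{\omega_i}\in\calB_{ah}$, the $H_3$-piece of $\varphi_\infty(e)-e$ reads
\[
-\frac{1}{2}\sum_{i,[\omega],[\omega']\in\calB_e}\left(\frac{(\calF_{\omega,\omega'},\omega_i)}{(\omega_i,\omega_i)}[\omega_i]+\frac{(\calF_{\omega,\omega'},\overline{\omega_i})}{(\overline{\omega_i},\overline{\omega_i})}[\overline{\omega_i}]\right)\otimes[\omega]^\vee\otimes[\omega']^\vee.
\]

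Since $F^0 H_3=0$ (because $F^2\rmH^1(X)=0$ on a curve and $V$ is pure of type $(0,0)$), the regulator isomorphism (\ref{regulator'}) identifies the vector above directly with $\reg_3([I_b/I_b^3])$ in $(H_3)_{\dR,\bfR}^-$. Rewriting it in the basis of Example~\ref{ex4-0} (applied with $V$ replaced by $V^{\otimes 2}$), where $e_i\otimes[\omega]^\vee\otimes[\omega']^\vee$ is the image of $([\omega_i]-\varepsilon(\omega,\omega')[\overline{\omega_i}])\otimes[\omega]^\vee\otimes[\omega']^\vee$, then yields the asserted coefficient $-\frac{1}{2}(\calF_{\omega,\omega'},\omega_i)/(\omega_i,\omega_i)$. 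The step I expect to be the principal obstacle is the verification that the two summands inside the parentheses above really do combine into a scalar multiple of the eigenvector $[\omega_i]-\varepsilon(\omega,\omega')[\overline{\omega_i}]$; this reduces to the compatibility
\[
\frac{(\calF_{\omega,\omega'},\overline{\omega_i})}{(\overline{\omega_i},\overline{\omega_i})}=-\varepsilon(\omega,\omega')\frac{(\calF_{\omega,\omega'},\omega_i)}{(\omega_i,\omega_i)},
\]
which in turn is forced by the defining formula (\ref{eq27}) for $\calF_{\omega,\omega'}$, the behaviour of $\calR_\pm$ under complex conjugation, and the general fact that $\varphi_\infty(e)-e$ lies a priori in the $(-1)$-eigenspace.
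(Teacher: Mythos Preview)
Your proposal is correct and follows exactly the approach the paper indicates: the paper's own proof is a one-line remark that the argument proceeds just as for Theorem~\ref{thm4-2}, using Theorem~\ref{thm3-3} in place of Theorem~\ref{thm3-1}, and that is precisely what you do. Your write-up is in fact more detailed than the paper's, and your closing observation that $(\varphi_\infty-1)(e)$ automatically lies in the $(-1)$-eigenspace (because $\varphi_\infty^2=\rid$) is the right way to see that the two summands assemble into a multiple of $[\omega_i]-\varepsilon(\omega,\omega')[\overline{\omega_i}]$.
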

\begin{proof}The proof proceeds in a very similar manner to that of Theorem~\ref{thm4-2}.
	Instead of Theorem~\ref{thm3-1}, we use Theorem~\ref{thm3-3} here.
\end{proof}
\begin{remark}
	It is well known that when $b$ lies over an $F$-rational point of $Y$, the quotient $I_b/I_b^3$ is the mixed Hodge realization of a certain mixed motive over $F$
	in the sense of Nori (\cite[Theorem 16.4]{HuM17}).
	Therefore, Theorem~\ref{thm4-2} and Theorem~\ref{thm4-3} may be regarded as regulator formulas
	for certain Nori motives over $F$.
\end{remark}

\section{The modular curve $Y_0(N)$}
For a positive integer $N$, $[\Gamma_0(N)]$ denotes the moduli problem classifying elliptic curves with a cyclic subgroup of order $N$. According to~\cite{KM84}, this moduli problem has a coarse moduli schemes $Y_0(N)_{\bfZ}$ over $\bfZ$. Let $Y_0(N)$ denote the base change of this scheme to $\bfQ$,
which is a smooth affine curve over $\bfQ$.
In this section, we focus on the modular curve $Y_0(N)$.

Let $b$ be an $\bfR$-rational base point of $Y_0(N)$, and let $I_b$
denote the augmentation ideal of $\bfQ[\pi_1(Y_0(N)(\bfC),b)]$.
In this section, we compute $\reg_i([I_b/I_b^3])$ for $i=2,3$.
Fix a positive integer $N$ greater than two.
We write $X_0(N)$ as the smooth compactification of $Y_0(N)$ over $\bfQ$.
By Drinfeld--Manin's theorem, the pair $(X_0(N), Y_0(N))$ satisfies the condition (\textbf{Tor}).

Let $M_2(\Gamma_0(N))$ (resp.\ $S_2(\Gamma_0(N))$) denote the space of modular forms (resp.\ cusp forms) of weight two and level $N$ with the trivial character. 
For a modular form $f\in M_2(\Gamma_0(N))$, we write $\omega_f$ for the differential form on the upper half-plane $\frakH$ defined by
\begin{equation}
	\label{eq33}
	\omega_f=2\pi\sqrt{-1}f(\tau)d\tau,
\end{equation}
where $\tau$ is the standard coordinate on $\frakH$.
Via the complex uniformization
\[
\Gamma_0(N)\backslash	\frakH\isom Y_0(N)(\bfC);\quad \tau\mapsto \left[\bfC/(\bfZ \tau+\bfZ),\left\la \frac{1}{N}\right\ra\right],
\]
we regard $\omega_f$ as a differential form on $Y_0(N)$.

\subsection{Eisenstein series}\label{eis}
Let $G=\GL_2/\bfZ$ and let $B$ be the Borel subgroup of $G$ consisting of upper-triangular matrices.
We denote by $U$ the unipotent radical of $B$.
Let $\bfA_{\rmf}$ (resp.\ $\bfA$) denote the ring of finite adeles (resp.\ adeles).
Let \[
\delta_{\rmf}\colon B(\bfA_{\rmf})\to \bfR_+^\times
\] 
be the character defined by
\[
\delta_{\rmf}\left(\begin{bmatrix}
	a&b\\0&d
\end{bmatrix}\right)=\left|\frac{a}{d}\right|_{\rmf},
\]
where $|\ |_{\rmf}$ denotes the usual norm on $\bfA_{\rmf}$.

Let $K_{\rmf}=K_0(N)$ be the compact open subgroup of $G(\bfA_{\rmf})$ defined by
\[
K_{\rmf}=\left\{\begin{bmatrix}
	a&b\\
	c&d\end{bmatrix}\in G(\widehat{\bfZ})\ \middle |\ c\in N\widehat{\bfZ}\right\}.
\]
Let $K_\infty$ be $\rmO(2)$.
Then we have well-known uniformizations
\[
G(\bfQ)\backslash G(\bfA)/Z(\GL_2(\bfR))K_\infty K_{\rmf}\isom Y_0(N)(\bfC)
\]
and
\[
U(\widehat{\bfZ})\backslash G(\widehat{\bfZ})/K_{\rmf}\isom (X_0(N)\setminus Y_0(N))(\bfC)
\]
(see~\cite[3.0.1]{SS}).
Put \[
\Cusp:=U(\widehat{\bfZ})\backslash G(\widehat{\bfZ})/K_{\rmf}
\]
and we call
an element of $\Cusp $ a \emph{cusp}.
The cusp represented by $g\in G(\widehat{\bfZ})$ is denoted by $[g]$.
We sometimes regard $[g]$ as an open subset of $G(\widehat{\bfZ})$.
The standard cusp $\infty$ is the cusp represented by $2\times 2$ identity matrix.

For a cusp $c=[g]\neq \infty$, define a locally constant function $\phi_c\colon G(\widehat{\bfZ})\to \bfQ$ by
\[
\phi_c=\mathrm{char}_{\infty}- \#\left( U(\widehat{\bfZ})gK_{\rmf}/K_{\rmf}\right)^{-1}\mathrm{char}_{[g]},
\]
where $\mathrm{char}_C$ denotes the characteristic function associated to an open subset $C$ of $G(\widehat{\bfZ})$.
Let $B(\bfQ)^+$ be the subgroup of $B(\bfQ)$ consisting of elements with positive determinant.
Then, by the Iwasawa decomposition, we have \[
G(\bfA_{\rmf})=B(\bfQ)^+G(\widehat{\bfZ}).
\]
For each complex number $s$, define the function $\widehat{\phi}_{c,s}\colon G(\bfA_{\rmf})\to \bfC$ by
\[
\widehat{\phi}_{c,s}(bk)=\delta_{\rmf}(b)^{2s}\phi_{c}(k),\quad b\in B(\bfQ)^+,\quad k\in G(\widehat{\bfZ}).
\]
This map is well-defined. Indeed,
if $bk=b'k'$ for $b,b'\in B(\bfQ)^+,\ k,k'\in G(\widehat{\bfZ})$, we have $b^{-1}b',\ k^{-1}k'\in \pm U(\bfZ)$, because $B(\bfQ)^+\cap G(\widehat{\bfZ})=\pm U(\bfZ)$.
Therefore, we have \[
\delta_{\rmf}(b)=\delta_{\rmf}(b'),\quad \phi_c(k)=\phi_c(k').
\]

The \emph{Eisenstein series $\calE_{c}(-,s)\colon G(\bfA)\to \bfC$ associated with $\phi_{c}$ of weight zero} is defined by the Poincar\'e series
\[
\calE_{c}(h,s)=-4\pi\sum_{\gamma\in B(\bfQ)^+\backslash G(\bfQ)}\widehat{\phi}_{c,s}(\gamma h_{\rmf})I(\gamma h_\infty)^s
\]
(see~\cite[(3.1.4)]{SS}), where $h=(h_{\infty},h_{\rmf})$ is the decomposition of $h$ corresponding to $G(\bfA)=G(\bfR)\times G(\bfA_{\rmf})$, and $I(h_\infty)$ is defined by the equation
\[
I(h_\infty)=\frac{1}{2}
(	\mathrm{Im}(h_\infty\sqrt{-1})+\mathrm{sgn}(\mathrm{Im}(h_\infty\sqrt{-1}))\mathrm{Im}(h_\infty\sqrt{-1})).
\]
This series converges absolutely if $\mathrm{Re}(s)>2$, and admits an analytic continuation to the entire $s$-plane
by the classical Hecke trick (\cite[(3.1.7)]{SS}; cf.\ ~\cite[Section 7.2]{Miyake}).
For any fixed $s$, we regard this as a function on the upper-half plane $\frakH$ and write $\calE_c(\tau,s)$ for this function,
where $\tau$ is the standard coordinate of $\frakH$.
\begin{definition}
	For each $c\in \Cusp$, we define the smooth $(1,0)$-form $\eta_c$ on $\frakH$ by\[
	\eta_c=\partial_{\tau}\calE_c(\tau,1).
	\]
\end{definition}
Note that $\eta_c$ is holomorphic, since $\calE_c(\tau,1)$ is harmonic (\cite[(3.1.7)]{SS}).
We define the modular form $E_c$ of weight two and level $\Gamma_0(N)$ by the equation $\eta_c=\omega_{E_c}$.
By definition, we have $d\calE_c(\tau,1)=2\mathrm{Re}(\eta_c)$.
\begin{proposition}\label{prop5-0}
	The set $\{E_c\}_{c\in \Cusp,\ c\neq \infty}$ is a basis of the space $M_2(\Gamma_0(N))/S_2(\Gamma_0(N))$ of Eisenstein series.
\end{proposition}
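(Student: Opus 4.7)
The plan is to derive a clean formula for the constant term of $E_c$ at each cusp $c'$, from which linear independence, and hence the basis property, will follow by a dimension count. First I would note that $M_2(\Gamma_0(N))/S_2(\Gamma_0(N))$ has $\bfC$-dimension $|\Cusp|-1$ by the classical theory of weight-two Eisenstein series: the constant-term map $M_2(\Gamma_0(N))\to \bfC^{\Cusp}$ has kernel $S_2(\Gamma_0(N))$ and image of codimension one, the one relation coming from the vanishing of the sum of residues on $X_0(N)$. Since $\{E_c\}_{c\neq\infty}$ has the same cardinality $|\Cusp|-1$, it suffices to establish linear independence modulo $S_2(\Gamma_0(N))$. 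I would also verify that each $E_c$ is a genuine holomorphic modular form of weight two: since $\calE_c(\tau,1)$ is a $\Gamma_0(N)$-invariant harmonic real-analytic function on $\frakH$, its $(1,0)$-derivative $\eta_c$ is a holomorphic one-form on $Y_0(N)(\bfC)$, and $E_c$ defined by $\omega_{E_c}=\eta_c$ is a weight-two form; holomorphicity at the cusps will follow from the Fourier expansion in the next step.

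The heart of the argument is to compute the Fourier expansion of $\calE_c(\tau,1)$ at each cusp $c'\in\Cusp$. By the standard unfolding of a parabolic Eisenstein series via the Bruhat decomposition, followed by analytic continuation to $s=1$, one should obtain an expansion of the form
\[
\calE_c(\tau,1) = -4\pi\,\phi_c(c')\, y + A_{c',c} + O(e^{-2\pi y/w_{c'}})
\]
in the local coordinate at $c'$, where $y=\mathrm{Im}(\tau)$, $w_{c'}$ is the cusp width, and $A_{c',c}$ is a constant arising from the intertwining contribution. The absence of a pole as $s\to 1$ is guaranteed by the construction of $\phi_c$ as a signed difference of characteristic functions, which kills the residue of the standard parabolic Eisenstein series. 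Applying $\partial_\tau$ and using $\partial_\tau y = -i/2$ together with the normalization $\omega_f = 2\pi\sqrt{-1}\, f\, d\tau$ then yields
\[
E_c = \phi_c(c') + O(e^{-2\pi y/w_{c'}})
\]
at the cusp $c'$, so the constant term of $E_c$ at $c'$ equals exactly $\phi_c(c')$.

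The linear independence now follows at once: by definition $\phi_c(c')=0$ for $c'\notin\{\infty,c\}$ and $\phi_c(c) = -(\#U(\widehat{\bfZ})gK_\rmf/K_\rmf)^{-1}\neq 0$, so the square matrix $[\phi_c(c')]_{c,c'\neq\infty}$ of constant terms at the finite cusps different from $\infty$ is diagonal with nonzero diagonal entries. Combined with the dimension count above, this proves the proposition.

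The main obstacle will be the Fourier expansion computation at a general cusp: one must carefully justify specialization to $s=1$ (where the ``bare'' parabolic Eisenstein series attached to $\mathrm{char}_{[g]}$ alone has a pole, cancelled by the subtraction of $\mathrm{char}_\infty$ built into $\phi_c$) and track the normalization factor $-4\pi$ to produce the clean identification of the linear-in-$y$ coefficient with $-4\pi\phi_c(c')$. This is standard but technical, and depends essentially on the specific design of $\phi_c$ as a cancellation of characteristic functions at two cusps.
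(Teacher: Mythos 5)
Your proposal is correct and follows essentially the same route as the paper: both arguments rest on the fact that the constant term of $E_c$ vanishes at every cusp other than $c$ and $\infty$ and is nonzero at $c$, giving a diagonal (hence invertible) constant-term matrix indexed by the cusps different from $\infty$, and then conclude by the dimension count $\dim M_2(\Gamma_0(N))/S_2(\Gamma_0(N))=|\Cusp|-1$. The only difference is that you propose to derive the constant terms by unfolding $\calE_c(\tau,s)$ and specializing at $s=1$, whereas the paper simply cites \cite[(3.1.7)]{SS} (cf.\ \cite{DS}) for the nonvanishing statement; your extra normalization bookkeeping is harmless since the argument only needs the vanishing/nonvanishing pattern.
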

\begin{proof}Let $c'$ be a cusp. Then, it is easily checked that $\lim_{\tau\to c'}E_c(\tau)$ is non-vanishing if and only if $c'$ is $c$ or $\infty$ (~\cite[(3.1.7)]{SS}; cf.\ ~\cite[Exercise 4.2.3, Section 4.6]{DS}).
	Therefore, the set $\{E_c\}_{c\in \Cusp,\ c\neq \infty}$ is linearly independent over $\bfC$.
	Thus, we obtain the conclusion by dimension counting.
\end{proof}

\subsection{Rankin's trick}
For $f\in M_2(\Gamma_0(N))$, we write $\varphi_f$ for the corresponding automorphic form on $G(\bfA)$. Explicitly, we have
\[
\varphi_f(\gamma z g_\infty k)=f(g_\infty\sqrt{-1})j(g_\infty,\sqrt{-1})^2,\quad \gamma\in G(\bfQ),\ z\in Z(G(\bfR)),\ g_\infty\in \SL_2(\bfR),\ k\in K_{\rmf}.
\]Let
\[
\psi\colon \bfA\to \bfC^\times
\]
be the additive character on $\bfA$ defined by
\[
\psi_p\colon \bfQ_p\ontomap \bfQ_p/\bfZ_p\xrightarrow{\ \exp(2\pi\sqrt{-1}(-))\ }\bfC^\times
\]
and
\[
\psi_\infty(a):=\exp(2\pi\sqrt{-1}a),\quad a\in \bfR.
\]
The symbol $W_f=W_{f,\infty}\otimes W_f^{\rmf}\colon G(\bfA)\to \bfC$ denotes the Whittaker function associated with $\varphi_f$
and $\psi$.

For $c\in \Cusp$, and for $f,g\in M_2(\Gamma_0(N))$ such that both are normalized and one of them is cuspidal,
we define the function $I(s;f,g;c)$ in terms of the complex parameter $s$ by
\[
I(s;f,g;c):=\int_{\bfA^{\times}_{\rmf}\times G(\widehat{\bfZ})}\phi_c(k)W_f^{\rmf}\left(\begin{bmatrix}
	a&0\\
	0&1
\end{bmatrix}k\right)\overline{W_g^{\rmf}}\left(\begin{bmatrix}
	a&0\\
	0&1
\end{bmatrix}k\right)|a|_{\rmf}^{s-1}d^\times adk,
\]
where $dk $ is the Haar measure of $G(\bfA_{\rmf})$normalized so that the total volume of $G(\widehat{\bfZ})$ is one.
This integral converges if $\mathrm{Re}(s)>1$.
\begin{proposition}[{Rankin's trick,~\cite[Proposition 5.1.0]{SS}; cf.\ ~\cite[Proposition 3.8.2]{Bump}}]For each $f$, $g$, and $c$ as above, we have
	\begin{equation}
		(\calE_c(\tau,s)\omega_f,\omega_g)=-2\pi\sqrt{-1}\frac{\Gamma(s+1)}{(4\pi)^{s-1}}|G(\widehat{\bfZ}):K_0(N)|I(s;f,g;c)\label{eq30}
	\end{equation}
	for all complex number $s$ such that $\mathrm{Re}(s)>1$.
	\label{prop5-1}
\end{proposition}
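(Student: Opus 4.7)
The plan is to run the classical Rankin--Selberg unfolding in adelic language, and then read off the archimedean Mellin transform and the finite-adelic integral separately. First I would rewrite the left-hand side adelically. Using the complex uniformization $\Gamma_0(N)\backslash \frakH \cong Y_0(N)(\bfC)$ together with (\ref{eq33}), a direct computation gives
\[
(\calE_c(\tau,s)\omega_f,\omega_g)=-4\pi\int_{\Gamma_0(N)\backslash \frakH}\calE_c(\tau,s)f(\tau)\overline{g(\tau)}\,dx\,dy,
\]
and the right-hand integral equals (up to a constant depending only on the Haar-measure normalization and $[G(\widehat{\bfZ}):K_0(N)]$)
\[
\int_{G(\bfQ)Z(\bfR)\backslash G(\bfA)/K_\infty K_0(N)}\calE_c(g,s)\varphi_f(g)\overline{\varphi_g(g)}\,dg.
\]

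Next I would unfold $\calE_c$ using its definition as the Poincar\'e series over $B(\bfQ)^+\backslash G(\bfQ)$. Since $\widehat{\phi}_{c,s}$ and $I(\,\cdot\,)^s$ are left $B(\bfQ)^+$-invariant (in the appropriate sense), the sum collapses and the domain of integration enlarges from $G(\bfQ)Z\backslash G(\bfA)$ to $B(\bfQ)^+Z\backslash G(\bfA)$. I would then use the Whittaker--Fourier expansion
\[
\varphi_f(g)=\sum_{\alpha\in\bfQ^\times}W_f\!\left(\begin{bmatrix}\alpha&0\\0&1\end{bmatrix}g\right)
\]
to absorb the remaining $U(\bfQ)\backslash U(\bfA)$ quotient: the $U(\bfA)$-integral of $\overline{\varphi_g}$ against $\psi^{-1}$ produces $\overline{W_g}$, so that after the standard manipulation the integral becomes an integration over $U(\bfA)Z(\bfA)\backslash G(\bfA)$ of $\widehat{\phi}_{c,s}\cdot I^s\cdot W_f\cdot \overline{W_g}$.

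Now I would apply the Iwasawa decomposition $G(\bfA)=U(\bfA)A(\bfA)K_\infty K_{\rmf}$ to split this into an archimedean integral over $A(\bfR)^+\cong\bfR_+^\times$ and a finite-adelic integral over $\bfA_{\rmf}^\times\times G(\widehat{\bfZ})$ with respect to the measure $d^\times a\,dk$. The finite-adelic piece is visibly $I(s;f,g;c)$, multiplied by the index $[G(\widehat{\bfZ}):K_0(N)]$ coming from the passage from the $K_0(N)$-invariant to $G(\widehat{\bfZ})$-integrals. For the archimedean piece, inserting the explicit formula for the weight-two holomorphic Whittaker function $W_{f,\infty}\bigl(\begin{smallmatrix}a&0\\0&1\end{smallmatrix}\bigr)=a e^{-2\pi a}$ for $a>0$ (and $0$ otherwise) together with $I(a)=a$ reduces the archimedean integral to a Mellin transform of the form
\[
\int_0^\infty e^{-4\pi a}a^{s}\,d^\times a=\frac{\Gamma(s+1)}{(4\pi)^{s+1}},
\]
which after combining with the constant $-4\pi$ from the Poincar\'e series and the factor $-2\pi\sqrt{-1}$ arising from the identification of the inner product with the adelic integral gives the asserted gamma factor $\Gamma(s+1)/(4\pi)^{s-1}$.

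The main obstacle will be bookkeeping: the precise constant in front of $I(s;f,g;c)$ is very sensitive to the choices of Haar measure on $\bfA$, $\bfA^\times$, $K_{\rmf}$, and to the normalization of $\varphi_f$ and $W_f$ made in the paper. All the integrals converge for $\mathrm{Re}(s)>2$, which is where the unfolding is justified (there is no issue with the $s=1$ specialization here since the identity is asserted for $\mathrm{Re}(s)>1$ only, avoiding the pole of $\calE_c$ at $s=1$ for $c=\infty$); the meromorphic continuation of both sides then forces the identity on the common domain of holomorphy.
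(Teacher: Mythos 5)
Your proposal is the standard Rankin--Selberg unfolding argument, which is exactly the route taken by the sources the paper cites for this statement ([SS, Proposition 5.1.0] and [Bump, Proposition 3.8.2]); the paper itself offers no independent proof, so your approach matches the intended one. One bookkeeping slip: as literally written, $\int_0^\infty e^{-4\pi a}a^{s}\,d^\times a$ equals $\Gamma(s)/(4\pi)^{s}$, not $\Gamma(s+1)/(4\pi)^{s+1}$; the extra power of $a$ needed to produce $\Gamma(s+1)$ comes from the two archimedean Whittaker factors $W_{f,\infty}\overline{W_{g,\infty}}=a^2e^{-4\pi a}$ combined with the quotient-measure modulus, and tracking this (together with where the $-2\pi\sqrt{-1}$ and $-4\pi$ actually enter) is precisely the constant-chasing you flag.
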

The symbols $\calW(\pi_{f,p},\psi_p)$ and $\calW(\pi_{g,p},\psi_p)$ denote the Whittaker models of $\pi_{f,p}$ and $\pi_{g,p}$ associated with $\psi_p$, respectively.
Then, by the tensor product theorem~\cite[Theorem 3.3.3]{Bump}, we have a partial tensor product decomposition
\[
W_f=\otimes'_{v\ \nmid\ N}W_{f,v}\otimes W_{f,N},\quad W_g=\otimes'_{v\ \nmid\ N}W_{g,v}\otimes W_{g,N}
\]
of Whittaker functions, where $W_{f,p}$ and $W_{g,p}$ are the unique normalized spherical vectors in $\calW(\pi_{f,p},\psi_p)$ and $\calW(\pi_{g,p},\psi_p)$, respectively.
Let $\phi_{c,N}\colon \prod_{p|N}G(\bfZ_p)\to \bfC$ be the locally constant function satisfying $\phi_c=\mathrm{char}_{\prod_{p\nmid N}G(\bfZ_p)}\times \phi_{c,N}$.
Put
\begin{equation*}
	I_N(s;f,g;c):=\int_{\prod_{p|N}(\bfZ_p^{\times}\times G(\bfZ_p))}\phi_{c,N}(k)W_{f,N}\left(\begin{bmatrix}
		a&0\\
		0&1
	\end{bmatrix}k\right)\overline{W_{g,N}}\left(\begin{bmatrix}
		a&0\\
		0&1
	\end{bmatrix}k\right)|a|_N^{s-1}d^\times adk,
\end{equation*}
where $|a|_N:=\prod_{p|N}|a_p|_p$.

For each prime number $p$ coprime to $N$, $L_p(s,\pi_f\times \pi_g)$ is given by
\begin{equation}
	\label{eq32}
	L_p(s,\pi_f\times \pi_g)=\frac{1}{(1-\alpha\beta p^{-s})(1-\alpha'\beta p^{-s})(1-\alpha\beta' p^{-s})(1-\alpha'\beta' p^{-s})},
\end{equation}
where $\{\alpha,\alpha'\}$ and $\{\beta,\beta'\}$ are the Satake parameters of $\pi_{f,p}$ and $\pi_{g,p}$, respectively.
Then, $L^{(N)}(s,\pi_f\times\pi_g)$ is defined by
\begin{equation}
	\label{eq31}
	L^{(N)}(s,\pi_f\times\pi_g)=\prod_{p\nmid N}L_p(s,\pi_f\times\pi_g).
\end{equation}
It is easily checked that this Euler product converges if $\mathrm{Re}(s)>1$. Moreover, according to~\cite[Proposition 3.8.4]{Bump}, $L^{(N)}(s,\pi_f\times \pi_g)$ has a meromorphic continuation to $\bfC$. It is holomorphic if and only if $\pi_f\not\cong\breve{\pi}_g$, and has a simple pole at $s=1$ if $\pi_f\cong\breve{\pi}_g$ (\cite[Proposition 3.8.5, p.375]{Bump}).
The following proposition is well known.
\begin{proposition}
	\label{prop5-2}
	We have the identity
	\[
	I(s;f,g;c)=\zeta^{(N)}(2s)^{-1}L^{(N)}(s,\pi_f\times\pi_g)I_N(s;f,g,c),
	\]
	where $\zeta^{(N)}(s):=\prod_{p\nmid N}(1-p^{-s})^{-1}$.
\end{proposition}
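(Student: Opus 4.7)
The plan is to exploit the Euler product structure coming from the factorization of the Whittaker function and of $\phi_c$. First, I would observe that, by construction, $\phi_c=\mathrm{char}_{\prod_{p\nmid N}G(\bfZ_p)}\times \phi_{c,N}$, and the Whittaker functions factor as $W_f=\otimes'_{p\nmid N}W_{f,p}\otimes W_{f,N}$ and similarly for $g$. Since the measure on $\bfA_{\rmf}^\times\times G(\bfA_{\rmf})$ is a restricted product of local measures, the integral $I(s;f,g;c)$ factors as
\[
I(s;f,g;c)=\left(\prod_{p\nmid N}I_p(s;f,g)\right)\cdot I_N(s;f,g;c),
\]
where
\[
I_p(s;f,g):=\int_{\bfQ_p^\times\times G(\bfZ_p)}W_{f,p}\!\left(\begin{bmatrix}a&0\\0&1\end{bmatrix}k\right)\overline{W_{g,p}}\!\left(\begin{bmatrix}a&0\\0&1\end{bmatrix}k\right)|a|_p^{s-1}\,d^\times a\,dk.
\]

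Next, at a prime $p\nmid N$, both $W_{f,p}$ and $W_{g,p}$ are right $K_p$-invariant with $K_p=G(\bfZ_p)$, so the inner integral over $k\in G(\bfZ_p)$ contributes only the normalized volume $1$, reducing $I_p(s;f,g)$ to the one-dimensional torus integral
\[
I_p(s;f,g)=\int_{\bfQ_p^\times}W_{f,p}\!\begin{bmatrix}a&0\\0&1\end{bmatrix}\overline{W_{g,p}}\!\begin{bmatrix}a&0\\0&1\end{bmatrix}|a|_p^{s-1}\,d^\times a.
\]
The classical unramified Rankin--Selberg computation (carried out via the Casselman--Shalika formula; see \cite[Proposition 3.8.1]{Bump}) evaluates this as
\[
I_p(s;f,g)=\frac{L_p(s,\pi_f\times\pi_g)}{\zeta_p(2s)},
\]
where $\zeta_p(s)=(1-p^{-s})^{-1}$ and $L_p(s,\pi_f\times\pi_g)$ is given by \eqref{eq32}. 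Note that this uses the fact that $\overline{W_{g,p}}$ is itself the normalized spherical Whittaker function for the contragredient representation $\breve{\pi}_{g,p}$, together with $L_p(s,\pi_f\times\pi_g)=L_p(s,\pi_f\times\breve{\pi}_g)$ for unramified $\pi_g$ with trivial central character.

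Taking the Euler product over all $p\nmid N$ of the identity above, and using the definitions \eqref{eq31} and $\zeta^{(N)}(s)=\prod_{p\nmid N}\zeta_p(s)$, yields
\[
\prod_{p\nmid N}I_p(s;f,g)=\zeta^{(N)}(2s)^{-1}L^{(N)}(s,\pi_f\times\pi_g).
\]
Combining with the factorization above gives the desired identity. The only substantive point is the unramified local calculation, which is the standard Rankin--Selberg integral computation and is the main technical input; convergence of the Euler product and of the torus integral is guaranteed in the region $\mathrm{Re}(s)>1$ by the absolute convergence statement already recorded for $I(s;f,g;c)$.
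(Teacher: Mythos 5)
Your proof is correct and follows essentially the same route as the paper, which treats this as the well-known unramified Rankin--Selberg computation: factor the adelic integral using $\phi_c=\mathrm{char}_{\prod_{p\nmid N}G(\bfZ_p)}\times\phi_{c,N}$ and the tensor decomposition of the Whittaker functions, then evaluate the spherical torus integral at each $p\nmid N$ to get $L_p(s,\pi_f\times\pi_g)\zeta_p(2s)^{-1}$ (the paper invokes the classical identity for Hecke eigenvalues \`a la Shimura, noting the Satake parameters are stable under complex conjugation, rather than citing Bump/Casselman--Shalika, but the content is the same).
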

\begin{remark}
	Suppose that $f$ and $g$ are Hecke eigenforms. Let $\ell$ be a prime number, and let $V_f$ and $V_g$ be the $\ell$-adic Galois representations pure of weight one
	associated with $f$ and $g$, respectively.
	Let $
	L_p(s,V_f\otimes V_g)$ denote the $p$-Euler factor of the Hasse--Weil L-function of $V_f\otimes V_g$.
	Then, for $p\nmid N\ell$, we have
	\[
	L_p(s+1,V_f\otimes V_g)=L_p(s,\pi_f\otimes \pi_g).
	\]
\end{remark}
\subsection{Regulator formula}
The Hecke operators are normal with respect to the Petersson inner product (\cite[Theorem 3.41]{Shimura1}).
Therefore, the set
\[
\calB_h=\{\omega_f\ |\ f\in S_2(\Gamma_0(N)),\ f\text{ is a normalized cuspidal Hecke eigenform}\}
\]
is a basis of $\rmH^0(X_0(N),\Omega^1_{X_0(N)/\bfC})$.
We often identify $\calB_h$ with the set of normalized cuspidal Hecke eigenforms.
Note that $\omega_f$ is defined over the Hecke field of $f$,
which is a totally real number field. Therefore, in particular, $\calB_h$ is a basis of $\rmH^0(X_0(N),\Omega^1_{X_0(N)/\bfR})$.
Put 
\[
\calB_{ah}:=\{\overline{\omega_f}\ |\ \omega_f\in \calB_h\}.
\]
Then, $\calB_0:=\calB_h\coprod \calB_{ah}$ forms a basis of $\rmH^1_{\dR}(X/\bfR)$.
We define $\calB_e$ by
\[
\calB_e=\{\eta_c\ |\ c\in \Cusp\setminus\{\infty\}\}.
\]
According to Proposition~\ref{prop5-0}, the set $\calB:=\calB_0\coprod \calB_e$ is a basis of $\rmH^1_{\dR}(Y_0(N)/\bfR)$
satisfying the condition (\textbf{Orth}).

As in Subsection~\ref{regformula}, we define the weight zero Hodge structure $V$ by
\[
V(1)=\Ker(\rmH_1(Y_0(N))\ontomap \rmH_1(X_0(N))).
\]
For $\omega_f,\omega_g\in\calB_h$ and $c\in \Cusp\setminus\{\infty\}$, let $e_{f,g,c}$ denote the image of \[
([\omega_f]\otimes[\overline{\omega_g}]-[\overline{\omega_f}]\otimes[\omega_g])\otimes[\eta_c]^\vee
\]
in the extension group $\Ext^1_{\mathcal{MH}_\bfR^+}(\bfR(0),H^1(X_0(N))^{\otimes 2}(2)\otimes V)$ (see Example~\ref{ex4-1}).
\begin{theorem}\label{thm5-1}
	If the base point $b$ lies over an $\bfR$-rational point of $Y_0(N)$, then the following identity holds$:$
	\begin{multline*}
		\label{eq37}
		\reg_2([I_b/I_b^3])=-2\pi\sqrt{-1}\sum_{\substack{f,g \in \calB_h \\ f \neq g},\  c\in\Cusp\setminus\{\infty\}}\frac{|G(\widehat{\bfZ}):K_\rmf|I_N(1;f,g;c)}{\zeta^{(N)}(2)(\omega_f,\omega_f)(\omega_g,\omega_g)}L^{(N)}(1,\pi_f\times\pi_g)e_{f,g,c}\\
		-2\pi\sqrt{-1}\sum_{f\in \calB_h,\ c\in\Cusp\setminus\{\infty\}}\frac{|G(\widehat{\bfZ}):K_\rmf|}{\zeta^{(N)}(2)(\omega_f,\omega_f)^2}\mathrm{Res}_{s=1}(L^{(N)}(s,\pi_f\times\pi_f))I_N(1;f,f;c)e_{f,f,c}\\
		-\sum_{f\in \calB_h,\ c\in\Cusp\setminus\{\infty\}}\frac{\calE_c(b,1)}{(\omega_f,\omega_f)}e_{f,f,c}.
	\end{multline*}
	If $b$ is the standard tangential base point $\partial/\partial q$ $($\cite[Section 4.1]{Brown17}$)$, then the following identity holds$:$
	\begin{multline*}
		\reg_2([I_b/I_b^3])=-2\pi\sqrt{-1}\sum_{f,g\in \calB_h,\ f\neq g,\ c\in\Cusp\setminus\{\infty\}}\frac{|G(\widehat{\bfZ}):K_\rmf|I_N(1;f,g;c)}{\zeta^{(N)}(2)(\omega_f,\omega_f)(\omega_g,\omega_g)}L^{(N)}(1,\pi_f\times\pi_g)e_{f,g,c}\\
		-2\pi\sqrt{-1}\sum_{f\in \calB_h,\ c\in\Cusp\setminus\{\infty\}}\frac{|G(\widehat{\bfZ}):K_\rmf|}{\zeta^{(N)}(2)(\omega_f,\omega_f)^2}\mathrm{Res}_{s=1}(L^{(N)}(s,\pi_f\times\pi_f))I_N(1;f,f;c)e_{f,f,c}.
	\end{multline*}
\end{theorem}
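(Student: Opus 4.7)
The plan is to specialize Theorem~\ref{thm4-2} to $Y = Y_0(N)$ and translate the resulting inner products into L-function expressions via Rankin's method (Proposition~\ref{prop5-1} and Proposition~\ref{prop5-2}).

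First, I verify the hypotheses. The orthogonality of $\calB_h=\{\omega_f\}_{f}$, indexed by normalized cuspidal Hecke eigenforms of weight $2$ and level $\Gamma_0(N)$, with respect to the Petersson pairing follows from the normality of the Hecke operators (\cite[Theorem~3.41]{Shimura1}); this is exactly the condition required by Lemma~\ref{lem2-7}. Proposition~\ref{prop5-0} identifies $\calB_e = \{\eta_c : c \in \Cusp \setminus \{\infty\}\}$ with a basis of a lift of the Eisenstein quotient of $\rmH^1_{\dR}(Y_0(N)/\bfR)$, and condition~\textbf{(Tor)} for the pair $(X_0(N), D)$ holds by the Drinfeld--Manin theorem. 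For each $c$ I take $\calE_{\eta_c} := \calE_c(\tau, 1)$; the identity $d\calE_c(\tau, 1) = 2\,\mathrm{Re}(\eta_c) = \calR_{+}(\eta_c)$ from Subsection~\ref{eis} shows that $\varepsilon(\eta_c) = -$ and that (\ref{eq29}) is satisfied.

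Next, I substitute these data into Theorem~\ref{thm4-2} and evaluate the inner products via Rankin's identity at $s = 1$:
\[
(\calE_c(\tau,1)\omega_f, \omega_g) = -2\pi\sqrt{-1}\,|G(\widehat{\bfZ}): K_0(N)|\,I(1; f, g; c),
\]
using $\Gamma(2) = 1$ and $(4\pi)^{0} = 1$. Combined with the Euler decomposition $I(1;f,g;c) = \zeta^{(N)}(2)^{-1}\,L^{(N)}(1, \pi_f\times\pi_g)\,I_N(1;f,g;c)$ from Proposition~\ref{prop5-2}, the off-diagonal terms with $f\neq g$ --- where $L^{(N)}(s, \pi_f\times\pi_g)$ is holomorphic at $s=1$ --- immediately produce the first sum in the theorem.

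The main obstacle is the diagonal case $f = g$, since $L^{(N)}(s,\pi_f\times\pi_f)$ has a simple pole at $s = 1$. Here I pass to the Laurent expansion of both sides of Rankin's identity around $s = 1$. The pole of $L^{(N)}$ is matched by the pole structure of the Eisenstein family $\calE_c(\tau, s)$, which is built from $\phi_c = \mathrm{char}_\infty - \#(\cdot)^{-1}\mathrm{char}_{[g]}$ as a difference of two standard Eisenstein series whose residues at $s = 1$ are linked. Comparing Laurent coefficients of the two sides extracts precisely the contribution $\mathrm{Res}_{s=1}(L^{(N)}(s,\pi_f\times\pi_f))\,I_N(1;f,f;c)$ to the coefficient of $e_{f,f,c}$.

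Finally, the correction $-\calE_c(b,1)/(\omega_f,\omega_f)$ in the diagonal coefficient arises from the $-\calE_{\eta_c}(b)(\omega_f,\omega_f)/(\omega_f,\omega_f)^2$ piece of Theorem~\ref{thm4-2}. When $b = \partial/\partial q$ at the cusp $\infty$, the normalization (\ref{eq18}) absorbs the $\log|q|$ growth of $\calE_c(\tau,1)$ at $\infty$, so $\calE_c(b,1) = 0$ and this boundary term vanishes, producing the second formula. Reassembling the contributions in accordance with the sign $\varepsilon(\eta_c) = -$ and the convention for $e_{f,g,c}$ from Example~\ref{ex4-1} yields both displayed identities.
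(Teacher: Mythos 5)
Your proposal follows exactly the paper's route: the paper proves this theorem as a direct consequence of Theorem~\ref{thm4-2} together with Proposition~\ref{prop5-1} and Proposition~\ref{prop5-2}, which is precisely the specialization-plus-Rankin argument you carry out (taking $\calE_{\eta_c}=\calE_c(\tau,1)$ with $\varepsilon(\eta_c)=-$, using the Hecke-orthogonality of $\calB_h$, and evaluating at $s=1$). Your additional remarks on the diagonal case $f=g$ and on the tangential normalization only make explicit what the paper's one-line proof leaves implicit, so there is no substantive difference.
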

\begin{proof}
	This is a direct consequence of Theorem~\ref{thm4-2}, Proposition~\ref{prop5-1}, and Proposition~\ref{prop5-2}.
\end{proof}
Next, we compute $\reg_3([I_b/I_b^3])$.
For $f\in \calB_h$ and $ c,d\in \Cusp\setminus\{\ \infty\ \}$, define the element $e_{f,c,d}$ of $\Ext^1(\bfR(0),H^1(X_0(N))(2)\otimes V^{\otimes 2})$ to be the image of $([\omega_f]-[\overline{\omega_f}])\otimes [\eta_c]^\vee\otimes[\eta_d]^\vee$ (see Example~\ref{ex4-0}).
\begin{theorem}
	\label{thm5-2}The following identity holds$:$
	\begin{multline}
		\reg_3([I_b/I_b^3])=2\pi\sqrt{-1}\sum_{f\in\calB_h,  \substack{c,d\in\Cusp\setminus\{\infty\}\\
				c\neq d}}\left(\frac{|G(\widehat{\bfZ}):K_\rmf|(I_N(1;E_d,f;c)-I_N(1,E_c,f;d))}{\zeta^{(N)}(2)(\omega_f,\omega_f)}\right.\\
		\left.L^{(N)}\left(\frac{1}{2},\pi_f\right)L^{(N)}\left(\frac{3}{2},\pi_f\right)\right)e_{f,c,d}.
	\end{multline}
\end{theorem}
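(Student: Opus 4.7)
The plan is to specialize Theorem~\ref{thm4-3} to $Y_0(N)$ with the basis $\calB=\calB_h\coprod\calB_{ah}\coprod\calB_e$ introduced above, and then express the resulting pairings $(\calF_{\eta_c,\eta_d},\omega_f)$ as Rankin-Selberg integrals. First I would verify the orthogonality hypothesis (\textbf{Orth}): each $[\eta_c]$ is represented by an Eisenstein series and is therefore orthogonal to the cusp-form classes spanning $\rmH^1(X_0(N))$ under the pairing $(\alpha,\beta)=\frac{1}{2\pi\sqrt{-1}}\int_{X_0(N)(\bfC)}\alpha\wedge\overline\beta$. Granting this, Theorem~\ref{thm4-3} reduces the problem to evaluating $(\calF_{\eta_c,\eta_d},\omega_f)$ for all cusps $c\neq d$ and all $f\in\calB_h$.

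Next I choose $\calE_{\eta_c}:=\calE_c(\tau,1)$. Because $d\calE_c(\tau,1)=2\mathrm{Re}(\eta_c)=\eta_c+\overline{\eta_c}$, this satisfies the defining equation (\ref{eq29}) with $\varepsilon(\eta_c)=-$, so every Eisenstein class lies in $\calB_e^-$ and $\calR_{\varepsilon(\eta_c)}(\eta_c)=\eta_c-\overline{\eta_c}$. Definition (\ref{eq27}) then gives
\[
\calF_{\eta_c,\eta_d}=\calE_d(\tau,1)(\eta_c-\overline{\eta_c})-\calE_c(\tau,1)(\eta_d-\overline{\eta_d}).
\]
When wedged against the $(0,1)$-form $\overline{\omega_f}$, the antiholomorphic parts $\overline{\eta_c},\overline{\eta_d}$ contribute zero, so using $\eta_c=\omega_{E_c}$ I obtain
\[
(\calF_{\eta_c,\eta_d},\omega_f)=(\calE_d(\tau,1)\omega_{E_c},\omega_f)-(\calE_c(\tau,1)\omega_{E_d},\omega_f).
\]

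The final step is to apply Rankin's trick (Proposition~\ref{prop5-1}) at $s=1$, where the Archimedean prefactor $\Gamma(2)/(4\pi)^0=1$, and then invoke Proposition~\ref{prop5-2} to factor the resulting global zeta integral. Writing the Rankin-Selberg $L$-factor explicitly requires identifying the unramified Satake parameters of the weight-two Eisenstein series $E_c$ at $p\nmid N$ as $\{p^{1/2},p^{-1/2}\}$ in the automorphic normalization, so that (\ref{eq32}) becomes
\[
L_p(s,\pi_{E_c}\times\pi_f)=L_p(s-1/2,\pi_f)\,L_p(s+1/2,\pi_f).
\]
At $s=1$ this specializes to $L^{(N)}(1,\pi_{E_c}\times\pi_f)=L^{(N)}(1/2,\pi_f)L^{(N)}(3/2,\pi_f)$, independently of $c$. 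Substituting back into Theorem~\ref{thm4-3} and using the antisymmetry $\calF_{\eta_c,\eta_d}=-\calF_{\eta_d,\eta_c}$ yields the asserted formula. The main obstacle is not analytic but bookkeeping: carefully tracking signs, the factor $|G(\widehat\bfZ):K_\rmf|$, and the normalization of $e_{f,c,d}$ from Example~\ref{ex4-0} as the Rankin-Selberg decomposition is assembled.
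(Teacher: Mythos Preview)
Your proposal is correct and follows essentially the same route as the paper: apply Theorem~\ref{thm4-3} under (\textbf{Orth}), identify $\calE_{\eta_c}$ with $\calE_c(\tau,1)$ so that $(\calF_{\eta_c,\eta_d},\omega_f)$ becomes a difference of Rankin--Selberg integrals, then use Propositions~\ref{prop5-1} and~\ref{prop5-2} together with the Satake parameters $\{p^{1/2},p^{-1/2}\}$ of $\pi_{E_c,p}$ to obtain the $L^{(N)}(1/2,\pi_f)L^{(N)}(3/2,\pi_f)$ factor. The paper's proof is exactly this argument, only more tersely stated.
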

\begin{proof}
	Let $\calF_{\eta_c,\eta_d}$ be the smooth one-form on $Y_0(N)(\bfC)$ defined in (\ref{eq27}),
	namely,
	\[
	\calF_{\eta_c,\eta_d}:=\calE_c(\tau,1)\calR_-(\eta_d)-2\calE_d(\tau,1)\calR_-(\eta_c).
	\]
	Then, by Proposition~\ref{prop5-1} and Proposition~\ref{prop5-2}, we obtain the following identities:
	\begin{equation}
		\begin{split}
			(\calF_{c,d},\omega_f)&=(\calE_c\eta_d,\omega_f)-(\calE_d\eta_c,\omega_f)\\
			&=|G(\widehat{\bfZ}):K_{\rmf}|\zeta^{(N)}(2)^{-1}\left(L^{(N)}(1,\pi_{E_d}\times\pi_f)I_N(1;E_d,f;c)\right.\\
			&\hspace{0.5cm}\left.-L^{(N)}(1,\pi_{E_c}\times\pi_f)I_N(1;E_c,f;d)\right).
		\end{split}
	\end{equation}
	Since the Satake parameter of $\pi_{E_c,p}$ is $\{p^{1/2},p^{-1/2}\}$ for all $p\nmid N$ and $c\in \Cusp\setminus\{\infty\}$, we obtain the conclusion of the theorem by Theorem~\ref{thm4-3}.
\end{proof}

\subsection{On calculations of local zeta integrals}
The remaining task is to compute the local integrals $I_N(s;f,g;c)$.
We restrict ourselves to the special case where $f$ and $g$ are cuspidal newforms, and $c$ is a cusp near $0$ (see Definition~\ref{nearzero}). The same method applies to general cusps, but we do not pursue this here, as the argument is both lengthy and somewhat tangential to our main purpose.

For any prime number $p$, let $K_p$ denote the $p$-component of $K_{\rmf}$.
That is, we have
\[
K_p=\left\{\begin{bmatrix}
	a&b\\
	c&d
\end{bmatrix}\in G(\bfZ_p)\ \middle |\ c\in N\bfZ_p\right\}.
\]
The set $\Cusp$ decomposes as follows:
\[
\Cusp=\prod_{p}\Cusp_p,\quad \Cusp_p:=U(\bfZ_p)\backslash G(\bfZ_p)/K_p.
\]
For a cusp $c$, its $p$-component is denoted by $c_p$, which is an open compact subset of $G(\bfZ_p)$.

In the rest of this subsection, we fix normalized cuspidal newforms $f$ and $g$ of weight two and level $\Gamma_0(N)$.
For a prime number $p$, the additive character $\psi_p\colon \bfQ_p\to \bfC^\times$ is taken to be the same as in the previous subsection.
Since $f$ and $g$ are newforms, their Whittaker functions decomposes as pure tensors:
\[
W_f=\otimes'_vW_{f,v},\quad W_g=\otimes'_vW_{g,v},
\]
where $W_{f,p}$ and $W_{g,p}$ are normalized local newforms of $\pi_{f,p}$ and $\pi_{g,p}$,
respectively.
Let $\calK(\pi_{f,p},\psi_p)$ and $\calK(\pi_{g,p},\psi_p)$ denote the Kirillov model of $\pi_{f,p}$ and $\pi_{g,p}$ associated with $\psi_p$, respectively.
We define the elements $\xi_{f,p}\in \calK(\pi_{f,p},\psi_p)$ and $\xi_{g,p}\in \calK(\pi_{g,p},\psi_p)$ by
\[
\xi_{f,p}(x)=W_{f,p}\left(\begin{bmatrix}
	x&0\\
	0&1
\end{bmatrix}\right),\quad \xi_{g,p}(x)=W_{g,p}\left(\begin{bmatrix}
	x&0\\
	0&1
\end{bmatrix}\right).
\]

For a complex number $s$ and cusp $c$, we define
\begin{equation}\label{eq41}
	I(s,c_p):=\#(c_p/K_p)^{-1}\int_{\bfQ_p^\times \times c_p}(\pi_{f,p}(k)\xi_{f,p})(a)(\pi_{g,p}(k)\overline{\xi}_{g,p})(a)|a|_p^{s-1}d^\times ad k.
\end{equation}
When $c=\infty=K_{\rmf}$, we have the identity
\begin{equation}\label{eq38}
	I(s,\infty_p)=\mathrm{vol}(K_p)\int_{\bfQ_p^\times}\xi_{f,p}(a)\overline{\xi}_{g,p}(a)|a|_p^{s-1}d^\times a
\end{equation}
because $\xi_{f,p}$ and $\xi_{g,p}$ are invariant under the action of $K_p$.
\begin{remark}
	Our zeta integrals (\ref{eq41}), (\ref{eq37}) differ from those of Gelbart--Jacquet~\cite[(1.1.3)]{GJ}.
	Consequently, it is worth noting that the associated L-factors also differ from those in~\cite{GJ}.
	Compare~\cite[Proposition (1,4)]{GJ} with Proposition~\ref{prop6-1} below.\label{rem6.0}
\end{remark}
\begin{lemma}
	For each cusp $c$, the following identity holds$:$
	\[
	I_N(s;f,g;c)=\prod_{p|N}I(s,\infty_p)-\prod_{p|N}I(s,c_p).
	\]
\end{lemma}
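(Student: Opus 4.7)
The plan is to unpack $I_N(s;f,g;c)$ as an adelic integral at primes dividing $N$ and match it piece by piece to the local factors $I(s,\infty_p)$ and $I(s,c_p)$. The starting point is the identity
\[
\phi_{c,N}=\mathrm{char}_{\infty_N}-\#(c/K)^{-1}\mathrm{char}_{c_N}
\]
for the $N$-part of $\phi_c$, where $\infty_N=\prod_{p\mid N}\infty_p$ and $c_N=\prod_{p\mid N}c_p$. Because $\#(c/K)=\prod_{p\mid N}\#(c_p/K_p)$ (the cusp components at $p\nmid N$ being trivial) and because $f,g$ are newforms, their Whittaker functions decompose as pure tensors $W_{f,N}=\otimes_{p\mid N}W_{f,p}$, $W_{g,N}=\otimes_{p\mid N}W_{g,p}$, and the entire integrand of $I_N(s;f,g;c)$ factorizes over $p\mid N$. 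So splitting the integral along the two summands of $\phi_{c,N}$ reduces the claim to analyzing two local integrals at each $p\mid N$.

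For the $\infty$-piece, I would use the fact that $\infty$ is represented by the identity and $U(\bfZ_p)\subset K_p$, so as a subset of $G(\bfZ_p)$ we have $\infty_p=K_p$ and $\#(\infty_p/K_p)=1$. Since $W_{f,p}$ and $W_{g,p}$ are right $K_p$-invariant, for $k\in K_p$
\[
W_{f,p}\begin{pmatrix}a&0\\0&1\end{pmatrix}k=\xi_{f,p}(a),\qquad W_{g,p}\begin{pmatrix}a&0\\0&1\end{pmatrix}k=\xi_{g,p}(a),
\]
and the $k$-integral collapses to $\mathrm{vol}(K_p)$. This is exactly the identity (\ref{eq38}), so the contribution at each $p\mid N$ is $I(s,\infty_p)$, yielding $\prod_{p\mid N}I(s,\infty_p)$.

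For the $c$-piece, I would translate to the Kirillov model via $W_{f,p}\bigl(\begin{smallmatrix}a&0\\0&1\end{smallmatrix}\bigr)k=(\pi_{f,p}(k)\xi_{f,p})(a)$ and similarly for $g$; this rewrites each $p$-local integrand in precisely the form appearing inside $I(s,c_p)$, but without the normalizing factor $\#(c_p/K_p)^{-1}$. The missing normalizations assemble into the global factor $\prod_{p\mid N}\#(c_p/K_p)=\#(c/K)$, which cancels exactly against the $\#(c/K)^{-1}$ multiplying $\mathrm{char}_{c_N}$ in $\phi_{c,N}$. Subtracting the two pieces gives the stated identity.

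There is no real obstacle: both sides are built from products of local integrals, and the verification is purely formal once one (i) recognizes $\infty_p=K_p$ and uses $K_p$-invariance, and (ii) passes to the Kirillov model for the $c$-piece. The only point one must handle carefully is to keep track of the normalizing constants $\#(c_p/K_p)$ so that the global and local normalizations match.
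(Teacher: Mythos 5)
Your proposal is correct and follows the same route as the paper, which simply notes the lemma is a direct consequence of the definition of $\phi_c$; you have merely written out the factorization over $p\mid N$ and the bookkeeping of the normalizing constants $\#(c_p/K_p)$ that the paper leaves implicit.
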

\begin{proof}This is a direct consequence of the definition of $\phi_c$.
\end{proof}
From now on,
we fix a prime number $p$ dividing $N$, and write $\pi$ and $\pi'$ as $\pi_{f,p}$ and $\pi_{g,p}$, respectively.
Similarly, we write $\xi$ and $\xi'$ for $\xi_{f,p}$ and $\xi_{g,p}$, respectively.
Let $v_p\colon \bfQ_p^\times \to \bfZ$ be the additive valuation satisfying $v_p(p)=1$.
Note that the central characters of $\pi$ and $\pi'$ are trivial. Therefore, when $v_p(N)=1$, $\pi$ and $\pi'$ are special representations $\chi\mathrm{St}$ and $\chi'\mathrm{St}$,
respectively (\cite[Section 1.2]{Schmidt}). Here, $\chi$ and $\chi'$ are either the trivial character or the unramified quadratic character of $\bfQ_p^\times$.
The computation of $I(s,\infty_p)$ is straightforward.
\begin{proposition}\label{prop6-1}
	If $v_p(N)>1$, then the following identity holds:
	\[
	I(s,\infty_p)=\mathrm{vol}(K_p).
	\]
	If $v_p(N)=1$ and $\pi=\chi\mathrm{St}$,  $\pi'=\chi'\mathrm{St}$,
	then the following identity holds:
	\[
	I(s,\infty_p)=\mathrm{vol}(K_p)\frac{1}{1-\chi(p)\chi'(p)p^{-s-1}}.
	\]
\end{proposition}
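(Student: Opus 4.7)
The plan is to start from equation (\ref{eq38}) and reduce everything to the explicit shape of the normalized local newform in the Kirillov model. Writing
\[
I(s,\infty_p) = \mathrm{vol}(K_p) \int_{\bfQ_p^\times} \xi(a) \overline{\xi'(a)} |a|_p^{s-1} d^\times a,
\]
and using that both $\xi$ and $\xi'$ are invariant under $\bfZ_p^\times$, I would break the inner integral into a sum over the cosets $p^m \bfZ_p^\times$ for $m \in \bfZ$. With the standard normalization $\mathrm{vol}(\bfZ_p^\times) = 1$, the computation thus reduces to evaluating $\xi$ and $\xi'$ on the powers of $p$.

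For the first case $v_p(N) > 1$, the key input is the well-known description of the local newform in the Kirillov model: for any irreducible admissible generic representation $\pi_p$ of $\GL_2(\bfQ_p)$ with conductor exponent at least two, the normalized newform satisfies $\xi_{\pi_p} = \mathbf{1}_{\bfZ_p^\times}$ (this goes back to Casselman and is explicitly recorded in Schmidt's treatment of local newforms for $\GL_2$). Applying this to both $\pi$ and $\pi'$, the integrand $\xi(a)\overline{\xi'(a)}|a|_p^{s-1}$ is supported on $\bfZ_p^\times$ and equals $1$ there, so the integral collapses to $\mathrm{vol}(\bfZ_p^\times) = 1$, giving $I(s,\infty_p) = \mathrm{vol}(K_p)$.

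For the second case $v_p(N) = 1$, we have $\pi = \chi\mathrm{St}$ and $\pi' = \chi'\mathrm{St}$ with $\chi$, $\chi'$ unramified (these are the only possibilities compatible with trivial central character and conductor exactly $p$). The explicit formula for the newform in the Kirillov model of a weight-two special representation—coming from matching the adelic Whittaker function of the weight-two newform with its $q$-expansion at the bad prime—gives $\xi(p^m) = \chi(p)^m p^{-m}$ for $m \geq 0$ and $\xi(p^m) = 0$ for $m < 0$ (and similarly for $\xi'$, with $\chi'$ and $\chi'(p)$ real since $\chi'$ is the trivial or unramified quadratic character). Substituting and summing the geometric series yields
\[
\int_{\bfQ_p^\times} \xi(a) \overline{\xi'(a)} |a|_p^{s-1} d^\times a = \sum_{m=0}^\infty \chi(p)^m \chi'(p)^m p^{-2m} p^{-m(s-1)} = \frac{1}{1 - \chi(p)\chi'(p) p^{-s-1}},
\]
which is exactly the claimed formula after multiplying by $\mathrm{vol}(K_p)$.

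The main obstacle in carrying this out is pinning down the correct normalization of the Kirillov newform in the Steinberg case, since the exponent $-s-1$ in the denominator (as opposed to $-s-\tfrac12$ or $-s$) is sensitive to the weight-two shift and to the conventions used in passing between $f$ and $\varphi_f$. Once this normalization is fixed by matching coefficients with the classical $q$-expansion (using $a_{p^m} = \chi(p)^m$ together with the weight-two factor $p^{-m}$ in the relation $W_{f,p}(\mathrm{diag}(p^m,1)) = a_{p^m} p^{-m}$), the rest of the argument is purely a geometric-series calculation.
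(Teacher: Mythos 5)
Your proposal is correct and follows essentially the same route as the paper: start from (\ref{eq38}), invoke the explicit shape of the normalized local newform in the Kirillov model (characteristic function of $\bfZ_p^\times$ when the conductor exponent is at least two, and $\xi(x)=|x|_p\chi(x)\mathrm{char}_{\bfZ_p}(x)$ in the Steinberg case, both from Schmidt's table), and finish with the geometric series. The only cosmetic difference is that you re-derive the Steinberg-case formula by matching with the classical weight-two $q$-expansion, whereas the paper simply cites Schmidt's summary.
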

\begin{proof}
	By definition, $\xi$ and $\xi'$ are normalized local newforms.
	Therefore, by~\cite[Summary of Section 2]{Schmidt}, both $\xi$ and $\xi'$ are the characteristic function of $\bfZ_p^\times$ when $v_p(N)>1$.
	Thus, the conclusion follows directly from (\ref{eq38}).
	
	When $v_p(N)=1$, we have
	\[
	\xi(x)=|x|_p\chi(x)\mathrm{char}_{\bfZ_p}(x),\quad \xi'(x)=|x|_p\chi'(x)\mathrm{char}_{\bfZ_p}(x)
	\]
	from the same table of local newforms. Therefore, the conclusion also follows by direct computation.
\end{proof}
Define the matrix $w_p\in G(\bfZ_p)$ by
\[
w_p=\begin{bmatrix}
	0&1\\
	-1&0
\end{bmatrix}
\]
and define $0_p$ to be the double coset $U({\bfZ_p})w_pK_p$.
In this paper, we compute only the local integral $I(s,0_p)$.
\begin{lemma}For $a\in \bfQ_p$, define $\tau_a$ to be $\begin{bmatrix}
		1&a\\
		0&1
	\end{bmatrix}$.
	Put $r:=v_p(N)$.
	Then we have 
	\[
	0_p=\coprod_{a=0}^{p^r-1}\tau_aw_pK_p.
	\]\label{lem5-1}
\end{lemma}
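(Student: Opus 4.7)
The plan is to unwind the definitions and reduce everything to a single $2\times 2$ matrix computation.

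First, I would recall that $U(\bfZ_p) = \{\tau_b \mid b \in \bfZ_p\}$, so by the very definition of $0_p$ we have
\[
0_p = U(\bfZ_p) w_p K_p = \bigcup_{b \in \bfZ_p} \tau_b w_p K_p.
\]
Thus the proof reduces to identifying precisely when two cosets $\tau_a w_p K_p$ and $\tau_b w_p K_p$ coincide. Clearly $\tau_a w_p K_p = \tau_b w_p K_p$ if and only if $\tau_{b-a} \in w_p K_p w_p^{-1}$, equivalently $w_p^{-1}\tau_{b-a} w_p \in K_p$.

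Next I would carry out the direct matrix computation
\[
w_p^{-1}\tau_c w_p = \begin{bmatrix} 0 & -1 \\ 1 & 0 \end{bmatrix}\begin{bmatrix} 1 & c \\ 0 & 1 \end{bmatrix}\begin{bmatrix} 0 & 1 \\ -1 & 0 \end{bmatrix} = \begin{bmatrix} 1 & 0 \\ -c & 1 \end{bmatrix}.
\]
By definition of $K_p$ (its $(2,1)$-entry must lie in $N\bfZ_p = p^r\bfZ_p$ since we are at the prime $p$), this matrix lies in $K_p$ if and only if $c \in p^r\bfZ_p$. In other words, $\tau_a w_p K_p = \tau_b w_p K_p$ precisely when $a \equiv b \pmod{p^r}$.

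Finally, I would conclude that the cosets $\tau_a w_p K_p$ are indexed by $\bfZ_p/p^r\bfZ_p$, for which $\{0,1,\dots,p^r-1\}$ is a complete set of representatives; this yields the disjoint decomposition stated in the lemma. There is no genuine obstacle here: the statement is a standard Bruhat-style coset enumeration, and the only thing to verify carefully is the conjugation formula $w_p^{-1}\tau_c w_p = \begin{bmatrix}1 & 0\\-c & 1\end{bmatrix}$, which makes the congruence condition on the lower-left entry of $K_p$ transparent.
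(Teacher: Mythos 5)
Your proof is correct and follows essentially the same route as the paper: the key step in both is the conjugation computation $w_p^{-1}\tau_c w_p=\begin{bmatrix}1&0\\-c&1\end{bmatrix}$, which shows $\tau_a w_pK_p=\tau_b w_pK_p$ if and only if $a\equiv b\pmod{p^r}$, and the covering statement follows since $U(\bfZ_p)=\{\tau_b\mid b\in\bfZ_p\}$. Your write-up merely spells out the covering step a bit more explicitly than the paper does.
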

\begin{proof}
	Since $w_p^{-1}\tau_aw_p=\begin{bmatrix}
		1&0\\
		-a&1
	\end{bmatrix}$, we have $\tau_a w_p K_p=\tau_b w_pK_p$ if and only if $a-b\in p^r\bfZ_p$.
	Therefore, the cosets $\{\tau_aw_pK_p\}_{a=0}^{p^r-1}$ cover $0_p$, and they are pairwise disjoint.
\end{proof}
The epsilon factors of $\pi$ and $\pi'$, associated with an additive character $\psi$, are denoted by $\epsilon(s,\pi,\psi)$ and $\epsilon(s,\pi',\psi)$,
respectively. According to~\cite[(10)]{Schmidt}, their values at $s=1/2$ do not depend on the choice of $\psi$. Therefore, we write $\epsilon(1/2,\pi)$ and $\epsilon(1/2,\pi')$ for their values at $s=1/2$, respectively.
\begin{proposition}
	Let $r:=v_p(N)$. Then, the following identity holds$:$
	\[
	I(s,0_p)=\epsilon\left(\frac{1}{2},\pi\right)\epsilon\left(\frac{1}{2},\pi'\right)p^{r(s-1)}I(s,\infty_p).
	\]
\end{proposition}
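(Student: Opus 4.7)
The plan is to combine the coset decomposition of Lemma~\ref{lem5-1}, the $K_p$-invariance of the local newvectors, and a Kirillov-model computation with the local Atkin--Lehner theorem. By Lemma~\ref{lem5-1}, $\#(0_p/K_p) = p^r$ and every $k \in 0_p$ is of the form $k = \tau_\alpha w_p k'$ with $\alpha \in \{0,1,\dots,p^r-1\}$ and $k' \in K_p$. Since $\xi$ and $\xi'$ are newvectors, they are $K_p$-fixed, so $\pi(k)\xi = \pi(\tau_\alpha w_p)\xi$ and similarly for $\pi'$; the $dk$-integral defining $I(s,0_p)$ therefore collapses into a sum over $\alpha$ with overall prefactor $p^{-r}\mathrm{vol}(K_p)$.

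The first key observation is a $\psi$-cancellation: in the Kirillov model, $\tau_\alpha$ acts on $\xi$ by multiplication by $\psi_p(\alpha x)$ and symmetrically on $\overline{\xi}'$ by $\overline{\psi_p(\alpha x)}$, so
\[
(\pi(\tau_\alpha w_p)\xi)(x)\cdot(\pi'(\tau_\alpha w_p)\overline{\xi}')(x) = (\pi(w_p)\xi)(x)\cdot(\pi'(w_p)\overline{\xi}')(x).
\]
Hence the summand is independent of $\alpha$, the sum contributes $p^r$ cancelling the $p^{-r}$, and the problem reduces to a single integral of $(\pi(w_p)\xi)(x)(\pi'(w_p)\overline{\xi}')(x)|x|_p^{s-1}$ over $\bfQ_p^\times$ with prefactor $\mathrm{vol}(K_p)$.

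The second step is to compute $\pi(w_p)\xi$ via the local Atkin--Lehner element $\eta_r := \begin{bmatrix} 0 & 1 \\ -p^r & 0 \end{bmatrix}$. A short matrix calculation yields $w_p = (p^{-r}I)\cdot\mathrm{diag}(p^r,1)\cdot\eta_r$; triviality of the central character of $\pi$ kills the scalar factor, giving $\pi(w_p)\xi = \pi(\mathrm{diag}(p^r,1))\pi(\eta_r)\xi$. By the local Atkin--Lehner theorem (cf.\ \cite[Section 1.2]{Schmidt}), the normalized newvector $\xi$ is an eigenvector of $\pi(\eta_r)$ with eigenvalue $\epsilon(1/2,\pi) \in \{\pm 1\}$; combined with the Kirillov formula $(\pi(\mathrm{diag}(p^r,1))\xi)(x) = \xi(p^r x)$, this gives $(\pi(w_p)\xi)(x) = \epsilon(1/2,\pi)\,\xi(p^r x)$, and analogously $(\pi'(w_p)\overline{\xi}')(x) = \epsilon(1/2,\pi')\,\overline{\xi'(p^r x)}$.

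Substituting and changing variable $y = p^r x$ produces the Jacobian $|p^{-r}|_p^{s-1} = p^{r(s-1)}$ and identifies the remaining integral with $I(s,\infty_p)/\mathrm{vol}(K_p)$, yielding the claimed formula. The principal difficulty is the Atkin--Lehner eigenvalue identity $\pi(\eta_r)\xi = \epsilon(1/2,\pi)\xi$; this is where all the delicate $p$-adic analysis of newforms is concentrated, but it is classical for representations with trivial central character and is exactly what the reference \cite{Schmidt} supplies. Once this input is granted, the rest of the argument is essentially a change-of-variable computation.
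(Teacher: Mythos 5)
Your proof is correct, and its skeleton coincides with the paper's: the coset decomposition of Lemma~\ref{lem5-1}, the collapse of the $dk$-integral by right $K_p$-invariance of the newvectors, the cancellation $\psi_p(\alpha x)\overline{\psi_p(\alpha x)}=1$ in the Kirillov model (so the summand is independent of $\alpha$ and the factor $p^{-r}$ cancels), and finally a computation of $\pi(w_p)\xi$. The difference lies in how that last ingredient is obtained. The paper splits into the cases $v_p(N)\geq 2$ and $v_p(N)=1$ and quotes, from the proof of Theorem 3.2 in \cite{Schmidt}, the explicit formulas $\pi(w_p)\xi=\epsilon(\tfrac12,\pi)\,\mathrm{char}_{p^{-r}\bfZ_p^\times}$ (resp. $(\pi(w_p)\xi)(x)=\epsilon(\tfrac12,\pi)\xi(px)$ for $\chi\mathrm{St}$), and then evaluates the resulting integral case by case, using the explicit shape of the newvectors already recorded in Proposition~\ref{prop6-1}. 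You instead factor $w_p=p^{-r}\,\mathrm{diag}(p^r,1)\,\eta_r$, use triviality of the central character to discard the scalar, and invoke the local Atkin--Lehner eigenvalue statement $\pi(\eta_r)\xi=\epsilon(\tfrac12,\pi)\xi$ to get the uniform identity $(\pi(w_p)\xi)(x)=\epsilon(\tfrac12,\pi)\xi(p^rx)$; a single change of variables then produces $p^{r(s-1)}$ and identifies the remaining integral with $I(s,\infty_p)/\mathrm{vol}(K_p)$ without ever needing the newvectors explicitly. This buys a case-free argument and a direct comparison with $I(s,\infty_p)$ (the paper's $r\geq 2$ case instead computes both quantities separately), at the cost of citing the Atkin--Lehner eigenvalue theorem, which is essentially the same input that Schmidt's proof supplies; note that your use of $\epsilon(\tfrac12,\pi)$ without reference to $\psi$ is legitimate precisely because the central characters are trivial, and that the factorization uses that $f$ and $g$ are newforms of level exactly $N$, so $p^r$ is the conductor of both $\pi$ and $\pi'$ at $p$ and $\eta_r$ is the correct Atkin--Lehner element for both.
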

\begin{proof}Suppose that $r>1$. According to~\cite[Proof of Theorem 3.2, Case $n\geq 2$]{Schmidt}, we have
	\[
	\pi(w_p)\xi(x)=\epsilon\left(\frac{1}{2},\pi\right)\mathrm{char}_{p^{-r}\bfZ_p^\times}(x),\quad \pi(w_p)\xi'(x)=\epsilon\left(\frac{1}{2},\pi'\right)\mathrm{char}_{p^{-r}\bfZ_p^\times}(x).
	\]
	Therefore, by Lemma~\ref{lem5-1}, the following identities hold:
	\begin{equation*}
		\begin{split}
			I(s,0_p)&=\frac{\mathrm{vol}(K_p)}{p^r}\sum_{a=0}^{p^r-1}\int_{\bfQ_p^\times }(\pi(\tau_a w_p)\xi)(x)\overline{(\pi'(\tau_a w_p)\xi')(x)}|x|^{s-1}d^\times x\\
			&=\mathrm{vol}(K_p)\int_{\bfQ_p^\times }(\pi( w_p)\xi)(x)\overline{(\pi'( w_p)\xi')(x)}|x|^{s-1}d^\times x\\
			&=\mathrm{vol}(K_p)\epsilon\left(\frac{1}{2},\pi\right)\epsilon\left(\frac{1}{2},\pi'\right)p^{r(s-1)}.
		\end{split}
	\end{equation*}
	
	If $r=1$ and $\pi=\chi\mathrm{St},\ \pi'=\chi'\mathrm{St}$, then the identities
	\[
	\pi(w_p)\xi(x)=\epsilon\left(\frac{1}{2},\pi\right)\xi(px),\quad \pi(w_p)\xi'(x)=\epsilon\left(\frac{1}{2},\pi'\right)\xi'(px)
	\]
	hold by ~\cite[Proof of Theorem 3.2, Case $n=1$]{Schmidt}. Therefore, we have
	\begin{equation*}
		\begin{split}
			I(s,0_p)&=\mathrm{vol}(K_p)\int_{\bfQ_p^\times }(\pi( w_p)\xi)(x)\overline{(\pi'( w_p)\xi')(x)}|x|^{s-1}d^\times x\\
			&=\mathrm{vol}(K_p)\epsilon\left(\frac{1}{2},\pi\right)\epsilon\left(\frac{1}{2},\pi'\right)\int_{\bfQ_p^\times }\xi(px)\overline{\xi'(px)}|x|^{s-1}d^\times x\\
			&=\mathrm{vol}(K_p)\epsilon\left(\frac{1}{2},\pi\right)\epsilon\left(\frac{1}{2},\pi'\right)p^{s-1}\int_{\bfQ_p^\times }\xi(x)\overline{\xi'(x)}|x|^{s-1}d^\times x.
		\end{split}
	\end{equation*}
	Thus, we obtain the conclusion of the proposition.
\end{proof}
\begin{definition}\label{nearzero}
	Let $c$ be a cusp different from $\infty$. We say that $c$ is \emph{near $0$} if $c_p$ is $0_p$ or $\infty_p$
	for all prime numbers $p$.
\end{definition}
For simplicity of notation, for $\pi=\pi_{f,p}$ and $\pi'=\pi_{g,p}$, we set
\begin{equation}\label{eq39}
	L_p(s,\pi\times \pi'):=I(s,\infty_p)\mathrm{vol}(K_p)^{-1}
\end{equation}
and define
\begin{equation}
	\label{eq40}
	L(s,\pi_f\times \pi_g):=\prod_{p\text{ prime numbers}}L_p(s,\pi_{f,p}\times \pi_{g,p}).
\end{equation}
\begin{theorem}\label{thm5-3}
	Let $c$ be a cusp near $0$. Then we have
	\[
	I(s;f,g;c)=L(s,\pi_f\times \pi_g)|G(\widehat{\bfZ}):K_{\rmf}|^{-1}\left(1-\prod_{p|N,\ c_p=0_p}\epsilon\left(\frac{1}{2},\pi_{f,p}\right)\epsilon\left(\frac{1}{2},\pi'_{g,p}\right)|N|_p^{1-s}\right)
	\]
\end{theorem}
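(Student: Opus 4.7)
The proof is a straightforward chain of substitutions assembling the local computation of $I(s,0_p)$, the product-difference expression for $I_N(s;f,g;c)$ stated just before Proposition~\ref{prop6-1}, the Rankin unfolding of Proposition~\ref{prop5-2}, and the definition (\ref{eq39}) of the local L-factor.

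First I would apply the preceding lemma
\[
I_N(s;f,g;c) = \prod_{p\mid N} I(s,\infty_p) - \prod_{p\mid N} I(s,c_p).
\]
Since $c$ is near $0$, each $c_p \in \{\infty_p, 0_p\}$. Setting $S := \{p\mid N : c_p = 0_p\}$ and noting that $p^{r(s-1)} = |N|_p^{1-s}$ for $r = v_p(N)$, the proposition computing $I(s,0_p)$ yields
\[
\prod_{p\mid N} I(s,c_p) = \Bigl(\prod_{p\in S} \epsilon\bigl(\tfrac{1}{2}, \pi_{f,p}\bigr) \epsilon\bigl(\tfrac{1}{2}, \pi_{g,p}\bigr) |N|_p^{1-s}\Bigr) \prod_{p\mid N} I(s,\infty_p),
\]
so that
\[
I_N(s;f,g;c) = \prod_{p\mid N} I(s,\infty_p) \cdot \Bigl(1 - \prod_{p\in S} \epsilon\bigl(\tfrac{1}{2}, \pi_{f,p}\bigr) \epsilon\bigl(\tfrac{1}{2}, \pi_{g,p}\bigr) |N|_p^{1-s}\Bigr).
\]
This already produces the $(1 - \cdots)$ factor appearing in the theorem.

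Next, substituting into Proposition~\ref{prop5-2} and inverting (\ref{eq39}) to write $I(s,\infty_p) = \mathrm{vol}(K_p) L_p(s, \pi_{f,p} \times \pi_{g,p})$ at each $p \mid N$ gives
\[
I(s;f,g;c) = \zeta^{(N)}(2s)^{-1} L^{(N)}(s, \pi_f \times \pi_g) \prod_{p\mid N} L_p(s, \pi_{f,p} \times \pi_{g,p}) \cdot \prod_{p \mid N} \mathrm{vol}(K_p) \cdot (1 - \cdots).
\]
The product of $L$-factors collapses to the global $L(s, \pi_f \times \pi_g)$ of (\ref{eq40}), and under the normalization $\mathrm{vol}(G(\widehat{\bfZ})) = 1$ with $K_\rmf$ agreeing with $G(\bfZ_p)$ at all $p \nmid N$, the identity $\prod_{p\mid N} \mathrm{vol}(K_p) = [G(\widehat{\bfZ}) : K_\rmf]^{-1}$ is immediate.

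The bookkeeping point requiring care is the $\zeta^{(N)}(2s)^{-1}$ factor inherited from Proposition~\ref{prop5-2}: one must check that the paper's global $L$-function (\ref{eq40}) absorbs this correction, i.e.\ that the paper's local $L_p$ at unramified primes differs from the standard local Rankin--Selberg Euler factor (\ref{eq32}) by a $\zeta_p(2s)^{-1}$. This is precisely what Remark~\ref{rem6.0} is signalling, and is the main point where the normalization conventions must be reconciled. Once this is settled, the computation is mechanical.
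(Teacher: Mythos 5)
Your assembly is exactly the chain the paper has in mind (its own proof is a one-line appeal to the preceding computations): the lemma $I_N(s;f,g;c)=\prod_{p\mid N}I(s,\infty_p)-\prod_{p\mid N}I(s,c_p)$, the evaluation of $I(s,0_p)$ with $p^{r(s-1)}=|N|_p^{1-s}$, Proposition~\ref{prop5-2}, the definition (\ref{eq39}), and the volume identity $\prod_{p\mid N}\mathrm{vol}(K_p)=|G(\widehat{\bfZ}):K_{\rmf}|^{-1}$. Up to and including the extraction of the factor $\bigl(1-\prod_{p\mid N,\,c_p=0_p}\epsilon(\tfrac12,\pi_{f,p})\epsilon(\tfrac12,\pi_{g,p})|N|_p^{1-s}\bigr)$, your computation is correct and is the only sensible route.

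The gap is in your last paragraph, and Remark~\ref{rem6.0} does not close it: that remark concerns the discrepancy at the \emph{ramified} places between (\ref{eq39}) and the Gelbart--Jacquet factors, not an absorption of $\zeta_p(2s)^{-1}$ at unramified places. Under the paper's explicit definition (\ref{eq32}) of $L_p$ for $p\nmid N$ (the standard Euler factor, with no $\zeta_p(2s)^{-1}$), what your chain actually produces is
\[
I(s;f,g;c)=\zeta^{(N)}(2s)^{-1}\,L(s,\pi_f\times\pi_g)\,|G(\widehat{\bfZ}):K_{\rmf}|^{-1}\Bigl(1-\prod_{p\mid N,\ c_p=0_p}\epsilon\bigl(\tfrac12,\pi_{f,p}\bigr)\epsilon\bigl(\tfrac12,\pi_{g,p}\bigr)|N|_p^{1-s}\Bigr),
\]
so the statement as printed holds only if one reinterprets (\ref{eq40}) so that \emph{every} local factor, including the unramified ones, is the normalized local integral $I(s,\infty_p)\mathrm{vol}(K_p)^{-1}$, which for $p\nmid N$ equals $\zeta_p(2s)^{-1}$ times (\ref{eq32}); but then your claim that ``the product of $L$-factors collapses to the global $L$'' would need restating, since $L^{(N)}(s)\prod_{p\mid N}L_p(s)$ would no longer be $L(s,\pi_f\times\pi_g)$. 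Note also that Theorem~\ref{thm1} in the introduction keeps $\zeta^{(p)}(2)$ in the denominator, which is consistent with the display above and with (\ref{eq32}), not with the absorption you posit. So you should either carry $\zeta^{(N)}(2s)^{-1}$ explicitly or state which convention for the unramified factors you adopt, rather than deferring the reconciliation to Remark~\ref{rem6.0}; everything else in the argument is fine.
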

\begin{proof}
	This is a direct consequence of Proposition~\ref{prop5-1} and Proposition~\ref{prop5-2}.
\end{proof}
When $N$ is square-free, every cusp is near to $0$. Therefore,
we can compute the ``newform part'' of $\reg_2([I_b/I_b^3])$ explicitly by the theorem above.
The explicit computation in the case $N=p$ is exactly Theorem~\ref{thm1} in the Introduction.
Note that in this case, the space of Eisenstein series is one-dimensional, and $\reg_3([I_b/I_b^3])$ consequently vanishes.
\appendix
\section{Universal objects of unipotent Tannakian categories}\label{unittan}
In this appendix, we summarize basic properties of unipotent Tannakian categories.

Let $\calK$ be a field of characteristic zero,
and let $\calC$ be a $\calK$-linear neutral Tannakian category (~\cite{DeM}).
The symbol $\boldsymbol 1$ denotes the unit object of $\calC$.
We say that an object $V$ of $\calC$ is \emph{unipotent} if it is isomorphic to an iterated extension of $\boldsymbol 1$.
That is, $V$ is unipotent if there exists a sequence
\[
0=V_0\subset V_1\subset\cdots \subset V_n=V
\]
of objects of $\calC$ such that $V_{i+1}/V_i$ is isomorphic to a finite direct sum of $\boldsymbol 1$.
The minimum length of such a sequence is called the \emph{unipotency of $V$}.
When every object of $\calC$ is unipotent, we say that $\calC$ is a \emph{unipotent Tannakian category}.
For a unipotent Tannakian category $\calC$ and a positive integer $N$,
$\calC^{\leq N}$ is defined to be the strictly full subcategory of $\calC$
consisting of all objects whose unipotency are less than or equal to $N$.

From now on, suppose that $\calC$ is a unipotent Tannakian category.
\begin{proposition}
	Suppose that the $\calK$-vector space $\Ext^1_{\calC}(\boldsymbol 1,\boldsymbol 1)$ is finite-dimensional. Put $V_1:=\boldsymbol 1$.
	Then there exists an inductive system $\{V_N\}_{N\geq 1}$ in $\calC$, equipped with exact sequences
	\begin{equation}
		\label{eqa1}
		0\to V_{N-1}\to V_N\to \Ext^1_{\calC}(\boldsymbol 1,V_{N-1})\otimes \boldsymbol 1\to 0
	\end{equation}
	for all $N\geq 2$,
	such that the connecting homomorphism
	\[
	\Ext^1_{\calC}(\boldsymbol 1,V_{N-1})\to \Ext^1_{\calC}(\boldsymbol 1,V_{N-1})
	\]
	induced by $\Hom(\boldsymbol 1,-)$ is the identity map.\label{propa1}
\end{proposition}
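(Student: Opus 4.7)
The plan is to construct $V_N$ by induction on $N$. The base case $V_1 = \boldsymbol 1$ is given, so assume we have constructed $V_1,\dots,V_{N-1}$ satisfying the claimed exact sequences and connecting-map identity. First I would verify that $W_{N-1} := \Ext^1_{\calC}(\boldsymbol 1,V_{N-1})$ is finite-dimensional over $\calK$. By the inductive exact sequence $0\to V_{N-2}\to V_{N-1}\to W_{N-2}\otimes\boldsymbol 1\to 0$, the long exact sequence applied to $\Hom(\boldsymbol 1,-)$ yields a piece
\[
\Hom(\boldsymbol 1,W_{N-2}\otimes\boldsymbol 1)\xrightarrow{\delta}\Ext^1_{\calC}(\boldsymbol 1,V_{N-2})\to \Ext^1_{\calC}(\boldsymbol 1,V_{N-1})\to \Ext^1_{\calC}(\boldsymbol 1,W_{N-2}\otimes\boldsymbol 1).
\]
By the inductive hypothesis on connecting maps, $\delta$ is the identity on $W_{N-2}$, hence surjective, so $\Ext^1_{\calC}(\boldsymbol 1,V_{N-1})$ embeds into $W_{N-2}\otimes \Ext^1_{\calC}(\boldsymbol 1,\boldsymbol 1)$, which is finite-dimensional by hypothesis and induction.

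Next I would use the canonical isomorphism
\[
\Ext^1_{\calC}(W_{N-1}\otimes\boldsymbol 1,V_{N-1})\cong W_{N-1}^\vee\otimes \Ext^1_{\calC}(\boldsymbol 1,V_{N-1})=\End_{\calK}(W_{N-1}),
\]
valid because $W_{N-1}\otimes\boldsymbol 1$ is a finite direct sum of copies of $\boldsymbol 1$. Define $V_N$ to be the extension
\[
0\to V_{N-1}\to V_N\to W_{N-1}\otimes\boldsymbol 1\to 0
\]
corresponding under this isomorphism to the identity endomorphism $\mathrm{id}_{W_{N-1}}$. The inclusion $V_{N-1}\hookrightarrow V_N$ supplies the required inductive-system structure.

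Finally I would verify the claim about the connecting homomorphism. Under the natural isomorphisms $\Hom(\boldsymbol 1, W_{N-1}\otimes\boldsymbol 1)\cong W_{N-1}$, the connecting map $\delta\colon W_{N-1}\to \Ext^1_{\calC}(\boldsymbol 1,V_{N-1})=W_{N-1}$ sends an element $w\in W_{N-1}$, viewed as a morphism $\boldsymbol 1\to W_{N-1}\otimes \boldsymbol 1$, to the pullback of the extension class $[V_N]$ along $w$. This pullback operation coincides, under the identification $\Ext^1_{\calC}(W_{N-1}\otimes\boldsymbol 1,V_{N-1})\cong \End_{\calK}(W_{N-1})$, with evaluation at $w$; since the class of $V_N$ was chosen to correspond to $\mathrm{id}_{W_{N-1}}$, evaluation at $w$ returns $w$. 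Hence $\delta=\mathrm{id}_{W_{N-1}}$, completing the inductive step.

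The main (mild) obstacle is the last step: one must match two a priori different descriptions of the connecting homomorphism, namely the Yoneda/pullback description and the one obtained through the identification $\Ext^1(W\otimes\boldsymbol 1,V)\cong W^\vee\otimes\Ext^1(\boldsymbol 1,V)$. Both are instances of the bifunctoriality of $\Ext^1$, so this boils down to a standard diagram chase which I would carry out by splitting $W_{N-1}\otimes\boldsymbol 1$ as a direct sum along any basis of $W_{N-1}$ and applying the naturality of the long exact sequence summand by summand.
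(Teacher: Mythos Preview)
Your proposal is correct and follows essentially the same approach as the paper: use the canonical identification $\Ext^1_{\calC}(W_{N-1}\otimes\boldsymbol 1,V_{N-1})\cong\End_{\calK}(W_{N-1})$ and take $V_N$ to represent the identity, with finite-dimensionality of $W_{N-1}$ established by induction. Your write-up is in fact more detailed than the paper's, which leaves both the finite-dimensionality induction and the connecting-map verification to the reader.
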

\begin{proof}
	Note that we have a canonical isomorphism
	\[
	\Ext^1_{\calC}(\Ext^1_{\calC}(\boldsymbol 1,V_{N-1})\otimes\boldsymbol 1,V_{N-1})\cong \End(\Ext^1_{\calC}(\boldsymbol 1,V_{N-1})).
	\]
	Then, we take $V_N$ to be a representative of the extension class corresponding to the identity map. Note that, by induction on $N$, we can show that $\Ext^1_{\calC}(\boldsymbol 1,V_N)$ is
 finite-dimensional for all $N$. Therefore, such a $V_N$ exists in $\calC$ for all $N$.
\end{proof}
In the rest of this appendix, we suppose that the assumption in Proposition~\ref{propa1} holds.
By construction, any subextension of (\ref{eqa1}) does not split.
Indeed, if \[
0\to V'\to V\to \boldsymbol 1\to 0
\]
is such a split extension such that $V'\subset V_{N-1}$,
then pushout of this extension by the inclusion $V'\subset V_{N-1}$, which is a subobject of $V_{N-1}$,
would also split, contradicting the property of (\ref{eqa1}).
Thus, we have the following lemma.
\begin{lemma}
	\label{lema1}Let $N$ be a positive integer and let $V$ be a subobject of $V_{N}$.
	If the unipotency of $V$ is less than $N$, then $V$ is a subobject of $V_{N-1}$.
\end{lemma}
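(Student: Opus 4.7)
The plan is to establish the stronger identification $V_k=\mathrm{soc}^k(V_N)$ for every $0\leq k\leq N$, where $\mathrm{soc}^k(V_N)$ denotes the $k$-th iterated socle in the unipotent Tannakian category $\calC$ (with $\mathrm{soc}(M)$ defined as the maximal trivial subobject of $M$, canonically $\Hom_\calC(\boldsymbol 1,M)\otimes \boldsymbol 1$). Granting this identification, the lemma will follow from the standard categorical fact that a subobject of $V_N$ has Loewy length (which for unipotent objects coincides with the unipotency) at most $k$ if and only if it is contained in $\mathrm{soc}^k(V_N)$; applying this with $k=N-1$ yields the conclusion.

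The key technical input I would verify is the vanishing of the pushforward
\[
\iota_*\colon \Ext^1_\calC(\boldsymbol 1,V_k)\to \Ext^1_\calC(\boldsymbol 1,V_{k+1})
\]
induced by $\iota\colon V_k\hookrightarrow V_{k+1}$. Because the connecting map attached to (\ref{eqa1}) is the identity, every class $\xi\in\Ext^1_\calC(\boldsymbol 1,V_k)$ is realized by the pullback extension $E:=V_{k+1}\times_{V_{k+1}/V_k}\boldsymbol 1$, which is already a subobject of $V_{k+1}$. Hence the pushout $E':=V_{k+1}\sqcup_{V_k}E$ carries a retraction onto $V_{k+1}$, defined by $\mathrm{id}_{V_{k+1}}$ on the first factor and by the inclusion $E\hookrightarrow V_{k+1}$ on the second (these agree on the glued copy of $V_k$), so $E'\cong V_{k+1}\oplus\boldsymbol 1$ splits, i.e.\ $\iota_*(\xi)=0$.

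From this vanishing I would then run a routine long-exact-sequence computation. The assertion that each connecting map in (\ref{eqa1}) is the identity forces $\Hom_\calC(\boldsymbol 1,V_j)=\calK$ for every $j\geq 1$, and combining this with the vanishing above yields the chain of isomorphisms
\[
\Hom_\calC(\boldsymbol 1,V_N/V_{k-1})\cong \Ext^1_\calC(\boldsymbol 1,V_{k-1})\cong \Hom_\calC(\boldsymbol 1,V_k/V_{k-1}),
\]
whose composite is induced by the inclusion $V_k/V_{k-1}\hookrightarrow V_N/V_{k-1}$. This identifies the maximal trivial subobject of $V_N/V_{k-1}$ with $V_k/V_{k-1}$; iterating in $k$ gives $\mathrm{soc}^k(V_N)=V_k$.

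The concluding categorical fact follows from an induction on $k$ whose only nontrivial ingredient is the equality $\mathrm{soc}(V)=V\cap \mathrm{soc}(V_N)$, which allows the embedding $V/\mathrm{soc}(V)\hookrightarrow V_N/\mathrm{soc}(V_N)$ and an iteration modulo socles. The main obstacle will be the pushout verification that $\iota_*$ vanishes; once that is in place, everything else is an exercise with the long exact Ext sequence.
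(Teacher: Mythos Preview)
Your argument is correct, and it is essentially a fleshed-out version of the paper's one-line proof. The paper establishes only that no subextension of (\ref{eqa1}) with quotient $\boldsymbol 1$ can split (by the same pushout/retraction trick you use to prove $\iota_*=0$) and then writes ``Thus, we have the following lemma''. Your computation $V_k=\mathrm{soc}^k(V_N)$ is precisely what makes that ``Thus'' rigorous: the paper's non-splitting statement is equivalent to the injectivity of the connecting map and the vanishing of $\iota_*$, but the passage from this to the inclusion $V\subset V_{N-1}$ still requires the socle identification and the Loewy-length characterization that you supply. So the core idea is identical; what you add is the bookkeeping (long exact sequences, $\mathrm{soc}(V)=V\cap\mathrm{soc}(M)$, unipotency $=$ Loewy length) that the paper leaves implicit. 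Your version is longer but more self-contained; the paper's is terse to the point of being a sketch.
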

\begin{lemma}
	\label{lema2}Let $V$ be an object of $\calC^{\leq N}$, and let
	\[
	0\to V'\to V\to \boldsymbol 1^{\oplus r}\to 0
	\]
	be an exact sequence with $V'\in \Obj(\calC^{\leq N-1})$.
	Then, the natural homomorphism
	\[
	\Hom_{\calC}(V,V_N)\to \Hom_{\calC}(V',V_N)
	\]
	is surjective.
\end{lemma}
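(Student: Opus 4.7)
The plan is to show, for any morphism $f \colon V' \to V_N$, that $f$ extends to a morphism $\widetilde f \colon V \to V_N$. Applying $\Hom_\calC(-, V_N)$ to the given short exact sequence yields
\[
\Hom_\calC(V, V_N) \to \Hom_\calC(V', V_N) \xrightarrow{\partial} \Ext^1_\calC(\boldsymbol 1^{\oplus r}, V_N),
\]
so it suffices to prove that $\partial(f) = 0$ for every $f$; equivalently, that the pushout of the given exact sequence along $f$ splits as an extension of $\boldsymbol 1^{\oplus r}$ by $V_N$.

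First I would show that $f$ factors through $V_{N-1}$. Since $V' \in \Obj(\calC^{\leq N-1})$, a unipotent filtration of $V'$ of length $\leq N-1$ induces, by taking images, a unipotent filtration of $f(V') \subset V_N$ of the same length. Hence the unipotency of $f(V')$ is strictly less than $N$, and Lemma~\ref{lema1} gives $f(V') \subset V_{N-1}$. Thus $f$ factors as $V' \xrightarrow{f_0} V_{N-1} \xrightarrow{\iota} V_N$, where $\iota$ denotes the inclusion. By the naturality of the connecting homomorphism, $\partial(f) = \iota_* \partial(f_0)$, where
\[
\iota_* \colon \Ext^1_\calC(\boldsymbol 1^{\oplus r}, V_{N-1}) \to \Ext^1_\calC(\boldsymbol 1^{\oplus r}, V_N)
\]
is induced by $\iota$. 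It thus suffices to show that $\iota_* = 0$.

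To this end, apply $\Hom_\calC(\boldsymbol 1^{\oplus r}, -)$ to the defining sequence (\ref{eqa1}) of $V_N$ to obtain the long exact sequence
\[
\Hom_\calC(\boldsymbol 1^{\oplus r}, \Ext^1_\calC(\boldsymbol 1, V_{N-1}) \otimes \boldsymbol 1) \xrightarrow{\delta} \Ext^1_\calC(\boldsymbol 1^{\oplus r}, V_{N-1}) \xrightarrow{\iota_*} \Ext^1_\calC(\boldsymbol 1^{\oplus r}, V_N).
\]
Under the canonical identifications $\Hom_\calC(\boldsymbol 1^{\oplus r}, M \otimes \boldsymbol 1) \cong M^{\oplus r}$ for any $\calK$-vector space $M$, together with $\Ext^1_\calC(\boldsymbol 1^{\oplus r}, V_{N-1}) \cong \Ext^1_\calC(\boldsymbol 1, V_{N-1})^{\oplus r}$, the connecting map $\delta$ corresponds to the identity on $\Ext^1_\calC(\boldsymbol 1, V_{N-1})^{\oplus r}$. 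Indeed, (\ref{eqa1}) was constructed precisely so that the associated connecting homomorphism in the case $r = 1$ is the identity on $\Ext^1_\calC(\boldsymbol 1, V_{N-1})$, and the general case follows by decomposing $\boldsymbol 1^{\oplus r}$ as a direct sum of copies of $\boldsymbol 1$. In particular $\delta$ is surjective, and exactness forces $\iota_* = 0$, which completes the argument.

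The main technical point is the identification of $\delta$ with the identity map; this is essentially a restatement of the defining property of $V_N$ in Proposition~\ref{propa1}, but requires some care unwinding the canonical isomorphisms and the naturality of the connecting map. Once this is in place, the rest of the proof is purely formal homological algebra.
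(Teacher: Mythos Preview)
Your proof is correct and follows essentially the same approach as the paper: both factor $f$ through $V_{N-1}$ via Lemma~\ref{lema1} and then invoke the defining property of the extension~(\ref{eqa1}). The only difference is cosmetic---the paper constructs the lift explicitly by mapping the pushout $\widetilde V$ of the given sequence along $f$ into $V_N$, whereas you phrase the same step as the vanishing of $\iota_*\colon \Ext^1_{\calC}(\boldsymbol 1^{\oplus r},V_{N-1})\to \Ext^1_{\calC}(\boldsymbol 1^{\oplus r},V_N)$.
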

\begin{proof}Let $f\colon V'\to V_N$ be a morphism.
	Then, by Lemma~\ref{lema1}, the image of $f$ is contained in $V_{N-1}$.
	Let
	\[
	0\to V_{N-1}\to \widetilde V\to \boldsymbol 1^{\oplus r}\to 0
	\]
	be the pushout of the sequence in the lemma by $f$.
	Each $i$-th component $\boldsymbol 1\to \boldsymbol 1^{\oplus r}$ defines an element $s_i$ of $\Ext^1_{\calC}(\boldsymbol 1,V_{N-1})$.
	By the definition of (\ref{eqa1}), there exists the dotted allow
	\[
	\xymatrix{
		0\ar[r]& V_{N-1}\ar[r]\ar[d]^{\mathrm{id}}& \widetilde V\ar[r]\ar@{-->}[d]& \boldsymbol 1^{\oplus r}\ar[r]\ar[d]^{\sum s_i}& 0\\
		0\ar[r]&V_{N-1}\ar[r]&V_N\ar[r]&\Ext^1_{\calC}(\boldsymbol 1,V_{N-1})\otimes\boldsymbol 1\ar[r]&0,
	}
	\]
	which makes the diagram commute.
	Then, the composition of $V\to \widetilde V$ with this dotted arrow defines an extension of $f$ to $V_N$.
	This completes the proof of the lemma.
\end{proof}
\begin{proposition}
	\label{propa2}Let $\calC^{\op}$ denote the opposite category of $\calC$. Then, the functor
	\[
	\Hom_{\calC}(-,V_\infty)\colon \calC^{\op}\to \Vec_{\calK};\quad V\mapsto \varinjlim_N\Hom_{\calC}(V,V_N)
	\]
	is a $\calK$-linear exact functor.
\end{proposition}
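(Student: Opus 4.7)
The plan is to verify three properties of the functor $V\mapsto \varinjlim_N\Hom_{\calC}(V,V_N)$ in turn. First, $\calK$-linearity is immediate from the $\calK$-linearity of each $\Hom_{\calC}(-,V_N)$ together with the $\calK$-linear structure on filtered colimits in $\Vec_{\calK}$. Second, contravariant left exactness is automatic: for each $N$, the functor $\Hom_{\calC}(-,V_N)$ is left exact, and filtered colimits of left-exact sequences of $\calK$-vector spaces remain left exact.

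The real content is contravariant right exactness, namely the surjectivity of
\[
\varinjlim_N\Hom_{\calC}(V,V_N)\to \varinjlim_N\Hom_{\calC}(V',V_N)
\]
induced by any monomorphism $V'\hookrightarrow V$ in $\calC$. I would reformulate this as an extension problem: given any short exact sequence $0\to V'\to V\to V''\to 0$ in $\calC$ and any morphism $f\colon V'\to V_N$, produce an integer $M\geq N$ and a morphism $\tilde f\colon V\to V_M$ whose restriction to $V'$ equals $f$ followed by the natural map $V_N\to V_M$. My strategy is to prove this by induction on the unipotency $k$ of $V''$, in the strengthened form allowing an arbitrary source.

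The base case $k=1$, where $V''\cong \boldsymbol 1^{\oplus r}$, is the heart of the argument and a direct generalization of Lemma~\ref{lema2}. Here I would form the pushout of $0\to V'\to V\to \boldsymbol 1^{\oplus r}\to 0$ along $f$, obtaining a short exact sequence $0\to V_N\to \widetilde V\to \boldsymbol 1^{\oplus r}\to 0$ (pushouts of monomorphisms remain monomorphisms in any abelian category). This extension is classified by an $r$-tuple in $\Ext^1_{\calC}(\boldsymbol 1,V_N)^{\oplus r}$, equivalently by a morphism $\sigma\colon \boldsymbol 1^{\oplus r}\to \Ext^1_{\calC}(\boldsymbol 1,V_N)\otimes\boldsymbol 1$. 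The universal property of $V_{N+1}$ from Proposition~\ref{propa1} --- namely, that the connecting map on $\Ext^1_{\calC}(\boldsymbol 1,V_N)$ is the identity --- then guarantees that the pullback of the defining extension (\ref{eqa1}) along $\sigma$ is isomorphic to $\widetilde V$, furnishing a canonical morphism $\widetilde V\to V_{N+1}$ that restricts to the identity on $V_N$. Composing with $V\to \widetilde V$ produces the required $\tilde f\colon V\to V_{N+1}$, so $M=N+1$ suffices.

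For the inductive step with $k\geq 2$, I would pick a subobject $W'\subset V''$ of unipotency $\leq k-1$ such that $V''/W'\cong \boldsymbol 1^{\oplus r'}$ (available by the definition of unipotency), form the preimage $V^{(1)}\subset V$ of $W'$, yielding short exact sequences $0\to V'\to V^{(1)}\to W'\to 0$ and $0\to V^{(1)}\to V\to \boldsymbol 1^{\oplus r'}\to 0$, apply the inductive hypothesis to the first to lift $f$ to $V^{(1)}\to V_{N_1}$, and then invoke the base case on the second to obtain the desired $\tilde f\colon V\to V_{N_1+1}$. The main obstacle is precisely the base case: Lemma~\ref{lema2} as stated only produces extensions with fixed target $V_N$ under the assumption that the source has unipotency $\leq N-1$, and the crux is that the universal property of (\ref{eqa1}) is robust enough to allow arbitrary sources, at the cost of raising the target from $V_N$ to $V_{N+1}$ --- which is harmless after passing to the colimit.
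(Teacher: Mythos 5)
Your proof is correct, but it is organized differently from the paper's. The paper fixes the target layer: using Lemma~\ref{lema1} it reduces the surjectivity of $\Hom_{\calC}(V,V_\infty)\to\Hom_{\calC}(V',V_\infty)$ to the surjectivity of $\Hom_{\calC}(V,V_N)\to\Hom_{\calC}(V',V_N)$ with $N$ the unipotency of the ambient object $V$, and then argues by induction on $N$: it chooses $V_0\subset V$ in $\calC^{\leq N-1}$ with $V/V_0$ a sum of copies of $\boldsymbol 1$, sets $V_0'=V'\cap V_0$, obtains two short exact rows of $\Hom$-groups (right exactness coming from Lemma~\ref{lema2}), and concludes by the snake lemma from the outer vertical surjections supplied by the induction hypothesis. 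You instead induct on the unipotency of the cokernel $V''=V/V'$ and let the target index grow: your base case is in effect a re-proof of Lemma~\ref{lema2} in a strengthened form with no hypothesis on the source, obtained by pushing out along $f\colon V'\to V_N$ and mapping the pushout into $V_{N+1}$ via the defining property of (\ref{eqa1}), whereas the paper keeps the target at $V_N$ by first using Lemma~\ref{lema1} to push the image of $f$ down into $V_{N-1}$. Both arguments hinge on the same mechanism (pushout along $f$ plus the identity connecting map of (\ref{eqa1})), but your dévissage avoids the two-row diagram chase, avoids Lemma~\ref{lema1} altogether, and sidesteps the fact, implicit in the paper's bottom row, that a subobject of $\boldsymbol 1^{\oplus r}$ is again trivial. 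What you give up is only the fixed-layer formulation: your statement is about the colimit, and to recover the layerwise surjectivity of $\Hom_{\calC}(V,V_N)\to\Hom_{\calC}(V',V_N)$ for $V\in\calC^{\leq N}$ (the form used in the representability theorem that follows) one must still invoke Lemma~\ref{lema1} to see that the colimit is attained at level $N$; for Proposition~\ref{propa2} itself the colimit statement is exactly what is asserted, so your argument is complete as written.
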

\begin{proof}
	Since this functor is clearly left exact, it suffices to show the surjectivity of the map
	$\Hom_{\calC}(V,V_\infty)\to \Hom_{\calC}(V',V_\infty)$ for any injection $V'\intomap V$.
	
	Let $N$ be the unipotency of $V$.
	Then, according to Lemma~\ref{lema1}, it suffices to show that the map
	$\Hom_{\calC}(V,V_N)\to \Hom_{\calC}(V',V_N)$ is surjective.
	We prove this surjectivity by the induction on $N$.
	
	Suppose that the assertion holds for all objects in $\calC^{\leq N-1}$.
	Let $V_0$ be a subobject of $V$ such that $V_0\in\Obj(\calC^{\leq N-1})$ and that $V/V_0$ is isomorphic
	to a direct sum of copies of $\boldsymbol 1$.
	Let $V_0'$ denote the pull-back of $V_0$ to $V'$.
	Then, by Lemma~\ref{lema2}, we obtain the commutative diagram
	\[
	\xymatrix{
		0\ar[r]&\Hom_{\calC}(V/V_0,V_N)\ar[r]\ar[d]&\Hom_{\calC}(V,V_N)\ar[r]\ar[d]&\Hom_{\calC}(V_0,V_N)\ar[r]\ar[d]&0\\
		0\ar[r]&\Hom_{\calC}(V'/V'_0,V_N)\ar[r]&\Hom_{\calC}(V',V_N)\ar[r]&\Hom_{\calC}(V'_0,V_N)\ar[r]&0
	}
	\]
	with exact rows.
	Then, the left and the right vertical homomorphisms are surjective by the induction hypothesis.
	Therefore, the middle vertical map is also surjective by the snake lemma. This completes the proof.
\end{proof}

Let \[
\omega\colon \calC\to \Vec_{\calK}
\]
be a fiber functor on $\calC$. An $\omega$-comarked object means a pair
$(V,s)$ where $V$ is an object of $\calC$ and $s$ is a $\calK$-linear map
$\omega(V)\to \calK$. Clearly, $s$ can be regarded as an element of $\omega(V^\vee)$, where $V^\vee$ is
the dual object of $V$.
Let $\{V_N\}_N$ be the same as in the proposition above.
Then, we can easily extend this system to an inductive system $\{(V_N,s_N)\}_{N\geq 1}$ of
$\omega$-comarked objects such that $s_1\colon \omega(\boldsymbol 1)\cong \calK\to \calK$
is the identity map.
\begin{theorem}For any object $V$ of $\calC^{\leq N}$, the natural $\calK$-linear map
	\[
	\Hom_{\calC}(V,V_N)\to \Hom_{\calK}(\omega(V),\calK);\quad f\mapsto s_N\circ \omega(f)
	\]
	is bijective.
\end{theorem}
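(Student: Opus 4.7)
The plan is to prove the theorem by induction on $N$, reducing it to Lemma~\ref{lema1} and Lemma~\ref{lema2} combined with the five lemma.

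For the base case $N=1$, I would observe that any $V\in\calC^{\leq 1}$ is a direct sum $\boldsymbol 1^{\oplus r}$; both $\Hom_{\calC}(\boldsymbol 1^{\oplus r},V_1)$ and $\omega(\boldsymbol 1^{\oplus r})^{\vee}$ identify canonically with $\calK^{r}$ (using $\End(\boldsymbol 1)=\calK$ and $\omega(\boldsymbol 1)=\calK$), and the map in question becomes the identity since $s_1=\mathrm{id}_{\calK}$.

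For the inductive step, given $V\in\calC^{\leq N}$, I would choose a short exact sequence
\begin{equation*}
0\to V'\to V\to \boldsymbol 1^{\oplus r}\to 0
\end{equation*}
with $V'\in\Obj(\calC^{\leq N-1})$, and apply both $\Hom_{\calC}(-,V_N)$ and $\omega(-)^{\vee}$ to obtain a commutative diagram
\begin{equation*}
\xymatrix@C=0.7em{
0\ar[r] & \Hom_{\calC}(\boldsymbol 1^{\oplus r},V_N)\ar[r]\ar[d] & \Hom_{\calC}(V,V_N)\ar[r]\ar[d] & \Hom_{\calC}(V',V_N)\ar[r]\ar[d] & 0\\
0\ar[r] & \omega(\boldsymbol 1^{\oplus r})^{\vee}\ar[r] & \omega(V)^{\vee}\ar[r] & \omega(V')^{\vee}\ar[r] & 0
}
\end{equation*}
The top row is exact by Lemma~\ref{lema2} (combined with left-exactness of $\Hom_{\calC}(-,V_N)$), and the bottom row is exact because $\omega$ is exact and $\calK$-dualization preserves short exact sequences of finite-dimensional vector spaces. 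By iterating Lemma~\ref{lema1}, every morphism out of $\boldsymbol 1^{\oplus r}$ (unipotency $\leq 1$) into $V_N$ factors through $V_1=\boldsymbol 1$, so the left vertical arrow reduces to the base case; similarly, every morphism out of $V'$ (unipotency $\leq N-1$) factors through $V_{N-1}$, so the right vertical arrow reduces to the inductive hypothesis for $V'$ and $V_{N-1}$. The five lemma then forces the middle vertical arrow to be an isomorphism, completing the induction.

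The main delicate point will be the compatibility $s_N|_{\omega(V_{N-1})}=s_{N-1}$, which is implicit in the statement that $\{(V_N,s_N)\}_N$ forms an \emph{inductive system} of $\omega$-comarked objects; concretely, the inclusion $V_{N-1}\hookrightarrow V_N$ is a morphism of comarked objects. Without this compatibility, the right square of the diagram would not commute strictly, and the induction would not close. Granting this, the argument is essentially a diagram chase.
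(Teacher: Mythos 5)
Your proof is correct and follows essentially the same route as the paper: induction on $N$ using the short exact sequence $0\to V'\to V\to \boldsymbol 1^{\oplus r}\to 0$, exactness of $\Hom_{\calC}(-,V_N)$ on it (the paper cites Proposition~\ref{propa2}, which is exactly Lemma~\ref{lema2} plus left-exactness), reduction of the outer vertical arrows via Lemma~\ref{lema1} and the compatibility of the comarkings, and a five/snake lemma chase. Your explicit treatment of the base case and of the compatibility $s_N\circ\omega(\iota_{N-1,N})=s_{N-1}$ only spells out details the paper leaves implicit.
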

\begin{proof}We prove this assertion by induction on $N$.
	Suppose that the assertion holds for $N-1$.
	Let
	\[
	0\to V'\to V\to \boldsymbol 1^{\oplus r}\to 0
	\]
	be an exact sequence such that $V_1\in \Obj(\calC^{\leq N-1})$.
	According to Proposition~\ref{propa2}, we have the following commutative diagram
	\[
	\xymatrix{
		0\ar[r]& \Hom(\boldsymbol 1^{\oplus r},V_N)\ar[r]\ar[d]& \Hom(V,V_N)\ar[d]\ar[r]& \Hom(V',V_N)\ar[d]\ar[r]&0\\
		0\ar[r]&\omega^\vee(\boldsymbol 1^{\oplus r})\ar[r]&\omega^\vee(V)\ar[r]&\omega^\vee(V')\ar[r]&0
	}
	\]with exact rows.
	Then, the left and right vertical homomorphisms are isomorphisms
	by the induction hypothesis and Lemma~\ref{lema2}.
	Therefore, we obtain the conclusion of the theorem by the snake lemma.
\end{proof}
The following corollaries are direct consequences of the theorem above.
\begin{corollary} Define $\omega^\vee\colon \calC^{\op}\to \Vec_{\calK}$ to be $\Hom_{\Vec_{\calK}}(\omega(-),\calK)$,
	where $\calC^{\op}$ is the opposite category of $\calC$.
	Then, the $\omega$-comarked object $(V_N,s_N)$ represents the functor $\omega^\vee|_{\calC^{\leq N}}$, and the ind-$\omega$-comarked object $(V_\infty,s_\infty)=\varinjlim_N(V_N,s_N)$
	represents the functor $\omega^\vee$.
\end{corollary}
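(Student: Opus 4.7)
The plan is to deduce both representability statements directly from the theorem immediately above, by checking naturality and then passing to the inductive limit. I would organize the argument as follows.

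First, for the finite-level statement, fix $N$ and consider the map
\[
\Phi_V\colon \Hom_{\calC}(V,V_N)\to \omega^\vee(V);\quad f\mapsto s_N\circ \omega(f),\qquad V\in\Obj(\calC^{\leq N}).
\]
Naturality in $V$ is immediate: given any morphism $g\colon V'\to V$ in $\calC^{\leq N}$ and any $f\in\Hom_{\calC}(V,V_N)$, functoriality of $\omega$ gives $\Phi_{V'}(f\circ g)=s_N\circ\omega(f\circ g)=(s_N\circ\omega(f))\circ\omega(g)=\omega^\vee(g)(\Phi_V(f))$. Hence $\Phi$ is a natural transformation $\Hom_{\calC}(-,V_N)\to\omega^\vee|_{\calC^{\leq N}}$ of functors $(\calC^{\leq N})^{\op}\to\Vec_{\calK}$, and the preceding theorem asserts that each component $\Phi_V$ is bijective. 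By Yoneda, this is precisely the statement that $(V_N,s_N)$ represents $\omega^\vee|_{\calC^{\leq N}}$, with $s_N\in\omega^\vee(V_N)$ playing the role of the universal element (since $\Phi_{V_N}(\mathrm{id}_{V_N})=s_N$).

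Next, for the ind-level statement, let $V$ be an arbitrary object of $\calC$ and let $M$ be its unipotency, so that $V\in\Obj(\calC^{\leq M})\subseteq \Obj(\calC^{\leq N})$ for all $N\ge M$. By definition,
\[
\Hom_{\calC}(V,V_\infty)=\varinjlim_{N}\Hom_{\calC}(V,V_N),
\]
and by the finite-level result already established, each term with $N\geq M$ is identified with $\omega^\vee(V)$ via $\Phi_V^{(N)}:=s_N\circ\omega(-)$. The key point is that these identifications are compatible with the transition maps of the system: if $\iota_N\colon V_N\to V_{N+1}$ denotes the canonical inclusion coming from the inductive system $\{(V_N,s_N)\}$ of $\omega$-comarked objects, then by construction of the comarked structure $s_{N+1}\circ\omega(\iota_N)=s_N$, so that for any $f\in\Hom_{\calC}(V,V_N)$,
\[
\Phi_V^{(N+1)}(\iota_N\circ f)=s_{N+1}\circ\omega(\iota_N)\circ\omega(f)=s_N\circ\omega(f)=\Phi_V^{(N)}(f).
\]
Hence the directed system $(\Hom_{\calC}(V,V_N))_{N\ge M}$ is, via the $\Phi_V^{(N)}$, isomorphic to the constant system with value $\omega^\vee(V)$, and passing to the limit yields a natural bijection $\Hom_{\calC}(V,V_\infty)\isom \omega^\vee(V)$ sending $\mathrm{id}_{V_\infty}$ to $s_\infty$. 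By Yoneda again, $(V_\infty,s_\infty)$ represents $\omega^\vee$.

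The only delicate point in this plan is the compatibility $s_{N+1}\circ\omega(\iota_N)=s_N$, which is a condition on how the inductive system of markings was set up rather than a substantive difficulty; it is built in to the statement that $\{(V_N,s_N)\}_{N\ge 1}$ is an inductive system of $\omega$-comarked objects. Once this compatibility is recorded, both representability statements are formal consequences of the theorem, and no further analysis of the Tannakian structure is required.
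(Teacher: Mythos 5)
Your proposal is correct and matches the paper's (implicit) argument: the paper simply declares this corollary a direct consequence of the preceding theorem, and your write-up supplies exactly the expected details — naturality of $f\mapsto s_N\circ\omega(f)$ plus Yoneda for the finite level, and the compatibility $s_{N+1}\circ\omega(\iota_N)=s_N$ (which is indeed what "inductive system of $\omega$-comarked objects" encodes) to pass to the colimit. Nothing further is needed.
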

\begin{corollary}
	\label{cora1}The ind-$\omega$-comarked object $(V_\infty,s_\infty)$ is unique up to a unique isomorphism.
\end{corollary}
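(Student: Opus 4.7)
The plan is to deduce this from the representability established in the previous corollary by a Yoneda-style argument. Suppose $(V'_\infty, s'_\infty) = \varinjlim_N(V'_N, s'_N)$ is another ind-$\omega$-comarked object sharing the same universal property, i.e.\ such that $(V'_N, s'_N)$ represents $\omega^\vee|_{\calC^{\leq N}}$ for every $N$. The first step is to construct a morphism $\phi \colon V'_\infty \to V_\infty$ matching the markings: for each $N$, the element $s'_N \in \omega^\vee(V'_N)$ corresponds, via the universal property of $(V_N, s_N)$, to a unique morphism $\phi_N \colon V'_N \to V_N$ in $\calC$ with $s_N \circ \omega(\phi_N) = s'_N$. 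Applying the same uniqueness to the two composites $V'_{N-1} \intomap V'_N \xrightarrow{\phi_N} V_N$ and $V'_{N-1} \xrightarrow{\phi_{N-1}} V_{N-1} \intomap V_N$ (both of which pull $s_N$ back to $s'_{N-1}$) forces them to agree, so the $\phi_N$ assemble into the desired ind-morphism $\phi$. A symmetric construction, exchanging the roles of the two representing objects, yields a morphism $\psi \colon V_\infty \to V'_\infty$ with the analogous property.

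The second step is to show that $\phi$ and $\psi$ are mutually inverse and that $\phi$ is the only morphism with the stated property. The composite $\psi \circ \phi$, restricted to the level-$N$ piece $V'_N$, and the structure map $V'_N \intomap V'_\infty$ both send $s'_N$ to $s'_N$; the uniqueness part of the universal property of $(V'_N, s'_N)$ therefore forces them to coincide for every $N$, whence $\psi \circ \phi = \mathrm{id}_{V'_\infty}$. The same reasoning yields $\phi \circ \psi = \mathrm{id}_{V_\infty}$. Uniqueness of $\phi$ itself is once more immediate from the uniqueness of each $\phi_N$, since any marking-preserving ind-morphism $V'_\infty \to V_\infty$ must, on each level, agree with $\phi_N$.

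The only real content of the argument is the observation that level-wise representability in $\calC$ upgrades correctly to morphisms between the ind-objects, which in turn reduces to verifying compatibility of the $\phi_N$ with the transition maps $V'_N \intomap V'_{N+1}$ and $V_N \intomap V_{N+1}$. This compatibility is entirely forced by the uniqueness clause in the finite-level universal property, so there is no serious obstacle beyond routine diagram chasing; the corollary is a formal consequence of Yoneda's lemma applied to the functor $\omega^\vee$.
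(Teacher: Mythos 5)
Your Yoneda-style argument is correct and is exactly the intended one: the paper offers no separate proof, declaring the corollary a direct consequence of the representability theorem, and your level-wise construction of $\phi_N$, the compatibility check with the transition maps, and the mutual-inverse/uniqueness step are the standard way to spell that out. The only point worth being explicit about is that the competing object's layers $V'_N$ lie in $\calC^{\leq N}$ (so the uniqueness clause of the finite-level universal property applies to them), which your formulation already builds in.
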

\begin{corollary}
	\label{cora2}The functor $\Hom_{\calC}(-,V_\infty)$ is a fiber functor of $\calC^{\mathrm{op}}$.
\end{corollary}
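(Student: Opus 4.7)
The plan is to reduce the statement to the universality property just established. By the preceding corollary, there is a natural isomorphism $\Hom_\calC(-,V_\infty)\cong \omega^\vee$ of functors on $\calC^{\op}$, where $\omega^\vee$ is the composition of $\omega$ with $\calK$-linear duality on finite-dimensional vector spaces. Thus it suffices to verify that $\omega^\vee\colon \calC^{\op}\to \Vec_\calK$ satisfies the axioms of a fiber functor.

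I would pull each defining property across duality. The functor $\omega^\vee$ lands in $\Vecf_\calK$ and is $\calK$-linear because $\omega$ is. Exactness on $\calC^{\op}$ is precisely the content of Proposition~\ref{propa2} (equivalently, it follows from exactness of $\omega$ combined with the exactness of $\calK$-linear dualization on $\Vecf_\calK$). Faithfulness is inherited from that of $\omega$, since $(-)^\vee$ is an equivalence on $\Vecf_\calK$: if a morphism $f$ in $\calC$ satisfies $\omega(f)^\vee=0$ then $\omega(f)=0$, hence $f=0$.

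For tensor compatibility, I would combine the natural isomorphism $\omega(V\otimes W)\cong \omega(V)\otimes\omega(W)$ supplied by $\omega$ being a tensor functor with the canonical identification $(A\otimes B)^\vee\cong A^\vee\otimes B^\vee$ in $\Vecf_\calK$, valid precisely because the target of $\omega$ consists of finite-dimensional $\calK$-vector spaces. Applied to $V$ and $W$, this yields a natural isomorphism $\omega^\vee(V\otimes W)\cong \omega^\vee(V)\otimes\omega^\vee(W)$, and symmetry of the tensor product on $\calC$ ensures that this is compatible with the opposite tensor structure on $\calC^{\op}$.

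No substantive obstacle arises: all the real work has been done in Proposition~\ref{propa2} and the preceding corollary, and the remaining verification is formal. The only point that requires any care is tracking what the tensor structure on $\calC^{\op}$ is, which the symmetry of the monoidal structure on $\calC$ renders routine.
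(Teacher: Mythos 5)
Your proof is correct and is essentially the argument the paper intends: the paper states the corollary as a direct consequence of the theorem, i.e.\ via the representability isomorphism $\Hom_{\calC}(-,V_\infty)\cong\omega^\vee$, after which exactness, faithfulness, and tensor compatibility of $\omega^\vee$ follow formally from the corresponding properties of $\omega$ and finite-dimensional duality (with Proposition~\ref{propa2} giving exactness directly). No gap; the verification you spell out is exactly the routine part the paper leaves implicit.
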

\begin{definition}
	\label{dfna1}Let $\calC$ be a unipotent Tannakian category satisfying the condition of Proposition~\ref{propa1}. Then, $(V_\infty,s_\infty)$ is called the \emph{universal ind-$\omega$-comarked object} of $\calC$. Its \emph{$N$th layer} refers $(V_N,s_N)$.
	
	Dually, the pro-marked object $(V_\infty^\vee,s_\infty)=\varprojlim_N(V_N^\vee,s_N)$
	is called the \emph{universal pro-$\omega$-marked object} of $\calC$.
	Its \emph{$N$th layer} refers the marked object $(V_N^\vee,s_N)$ of $\calC$.
\end{definition}

\end{document}